\def\RR{{\mathbb {R}}}
\def\J{{\mathcal{J}}}
\def\ep{\varepsilon}
\def\di{\displaystyle}
\def\R{{\mathbb {R}}}
\def\K{{\mathcal{K}}}
\def\L{{\mathcal{L}}}
\def\H{{\mathcal{H}}^{N-1}}
\def\pint{\operatorname {--\!\!\!\!\!\int\!\!\!\!\!--}}
\def\wg{W^{1,G}(\Omega)}
\def\wg0{W_0^{1,G}(\Omega)}
\newtheorem{teo}{Theorem}[section]
\newtheorem{lema}{Lemma}[section]
\newtheorem{defi}{Definition}[section]
\newtheorem{prop}{Proposition}[section]
\newtheorem{corol}{Corollary}[section]
\theoremstyle{definition}
\newtheorem{remark}{Remark}[section]
\renewcommand{\theequation}{\arabic{section}.\arabic{equation}}
\begin{document}

\title[A Free boundary problem for the $p(x)$- Laplacian] {A Free boundary problem for the $p(x)$- Laplacian}

\author[J.Fern\'andez Bonder \& S. Mart\'{i}nez \& N. Wolanski]
{Juli\'an Fern\'andez Bonder, Sandra Mart\'{\i}nez and Noemi
Wolanski}

\address{Departamento  de Matem\'atica, FCEyN
\hfill\break\indent UBA (1428) Buenos Aires, Argentina.}
\email{{ jfbonder@dm.uba.ar \\ smartin@dm.uba.ar \\
wolanski@dm.uba.ar} \hfill\break\indent {\em Web-page:}
 {\tt http://mate.dm.uba.ar/$\sim$jfbonder}
\hfill\break\indent {\em Web-page:}
 {\tt http://mate.dm.uba.ar/$\sim$wolanski}
 }

\thanks{Supported by ANPCyT PICT 2006-290, UBA X078 UBA X117 and CONICET PIP 5478/1438. All three authors are members of CONICET}

\keywords{free boundaries, variable exponent spaces, minimization.
\\
\indent 2000 {\it Mathematics Subject Classification.} 35R35, 35B65,
35J20}

\begin{abstract}
We consider the  optimization problem of minimizing
$\int_{\Omega}|\nabla u|^{p(x)}+ \lambda \chi_{\{u>0\}}\, dx$ in
the class of functions $W^{1,p(\cdot)}(\Omega)$ with
$u-\varphi_0\in W_0^{1,p(\cdot)}(\Omega)$, for a given
$\varphi_0\geq 0$ and bounded. $W^{1,p(\cdot)}(\Omega)$ is the
class of weakly differentiable functions with $\int_\Omega |\nabla
u|^{p(x)}\,dx<\infty$. We prove that every solution $u$ is locally
Lipschitz continuous, that it is a  solution to a free boundary
problem and that the free boundary, $\Omega\cap\partial\{u>0\}$,
is a regular surface.
\end{abstract}

\maketitle

\section{Introduction}

In this paper we study a free boundary problem for the $p(x)-$Laplacian.
The $p(x)-$Laplacian, is defined as
\begin{equation}\label{px}
\Delta_{p(x)}u=\mbox{div}(|\nabla u(x)|^{p(x)-2}\nabla u).
\end{equation}
This operator extends the classical Laplacian ($p(x)\equiv 2$) and the so-called $p-$Laplacian ($p(x)\equiv p$ with $1<p<\infty$) and it has been recently used in image processing and in the modeling of electrorheological fluids.

For instance, Chen, Levin and Rao \cite{CLR} proposed the following model in image processing
$$
E(u)=\int_{\Omega}\frac{|\nabla u(x)|^{p(x)}}{p(x)}+|u(x)-I(x)|\, dx \to \mbox{min}
$$
where $p(x)$ is a function varying between $1$ and $2$. It is chosen $p(x)$
next to 1 where there is likely to be edges and next to 2 where it is
likely not to be edges. 

Observe that the Euler-Lagrange equation associated to $E$ is the $p(x)-$laplacian.

For the modeling of electrorheological fluids, see \cite{R}.

\medskip

On the other hand, a free boundary problem associated to the $p(x)-$Laplacian,
was studied in \cite{HK} namely, the obstacle problem.
In that paper, existence and H\"older continuity of minimizers was proved.
No further regularity was studied.

To our knowledge, no other free boundary problem associated to this operator
has been analyzed up to date.

This paper is devoted to the study of the so-called {\em Bernoulli free boundary problem}, that is
\begin{equation}\label{bernoulli-px}
\begin{cases}
\Delta_{p(x)}u = 0 & \mbox{in }\{u>0\}\\
u=0,\ |\nabla u| = \lambda^*(x) & \mbox{on }\partial\{u>0\}.
\end{cases}
\end{equation}
where $\lambda^*$ is a given function away from zero and infinity.

This free boundary problem, in the linear case $p(x)\equiv2$, was first studied by A. Beurling in \cite{B} for $N=2$.

Still in the linear setting and for $N\ge 2$, this problem was analyzed by H. Alt and L. Caffarelli in the seminal paper \cite{AC}. In that work, the authors prove existence of a weak solution by minimizing the functional
$$
u\mapsto \int_{\Omega} \frac{|\nabla u|^2}{2} + \frac{\big(\lambda^*(x)\big)^2}2\chi_{\{u>0\}}\, dx.
$$
Then, they prove local Lipschitz regularity of weak solutions and,
when $\lambda^*$ is $C^\alpha$, they prove the $C^{1,\alpha}$ regularity of the
free boundary up to some negligibly set of possible singularities.

Later, in \cite{ACF}, these results were extended to the quasilinear uniformly
elliptic case.

Problem \eqref{bernoulli-px} with $p(x)\equiv p$ was addressed in \cite{DP},
where the same approach was applied to obtain similar results in the $p-$Laplacian
case. In that paper, the authors had to deal with the problem of the degeneracy
or singularity of the underlying equation.

Recently, the method was further extended in \cite{MW1}, where
 this free boundary problem for operators with non-standard growth was treated in
 the setting of Orlicz spaces.

\medskip

The Bernoulli free boundary problem, appears in many different applications, such as limits of singular perturbation problems of interest in combustion theory
(see for instance, \cite{BCN, LW1, LW2}) fluid flow e.g. the problem of jets
(see for instance \cite{ACF2, ACF3}) and some shape optimization problems with a volume constrain (see for instance, \cite{AAC, ACS, FBMW1, FBRW, Le, M1, Te2}).

\medskip

In this work, in order to analyze the Bernoulli free boundary problem
\eqref{bernoulli-px}, we follow the same approach as in the previously
mentioned works and prove optimal regularity of solutions and $C^{1,\alpha}$ regularity
of their free boundaries.

So we consider the following  minimization problem: For
$\Omega$ a smooth bounded domain in $\R^N$ and $\varphi_0$ a
nonnegative function with $\varphi_0\in L^{\infty}(\Omega)$ and
$\int_\Omega |\nabla \varphi_0|^{p(x)}\,dx<\infty$, we consider
the problem of minimizing the functional,
\begin{equation}\label{problem}
\mathcal{J}(u)=\int_{\Omega}\frac{|\nabla u|^{p(x)}}{p(x)}+
\lambda(x)\chi_{\{u>0\}}\, dx
\end{equation}
in the class of functions
$$
\mathcal{K}=\Big\{v\in W^{1,p(x)}(\Omega)\colon v - \varphi_0\in W_0^{1,p(x)}(\Omega)\Big\}.
$$
For the definition of the variable exponent Sobolev spaces, see Appendix \ref{appA1}.

\bigskip

In order to state the main results of the paper, we need to introduce some notation and assumptions.

\subsection*{Assumptions on $p(x)$}
Throughout this work, we will assume that the function $p(x)$ verifies
\begin{equation}\label{pminmax}
1<p_{min}\le p(x)\le p_{max}<\infty,\qquad x\in\Omega
\end{equation}
When we are restricted to a ball $B_r$ we use $p_- = p_-(B_r)$ and $p_+ = p_+(B_r)$ to denote the infimum and the supremum of $p(x)$ over $B_r$.

We also assume that $p(x)$ is continuous up to the boundary and that it has a modulus of continuity $\omega:\R \to \R$, i.e. $|p(x)-p(y)|\leq \omega(|x-y|)$ if $|x-y|$ is small. At several stages it is necessary to assume that
$p$ is log-H\"older continuous. This is, $w(r)=C(\log\frac1r)^{-1}$.

The precise assumptions on the modulus of continuity $\omega$ will be clearly stated in each section.

For our main result we need  to assume further that $p(x)$ is Lipschitz continuous in $\Omega$. In that case, we denote by $L$ the Lipschitz constant of $p(x)$, namely, $\|\nabla p\|_{L^{\infty}(\Omega)}\leq L$

\subsection*{Assumptions on $\lambda(x)$}

In the firsts sections we will only need $\lambda(x)$ to be bounded away from zero and infinity. We denote $0< \lambda_1\le \lambda(x)\le \lambda_2<\infty$ for $x\in\Omega$.

We will assume in the last part that $\lambda(x)$ is H\"older continuous.

\subsection*{Main Results}
Our first result gives the existence of a minimizer and, under the assumption of Lipschitz continuity of $p(x)$ and that $p(x)\ge 2$, the Lipschitz regularity of minimizers.
\begin{teo}\label{lip}
We prove
\begin{itemize}
\item Assume that $p(x)$ is log-H\"older continuous. Then, there exists a  minimizer of $\J$ in $\K$. Any such minimizer $u$ is nonnegative, bounded and locally H\"older continuous.

\item Under the same assumptions, any minimizer $u$ is globally $p(x)-$subharmonic and
$$
\Delta_{p(x)}u = 0 \qquad \mbox{in } \{u>0\}.
$$

\item If $p\in C^{0,1}(\Omega)$ then, every minimizer is nondegenerate (see Corollary \ref{ucr}).

\item If moreover $p(x)\geq 2$ in $\Omega$, then $u$ belongs to $C^{0,1}_{loc}(\Omega)$
    .
\end{itemize}
\end{teo}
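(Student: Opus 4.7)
The direct method applies. Since $\varphi_0 \in \K$ and $\J \geq 0$, $\inf_\K \J$ is finite. For a minimizing sequence $u_n$, boundedness of $\int_\Omega |\nabla u_n|^{p(x)}/p(x)\,dx$ combined with $u_n - \varphi_0 \in W_0^{1,p(\cdot)}(\Omega)$ and the Poincar\'e inequality in $W_0^{1,p(\cdot)}$ (valid under log-H\"older $p$) yields boundedness in $W^{1,p(\cdot)}(\Omega)$; pass to a subsequence with $u_n \rightharpoonup u$ weakly and $u_n \to u$ a.e. Weak lower semicontinuity of the Dirichlet term follows from convexity of $\xi\mapsto|\xi|^{p(x)}/p(x)$, and the inclusion $\{u>0\}\subset\liminf_n\{u_n>0\}$ up to a null set gives $\int\lambda\chi_{\{u>0\}} \leq \liminf_n\int\lambda\chi_{\{u_n>0\}}$ by Fatou, so $u$ minimizes. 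For nonnegativity, $\varphi_0\geq 0$ makes $u^- \in W_0^{1,p(\cdot)}$, hence $u^+\in\K$, and $\J(u^+)\leq\J(u)$ because $|\nabla u^+|\leq|\nabla u|$ and $\{u^+>0\}=\{u>0\}$. For boundedness, pick $M\geq\|\varphi_0\|_\infty$; then $(u-M)^+\in W_0^{1,p(\cdot)}$, so $\min(u,M)\in\K$, and minimality together with $\J(\min(u,M))\leq\J(u)$ forces $u\leq M$.

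\textbf{Euler--Lagrange identity and H\"older continuity.} Take $\eta\in C_0^\infty(\Omega)$, $\eta\geq 0$, and $\ep>0$. The competitor $u-\ep\eta$ satisfies $\{u-\ep\eta>0\}\subset\{u>0\}$, so minimality gives
\[
0 \leq \int_\Omega \frac{|\nabla(u-\ep\eta)|^{p(x)}-|\nabla u|^{p(x)}}{p(x)}\,dx;
\]
dividing by $\ep$ and letting $\ep\to 0^+$ yields $\int |\nabla u|^{p(x)-2}\nabla u\cdot\nabla\eta\,dx \leq 0$, i.e.\ $\Delta_{p(x)}u\geq 0$ in $\mathcal{D}'(\Omega)$. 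For $\eta\in C_0^\infty(\{u>0\})$, the $\chi$-term is unchanged for $|\ep|$ small, so two-sided variations give $\Delta_{p(x)}u=0$ in $\{u>0\}$. For local H\"older continuity, observe that $u$ is a quasi-minimizer of the pure $p(x)$-Dirichlet integral: for $B_r\subset\subset\Omega$ and $v - u \in W_0^{1,p(\cdot)}(B_r)$,
\[
\int_{B_r}\frac{|\nabla u|^{p(x)}}{p(x)}\,dx \leq \int_{B_r}\frac{|\nabla v|^{p(x)}}{p(x)}\,dx + \lambda_2|B_r|.
\]
Under log-H\"older $p$, such quasi-minimizers are locally H\"older continuous by the De Giorgi--Moser theory for variable exponent energies, used already in the obstacle problem of \cite{HK}.

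\textbf{Lipschitz regularity --- the main obstacle.} I expect this to be the hard step, and would follow the pattern of \cite{AC, DP, MW1}, combining interior regularity of $p(x)$-harmonic functions with the nondegeneracy of Corollary \ref{ucr}. Fix $x_0$ with $u(x_0) > 0$ and set $d=\mathrm{dist}(x_0,\partial\{u>0\})$. Since $u$ is positive and $p(x)$-harmonic on $B_d(x_0)$, Harnack's inequality for the $p(x)$-Laplacian gives $u\geq c\,u(x_0)$ on $B_{d/2}(x_0)$. The goal is the universal estimate $u(x_0)\leq Cd$; once it is known, interior $C^{1,\alpha}$ estimates for $p(x)$-harmonic functions (valid for $p\in C^{0,1}$ with $p\geq 2$, the latter hypothesis keeping the operator degenerate but not singular) convert it into $|\nabla u(x_0)|\leq C u(x_0)/d \leq C'$, i.e.\ Lipschitz continuity. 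To prove the key estimate, suppose by contradiction that $u(x_0)/d$ is arbitrarily large. Build a $p(x)$-harmonic barrier $w$ on the annulus $B_{2d}(x_0)\setminus B_d(x_0)$ with $w = c\,u(x_0)$ on $\partial B_d(x_0)$ and $w = 0$ on $\partial B_{2d}(x_0)$; by comparison $u\geq w$ in the annulus, and a gradient bound $|\nabla w|\geq c'\,u(x_0)/d$ at the outer sphere forces $u$ to exceed its nondegeneracy-allowed growth at a free boundary point in $\overline{B_{2d}(x_0)}$, contradicting Corollary \ref{ucr}. The delicate technical point is the barrier construction: because the $p(x)$-Laplacian is not invariant under dilations when $p$ depends on $x$, the standard $p$-Laplace fundamental solution must be modified and the rescaled equation analysed on a ball where $p_+-p_-$ is small, and this is precisely where Lipschitz continuity of $p$ is used in full strength (it controls the perturbation when the exponent is frozen), while $p\geq 2$ is needed to keep the barrier and its gradient estimates on the non-singular side of the operator.
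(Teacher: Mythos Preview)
Your treatment of existence, nonnegativity, boundedness, subharmonicity, harmonicity in $\{u>0\}$, and local H\"older continuity is essentially correct and matches the paper's arguments (Theorem~\ref{existencia}, Lemmas~\ref{subsol} and~\ref{princ-max}, Theorem~\ref{cotaalpha}, Proposition~\ref{solucion}); for H\"older continuity the paper carries out a direct Morrey-type comparison with the $p(x)$-harmonic replacement rather than invoking quasi-minimizer theory, but your route is fine.

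The Lipschitz part, however, has a genuine gap. Your contradiction mechanism does not work: nondegeneracy (Corollary~\ref{ucr}) is a \emph{lower} bound $\sup_{B_r(y_0)}u\geq c r$ at free boundary points $y_0$; it places no upper bound on growth and therefore cannot be contradicted by showing that $u$, or a barrier beneath it, is large. In addition, the comparison $u\geq w$ you invoke in the annulus $B_{2d}(x_0)\setminus B_d(x_0)$ is unjustified: the annulus meets $\{u=0\}$, so $u$ is only $p(x)$-subharmonic there, and a subsolution that dominates a harmonic function on the boundary need not dominate it inside. You also mislocate the role of $p\geq 2$.

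The paper's route is quite different. First (Lemma~\ref{gamapromedio}), with $v$ the $p(x)$-harmonic replacement of $u$ in $B_r$ and $M=\frac1r\sup_{B_{3r/4}}u$, one shows
\[
\int_{B_r}|\nabla(u-v)|^{p(x)}\,dx\;\geq\; C\,|\{u=0\}\cap B_r|\,M^{(1-\ep)p_-},
\]
via the exponential barrier of Lemma~\ref{exp} and Harnack; this is where the Lipschitz hypothesis on $p$ is used (to make $\|\nabla p\|_\infty$ small after rescaling). Second (Lemma~\ref{dx}), minimality together with the inequality $\frac{|\nabla u|^{p}}{p}-\frac{|\nabla v|^{p}}{p}\geq C|\nabla(u-v)|^{p}$, valid precisely when $p\geq 2$, yields the reverse bound $\int_{B_r}|\nabla(u-v)|^{p(x)}\,dx\leq C\lambda_2|\{u=0\}\cap B_r|$. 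Combining forces $|\{u=0\}\cap B_r|=0$ whenever $M$ is large, so $u$ is $p(x)$-harmonic in the \emph{full} ball $B_r$. Third (Corollary~\ref{otrocota}), if $x_0\in\partial\{u>0\}$ had $\sup_{B_{3r/4}(x_0)}u>C_{\max}r$, then $u$ would be $p(x)$-harmonic in $B_r(x_0)$; but then $u\geq 0$, $u(x_0)=0$ and interior $C^{1,\alpha}$ regularity force $\nabla u(x_0)=0$ and $u\leq C\rho^{1+\alpha}$ in $B_\rho(x_0)$, which now \emph{does} contradict nondegeneracy. The Lipschitz bound (Theorem~\ref{Lip}) follows from this linear growth at free boundary points together with the interior gradient estimate of Lemma~\ref{gradient-estimate}. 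Thus $p\geq 2$ enters not through barrier regularity but through the strong monotonicity inequality in Lemma~\ref{dx}; the paper itself notes this is the only place the hypothesis is used.
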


Our second result states that Lipschitz, nondegenerate minimizers of \eqref{problem} are weak solutions to \eqref{bernoulli-px}.

\begin{teo}\label{solucion.debil}
Assume that $p(x)$ is H\"older continuous and that $\lambda(x)$ is continuous.

Let $u$ be a nondegenerate, locally Lipschitz continuous minimizer of \eqref{problem}. Then, $\{u>0\}$ has finite perimeter locally in $\Omega$ and $\H(\partial\{u>0\}\setminus \partial_{red}\{u>0\})=0$.

Moreover, for every $x_0\in\partial_{red}\{u>0\}$, (this is, for every $x_0$ where there is an exterior unit normal $\nu(x_0)$ to $\partial\{u>0\}$ in the measure theoretic sense), $u$ has
the following asymptotic development,
\begin{equation}\label{asim}
u(x)=\lambda^*(x_0)\langle x-x_0,\nu(x_0)\rangle^-+o(|x-x_0|)\quad \mbox{ as }
x\rightarrow x_0
\end{equation}
where $\lambda^*(x)=\Big(\frac{p(x)}{p(x)-1}\,\lambda(x)\Big)^{1/p(x)}$.

Finally, for every $\phi\in C_0^{\infty}(\Omega)$, there holds
$$
-\int_{\{u>0\}} |\nabla u|^{p(x)-2}\nabla u\nabla \phi\, dx =
\int_{\partial_{red}\{u>0\}} (\lambda^*(x))^{p(x)-1}\phi\, d\H.
$$
That is, $u$ is a weak solution to \eqref{bernoulli-px} in the sense of distributions.
\end{teo}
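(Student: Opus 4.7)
The plan is to first show that $\Delta_{p(x)}u$ is a Radon measure supported on the free boundary with $(N-1)$-dimensional growth, then perform a blow-up analysis at reduced boundary points, and finally pin down the constant in the asymptotic development by a first-order domain variation. The representation formula then comes from the divergence theorem plus the Radon--Nikodym derivative of the measure on $\partial_{red}\{u>0\}$.

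\textbf{Step 1 (measure representation and finite perimeter).} Since, by Theorem \ref{lip}, $u$ is globally $p(x)$-subharmonic and $p(x)$-harmonic in $\{u>0\}$, the distribution $\mu:=\Delta_{p(x)}u$ is a nonnegative Radon measure supported on $\partial\{u>0\}$. Using $|\nabla u|\le C$ (local Lipschitz continuity) together with a standard cut-off $\eta\in C_0^\infty(B_r(x_0))$, $\eta\equiv1$ on $B_{r/2}$, $|\nabla\eta|\le C/r$, gives the $(N-1)$-dimensional growth bound $\mu(B_{r/2}(x_0))\le C\,r^{N-1}$ for every $x_0\in\partial\{u>0\}$. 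Combined with nondegeneracy (positive density of $\{u>0\}$) and the Lipschitz bound (positive density of $\{u=0\}$ at every free boundary point, by standard comparison with $p(x)$-harmonic replacements), this yields locally finite perimeter of $\{u>0\}$ and, via De\,Giorgi's structure theorem applied with the two-sided density estimates, $\H(\partial\{u>0\}\setminus\partial_{red}\{u>0\})=0$.

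\textbf{Step 2 (blow-up at a reduced boundary point).} Fix $x_0\in\partial_{red}\{u>0\}$ with measure-theoretic normal $\nu=\nu(x_0)$, and set $u_k(x)=u(x_0+r_k x)/r_k$ for a sequence $r_k\downarrow0$. The uniform Lipschitz bound allows us to extract a subsequence converging locally uniformly to some $u_0$. Because $p_k(x):=p(x_0+r_k x)\to p_0:=p(x_0)$ uniformly on compacts (H\"older continuity of $p$), monotonicity of the vector field $\xi\mapsto|\xi|^{p_k(x)-2}\xi$ yields strong $L^q_{loc}$ convergence of $\nabla u_k\to\nabla u_0$ and identifies $u_0$ as $p_0$-harmonic in $\{u_0>0\}$. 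The reduced boundary hypothesis forces $\chi_{\{u_k>0\}}\to\chi_{\{x\cdot\nu<0\}}$ in $L^1_{loc}$, and the two-sided density estimates propagate to the limit, giving $\{u_0>0\}=\{x\cdot\nu<0\}$. Hence $u_0(x)=\alpha\,\langle x,\nu\rangle^-$ for some $\alpha\ge0$; nondegeneracy rules out $\alpha=0$.

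\textbf{Step 3 (identification of $\alpha$ by domain variation).} The Euler--Lagrange identity in the direction of an inner deformation $\Phi_t(x)=x+t\phi(x)$, $\phi\in C_0^1(\Omega,\R^N)$, reads
\begin{equation*}
\int_{\Omega}\Big[\,|\nabla u|^{p(x)-2}\,(\nabla u)^{t}\,D\phi\,\nabla u-\tfrac{|\nabla u|^{p(x)}}{p(x)}\,\mathrm{div}\,\phi-\lambda(x)\,\chi_{\{u>0\}}\,\mathrm{div}\,\phi\Big]dx=R(u,\phi),
\end{equation*}
where $R(u,\phi)$ collects the contributions from $\nabla p$ and $\nabla\lambda$ and is of lower order (it carries an explicit $|\phi|$ factor so it vanishes under rescaling). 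Applying this identity to $u_k$ on a fixed ball, using the strong convergence $\nabla u_k\to\nabla u_0$ and the uniform convergence of $p_k,\lambda_k:=\lambda(x_0+r_k\cdot)$, and integrating by parts on the half-space solution $u_0=\alpha\langle x,\nu\rangle^-$ gives the algebraic condition $\tfrac{p_0-1}{p_0}\,\alpha^{p_0}=\lambda(x_0)$, i.e.\ $\alpha=\lambda^*(x_0)$. Since every subsequential blow-up limit equals $\lambda^*(x_0)\langle x,\nu\rangle^-$, the full family $u_k$ converges, which translates into the asymptotic development \eqref{asim}.

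\textbf{Step 4 (weak formulation).} For $\phi\in C_0^\infty(\Omega)$ the definition of $\mu$ gives
\begin{equation*}
-\int_{\{u>0\}}|\nabla u|^{p(x)-2}\nabla u\cdot\nabla\phi\,dx=\int_{\Omega}\phi\,d\mu.
\end{equation*}
Since $\mu$ is concentrated on $\partial\{u>0\}$ and $\H(\partial\{u>0\}\setminus\partial_{red}\{u>0\})=0$, it suffices to compute the Radon--Nikodym density of $\mu$ with respect to $\H$ on $\partial_{red}\{u>0\}$. The asymptotic development from Step 3 (together with the blow-up convergence of $|\nabla u_k|^{p_k-2}\nabla u_k$) yields, at $\H$-a.e.\ reduced boundary point $x$, the density value $(\lambda^*(x))^{p(x)-1}$, producing the claimed identity.

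\textbf{Main obstacle.} The delicate point is Step 3: passing the first-variation identity rigorously to the blow-up limit. This requires strong $L^q_{loc}$ convergence of $\nabla u_k$ (not just weak), uniform control over the $x$-dependent exponents $p_k$ in the nonlinear term $|\xi|^{p_k(x)-2}\xi$, and careful treatment of the remainder $R(u,\phi)$ coming from $\nabla p(x)$. The H\"older (indeed Lipschitz) assumption on $p(x)$ is used precisely to make $R$ scale away and to allow monotonicity-based compactness for the variable-exponent $p$-Laplacian in the limit.
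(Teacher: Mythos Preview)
Your overall strategy is sound, but Step~1 contains a real gap. You establish only the upper bound $\mu(B_{r/2}(x_0))\le Cr^{N-1}$ and then assert that, combined with the two-sided density estimates, ``this yields locally finite perimeter of $\{u>0\}$.'' That implication fails: uniform positive density of both phases does \emph{not} by itself force $\{u>0\}$ to have locally finite perimeter (e.g.\ the subgraph of a continuous function of unbounded variation has density $\tfrac12$ at every boundary point but infinite perimeter). What is missing is the matching lower bound $\mu(B_r(x_0))\ge c\,r^{N-1}$ for balls centered on the free boundary; with both bounds, a Vitali covering argument gives $\H(D\cap\partial\{u>0\})<\infty$, from which finite perimeter and the representation $\mu=q_u\,\H\lfloor\partial\{u>0\}$ with $0<c\le q_u\le C$ follow (this is exactly the content of the paper's Theorem~\ref{rn-1} and Representation Theorem~\ref{repteo}). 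Without that representation your Step~4 also breaks, since you cannot yet speak of a Radon--Nikodym density of $\mu$ against $\H\lfloor\partial\{u>0\}$. The lower bound on $\mu$ is obtained from nondegeneracy via a test-function computation (as in \cite{AC,DP}); and separately, your one-line justification of the density of $\{u=0\}$ (``by standard comparison with $p(x)$-harmonic replacements'') skips the nontrivial compactness argument the paper actually carries out in Theorem~\ref{densprop}.

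Your Step~3, by contrast, takes a route genuinely different from the paper's. The paper first shows (Lemma~\ref{minblow}) that every blow-up limit $u_0$ is itself a \emph{minimizer} of the constant-exponent functional with $p\equiv p(x_0)$ and $\lambda\equiv\lambda(x_0)$, and then invokes the known identification for that case (Lemma~\ref{constant1}, citing \cite{MW1}) to read off the slope. You instead pass the inner-variation identity through the blow-up and evaluate it directly on the half-space profile $u_0=\alpha\langle x,\nu\rangle^-$, obtaining $\tfrac{p_0-1}{p_0}\alpha^{p_0}=\lambda(x_0)$. This is a legitimate alternative and is more self-contained, since it avoids reproving that $u_0$ is a minimizer; the paper's route is shorter once the constant-$p$ theory is already in hand. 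Both approaches need the same analytic inputs---a.e.\ (hence, by dominated convergence, $L^q_{loc}$) convergence of $\nabla u_k$ and uniform convergence of $p_k,\lambda_k$---so neither is essentially harder than the other.
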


Now, we arrive at the last result of the paper. Namely, the regularity of the free boundary $\partial\{u>0\}$ for Lipschitz minimizers of \eqref{problem}.

\begin{teo}\label{teo.main}
Let $p(x)$ be Lipschitz continuous, $\lambda(x)$ be H\"older continuous, and $u$ be a locally Lipschitz continuous minimizer of \eqref{problem}. Then, for $\H-$almost every point in the free boundary $\partial\{u>0\}$ there exists a neighborhood $V$ such that $V\cap\partial\{u>0\}$ is a $C^{1,\gamma}$ surface, for some $\gamma>0$.
\end{teo}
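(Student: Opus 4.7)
The plan is to adapt the flatness-implies-regularity strategy of Alt–Caffarelli \cite{AC} and its extension to the $p$-Laplacian by Danielli–Petrosyan \cite{DP}, combining (i) a blow-up analysis at reduced-boundary points to produce flatness, (ii) an improvement-of-flatness dichotomy that upgrades flatness to $C^{1,\gamma}$ regularity of the free boundary, and (iii) a freezing argument to handle the variable exponent $p(x)$. Lipschitz continuity of $p$ and H\"older continuity of $\lambda$ (hence of $\lambda^*$) are exactly the ingredients needed to make the freezing quantitative.

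First I would fix $x_0\in\partial_{red}\{u>0\}$, with measure-theoretic exterior normal $\nu(x_0)$, and consider the rescalings $u_\rho(x)=\rho^{-1}u(x_0+\rho x)$. From the asymptotic expansion \eqref{asim} together with the uniform Lipschitz bound and nondegeneracy of Theorem \ref{lip}, one obtains $u_\rho\to U_0:=\lambda^*(x_0)\langle x,\nu(x_0)\rangle^-$ uniformly on compact sets, and, by uniform positive density of $\{u_\rho>0\}$ and $\{u_\rho=0\}$ on either side of $\{\langle x,\nu(x_0)\rangle=0\}$, convergence of the free boundaries in Hausdorff distance to that hyperplane. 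In particular, for every $\varepsilon>0$ there is $\rho_0$ such that for $\rho<\rho_0$ the free boundary of $u_\rho$ is $\varepsilon$-flat in $B_1$ in the direction $\nu(x_0)$, meaning it lies in the strip $\{|\langle x,\nu(x_0)\rangle|<\varepsilon\}$ with the appropriate one-sided inclusions of the positivity and zero sets.

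The heart of the argument is then the improvement-of-flatness dichotomy: there exist $\varepsilon_0>0$, $\tau\in(0,1)$, $\gamma\in(0,1)$ and $C>0$ such that, whenever a minimizer has $\varepsilon$-flat free boundary in $B_1$ in direction $\nu$, with $\varepsilon<\varepsilon_0$ and with the oscillations of $p$ and $\lambda^*$ sufficiently small, the free boundary is $\tau^{1+\gamma}\varepsilon$-flat in $B_\tau$ in some new direction $\nu'$ with $|\nu-\nu'|\le C\varepsilon$. Iterating at dyadic scales $\tau^k$ gives geometric decay of the flatness and a Cauchy estimate on the normals, hence $\gamma$-H\"older continuity of $\nu$ and $C^{1,\gamma}$ regularity of the free boundary in a neighborhood of $x_0$. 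The dichotomy itself would be proved by compactness-contradiction: linearizations $v_k=(u_k-\lambda^*(x_k)\langle x,\nu_k\rangle^-)/\varepsilon_k$ of a hypothetical sequence of counterexamples subconverge, in the positive phase, to a $v_\infty$ solving a linear problem associated to the constant-exponent $p(x_0)$-Laplacian with an oblique boundary condition inherited from $|\nabla u|=\lambda^*(x)$; the H\"older regularity of $\lambda^*$ and the Lipschitz regularity of $p$ provide the quantitative error estimates that close the limit procedure, while the known regularity of the limit problem in the half-ball (available in the constant-$p$ setting of \cite{DP,ACF}) delivers the required improvement.

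The main obstacle is the non-scale-invariance of $\Delta_{p(x)}$. Under $u_\rho(x)=\rho^{-1}u(x_0+\rho x)$, the rescaled function satisfies $\mbox{div}(|\nabla u_\rho|^{p_\rho(x)-2}\nabla u_\rho)=0$ with the \emph{rescaled} exponent $p_\rho(x)=p(x_0+\rho x)$, so at every scale we work with a slightly different equation. The Lipschitz assumption $\|\nabla p\|_\infty\le L$ gives the crucial quantitative control $\|p_\rho-p(x_0)\|_{L^\infty(B_R)}\le LR\rho$, which is what forces the rescaled equations to converge, uniformly on compact sets, to the constant-exponent $p(x_0)$-Laplacian and allows the analysis of \cite{DP} to be invoked in the limit. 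The delicate technical work will be to establish the full set of auxiliary estimates (Harnack and gradient bounds in $\{u_\rho>0\}$, boundary Harnack, non-oscillation of $\nu_k$) with constants that are uniform in $\rho$, and to control the errors produced by freezing so that they do not accumulate through the dyadic iteration and the final H\"older exponent $\gamma$ is preserved.
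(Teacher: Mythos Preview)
Your overall architecture---blow up at a reduced boundary point to reach a flat configuration, then run an improvement-of-flatness iteration---matches the paper's, and the initial flatness is obtained exactly as you describe via Theorem~\ref{solucion.debil}. Where you diverge is in the mechanism of the improvement step. You propose a De~Silva-type linearization of the solution itself, $v_k=(u_k-\lambda^*(x_k)\langle x,\nu_k\rangle^-)/\varepsilon_k$, with the limit solving a linear oblique-derivative problem in a half-ball. The paper instead follows the original Alt--Caffarelli nonhomogeneous blow-up as in \cite{AC,DP}: it works in the three-parameter flatness class $F(\sigma_+,\sigma_-;\tau)$ (the third parameter encoding $|\nabla u|\le\lambda^*(0)(1+\tau)$), rescales the \emph{free boundary graph} anisotropically by $\sigma_k$ to obtain a limit function $f$ that is shown to be subharmonic (Lemma~\ref{9.1}), and then uses this to tilt the direction; the gradient parameter $\tau$ is improved separately by showing that $|\nabla u|$ is a subsolution of a uniformly elliptic divergence-form equation (Lemma~\ref{ecugradu}) and applying a weak Harnack argument. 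It is in this last step, and in passing to nondivergence form for the barrier constructions (Remark~\ref{nodiver}), that the Lipschitz hypothesis on $p$ enters decisively in the paper.

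Two caveats on your route. First, your appeal to ``the known regularity of the limit problem in the half-ball (available in the constant-$p$ setting of \cite{DP,ACF})'' is misplaced: neither \cite{DP} nor \cite{ACF} linearizes the solution; both use the AC nonhomogeneous blow-up of the free boundary. The limit problem you describe is a linear uniformly elliptic equation (since you linearize near a profile with $|\nabla|\equiv\lambda^*(x_0)>0$) with an oblique boundary condition, and its regularity must be supplied separately. Second, the compactness you need for your $v_k$ is not automatic: in the positive phase the $u_k$ satisfy degenerate equations with varying exponent, and you must produce uniform $C^{1,\alpha}$ estimates up to the (moving) free boundary in order to extract a limit with a meaningful boundary condition---this is where a boundary Harnack or partial-Harnack step is required, and it is nontrivial for the $p(x)$-Laplacian. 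Your plan is a legitimate alternative, potentially streamlined since it avoids the separate $\tau$-improvement, but it requires building that compactness and the limit-problem regularity from scratch rather than quoting \cite{DP}; the paper's choice buys a shorter path by reducing everything to modest modifications of \cite{DP}.
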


\subsection*{Technical comments}

We finish this introduction with some comments on the technical difficulties that we have encountered when dealing with the $p(x)-$Laplacian, highlighting the differences in the arguments with respect to the previous works on Bernoulli-type free boundary problems.

\begin{itemize}
\item As  mentioned in the Appendix \ref{appA1}, the log-H\"older continuity is a key ingredient in order to deal with variable exponent Sobolev spaces. For instance, up to date, this is the minimum requirement in order to have that $C^\infty$ be dense in $L^{p(\cdot)}$. See \cite{DHN}.

\item One fundamental tool in the analysis of this free boundary problem is the use of barriers. In order to construct barriers one has to look at the operator in non-divergence form. See \cite{ACF, DP, MW1}, etc. In the $p(x)-$Laplacian case, in order to write the equation in non-divergence form, one has to be able to compute the derivative of $p(x)$. Therefore, the assumption that $p(x)$ be Lipschitz continuous becomes natural. See Remark \ref{nodiver}.

\item Probably, the main technical difficulty that we have encountered is the fact that the class of $p(x)-$ harmonic functions is not invariant under the scaling $u(x) \mapsto u(tx)/k$ if $t\neq k$. In  \cite{DP, MW1} this invariance was used in a crucial way in the proof of the Lipschitz continuity of the solutions. See, for instance, the proof of Lemma 3.2 in \cite{DP}.

In order to overcome this difficulty we went back to the ideas in \cite{AC}, but we are left with the additional technical assumption that $p(x)\ge 2$ in order to get the Lipschitz continuity of the minimizers.

\item As for Harnack's inequality in the case of $p(x)-$harmonic functions, the inequality that holds is analogous to Harnack's inequality for the nonhomogeneous Laplace equation. Moreover, the constant in this inequality is not universal, but depends (in a nontrivial manner) on the $L^\infty$ norm of the solution. Nevertheless, the constant in Harnack's inequality remains invariant under homogenous scalings of a solution, see Remark \ref{harna2}.

\item We are not aware of the validity of the strong minimum principle for $p(x)-$harmonic functions (it does not come out of Harnack's inequality). This property was used at several stages in previous works. In our new arguments we use instead the nondegeneracy of minimizers (see Lemma \ref{prom1}), which is valid for any $p(x)>1$. With this property we can prove, for instance, Corollary \ref{otrocota} which is a crucial step to obtain the Lipschitz regularity of minimizers.

\item We believe that the hypothesis $p(x)\ge 2$ --that is needed in order to obtain the Lipschitz regularity of minimizers-- is purely technical. This assumption is only used in Lemma \ref{dx}. If one is able to prove this lemma for a general $p(x)$, this assumption can be eliminated.

\item There is another step where the hypothesis that $p(x)$ be Lipschitz is crucial.  Namely, in order to obtain the result on the regularity of the free boundary one needs a differential inequality for a function of the gradient. In this paper we  prove that if $u$ is $p(x)$-harmonic and if  $v=|\nabla u|$ is far from zero and infinity, then $v$ is a subsolution of an elliptic equation with principal part in divergence form (see Lemma \ref{ecugradu}). In order to prove this result we need to differentiate the equation, and therefore we need $p(x)$ to be Lipschitz.

\item As in \cite{AC}, the hypothesis  $\lambda(x)$  H\"{o}lder continuous is needed in the proof of the regularity of the free boundary in Section 8. Note that this is a natural assumption if one expects the $C^{1,\alpha}$ regularity of the free boundary to imply the $C^\alpha$ continuity of $\nabla u$  up to $\partial\{u>0\}$.
\end{itemize}

\medskip

\subsection*{Outline of the paper}

First, in Section 2, under the assumption of log-H\"older continuity of $p(x)$, by using standard variational arguments, we prove  the existence of a minimizer for $\J$ in the class $\K$. Then, we show that every minimizer is $p(x)-$subharmonic and bounded.

In Section 3, we analyze the regularity properties of minimizers and prove, under minimal assumptions on $p(x)$, that minimizers are H\"older continuous (Theorem \ref{cotaalpha}). As a consequence, we deduce that $u$ is $p(x)-$harmonic in $\{u>0\}$.

In Section 4, we further analyze the regularity of minimizers. This time, under the assumption that $p(x)$ is (locally) Lipschitz continuous, we first prove that $u$ is nondegenerate near a free boundary point (Corollary \ref{prom12}). Then, assuming further that $p(x)\ge2$, we prove that $u$ is locally Lipschitz continuous (Theorem \ref{Lip}). This completes the proof of Theorem \ref{lip}.

In Sections 5, 6 and 7 we assume that $u$ is a Lipschitz non-degenerate minimizer and that $p$ is locally H\"older continuous.

In Section 5, we begin the proof of Theorem \ref{solucion.debil} and show the positive density of $\{u>0\}$ and $\{u=0\}$ at  every free boundary point (Theorem \ref{densprop}).

In Section 6 we study the measure $\Lambda = \Delta_{p(x)}u$ and prove that it is absolutely continuous with respect to $\H\lfloor\partial\{u>0\}$. Then, we deduce that almost every point on the free boundary belongs to the reduced free boundary (Lemma \ref{redcasitodo}).

In Section 7, we finish the proof of Theorem \ref{solucion.debil} by proving the asymptotic development of $u$ near a free boundary point in the reduced boundary.

Finally, Section 8 is devoted to the proof of Theorem \ref{teo.main}.

We finish this paper with a couple of appendices with some previous and some new results about $p(x)-$harmonic and subharmonic functions, that can be of independent interest.



\section{The minimization problem}

In this section we look for minimizers of the  functional
$\mathcal{J}$.
 We begin by discussing the existence of extremals. Next, we prove
 that any minimizer is a subsolution to the equation $\mathcal L u=0$
 and finally, we  prove that $0\le u\le \mbox{sup\,}\varphi_0$.

\begin{teo}\label{existencia} Let $p\in C(\overline\Omega)$ and $0<\lambda_1\le \lambda(x)\le\lambda_2<\infty$.
If $\mathcal{J}(\varphi_0)<\infty$ there exists a minimizer
of $\mathcal{J}$.
\end{teo}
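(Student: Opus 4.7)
The plan is the classical direct method of the calculus of variations, carried out in the variable exponent setting. Since $\mathcal{J}\ge 0$ and $\mathcal{J}(\varphi_0)<\infty$, the infimum $m:=\inf_{\mathcal{K}}\mathcal{J}$ is finite, so I can pick a minimizing sequence $\{u_n\}\subset\mathcal{K}$ with $\mathcal{J}(u_n)\to m$. The goal is to extract a weak limit $u\in\mathcal{K}$ and verify lower semicontinuity of each piece of $\mathcal{J}$ along this sequence.

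For coercivity, discarding the nonnegative measure term gives
$$
\int_\Omega |\nabla u_n|^{p(x)}\,dx \le p_{\max}\,\mathcal{J}(u_n)\le C,
$$
so the modular of $\nabla u_n$ is uniformly bounded, hence so is $\|\nabla u_n\|_{L^{p(\cdot)}(\Omega)}$. Because $u_n-\varphi_0\in W^{1,p(\cdot)}_0(\Omega)$, the Poincaré inequality in variable exponent Sobolev spaces (available under the log-Hölder hypothesis recalled in Appendix \ref{appA1}) yields a uniform bound on $\|u_n\|_{W^{1,p(\cdot)}(\Omega)}$. Since $p_{\min}>1$, $W^{1,p(\cdot)}(\Omega)$ is reflexive, so after extracting a subsequence $u_n\rightharpoonup u$ weakly in $W^{1,p(\cdot)}(\Omega)$, and by the compact embedding into $L^{p(\cdot)}(\Omega)$ I may further assume $u_n\to u$ a.e.\ in $\Omega$. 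The affine constraint $u-\varphi_0\in W^{1,p(\cdot)}_0(\Omega)$ is preserved under weak convergence, so $u\in\mathcal{K}$.

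It remains to show $\mathcal{J}(u)\le \liminf_n \mathcal{J}(u_n)$. The integrand $(x,\xi)\mapsto |\xi|^{p(x)}/p(x)$ is a nonnegative normal integrand, convex in $\xi$, so the Dirichlet piece is weakly lower semicontinuous by the classical Ioffe--De Giorgi lower semicontinuity theorem. For the measure piece I use the a.e.\ convergence: if $u(x)>0$ then $u_n(x)>u(x)/2>0$ for all large $n$, and if $u(x)=0$ the bound is trivial; in either case
$$
\chi_{\{u>0\}}(x)\le \liminf_{n\to\infty}\chi_{\{u_n>0\}}(x)\quad \text{a.e. }x\in\Omega.
$$
Since $0<\lambda_1\le\lambda(x)$, Fatou's lemma gives $\int_\Omega\lambda\chi_{\{u>0\}}\,dx\le\liminf_n\int_\Omega\lambda\chi_{\{u_n>0\}}\,dx$. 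Adding the two inequalities yields $\mathcal{J}(u)\le m$, so $u$ is a minimizer.

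The only nonroutine point is the lower semicontinuity of the free-boundary term $\int\lambda\chi_{\{u>0\}}\,dx$, which is \emph{not} continuous under weak convergence; however the pointwise-a.e.\ Fatou argument sketched above, standard in Alt--Caffarelli-type problems, handles it cleanly. The remaining ingredients (reflexivity, Poincaré inequality, Rellich-type compactness, and lower semicontinuity of the modular) are the standard facts about $W^{1,p(\cdot)}(\Omega)$ recalled in the appendix, and require no new idea here.
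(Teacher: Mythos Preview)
Your proof is correct and follows essentially the same direct-method argument as the paper: minimizing sequence, coercivity via Poincar\'e, reflexivity for weak compactness, a.e.\ convergence, Fatou for the $\lambda\chi_{\{u>0\}}$ term, and convexity for the gradient term. The only cosmetic differences are that the paper obtains a.e.\ convergence by passing through the constant-exponent embedding $W^{1,p_{\min}}\hookrightarrow L^{p_{\min}}$ rather than a variable-exponent Rellich theorem, and proves lower semicontinuity of the Dirichlet integral by the elementary tangent-line inequality $\tfrac{|\xi|^{p(x)}}{p(x)}\ge \tfrac{|\eta|^{p(x)}}{p(x)}+|\eta|^{p(x)-2}\eta\cdot(\xi-\eta)$ combined with $\nabla u_n\rightharpoonup\nabla u_0$ in $L^{p(\cdot)}$, instead of invoking Ioffe--De Giorgi.
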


\begin{proof}
The proof of existence is similar to the one in \cite{MW1}. Since  we are dealing with the Sobolev variable exponent, we write it down for the
reader's convenience.

Take  a minimizing sequence $(u_n)\subset\K$, then
$\mathcal{J}(u_n)$ is bounded, so $\int_{\Omega}|\nabla u_n|^{p(x)}$
and $|\{u_n>0\}|$ are bounded. As  $u_n = \varphi_0$ in
$\partial\Omega$, we have by Remark~\ref{equi} that $\|\nabla
u_n-\nabla \varphi_0\|_{p(x)}\leq C$ and by Lemma~\ref{poinc} we
also have $\|u_n-\varphi_0\|_{p(x)}\leq C$. Therefore, by Theorem
\ref{ref}
 there exists a
subsequence (that we still call $u_n$) and a function $u_0\in
W^{1,p(\cdot)}(\Omega)$ such that
\begin{align*}
&  u_n \rightharpoonup  u_0 \quad \mbox{weakly in }
W^{1,p(\cdot)}(\Omega),
\end{align*}
and by Theorem \ref{imb}
\begin{align*}
&  u_n \rightharpoonup  u_0 \quad \mbox{weakly in }
W^{1,p_{min}}(\Omega).
\end{align*}
Now, by the compactness of the immersion
$W^{1,{p_{min}}}(\Omega)\hookrightarrow L^{p_{min}}(\Omega)$ we have that, for a subsequence that we still denote $u_n$,
$$
u_n \to u_0 \quad \mbox{a.e. } \Omega.
$$

As $\K$ is convex and closed, it is weakly closed, so $u_0\in \K$.

Moreover,
\begin{align*}
 \{u_0>0\} &\subset \liminf_{n\to\infty}\{u_n>0\} \quad\mbox{ so that}\\
 \chi_{\{u_0>0\}}&\le\liminf_{n\to\infty}\chi_{\{u_n>0\}}.
\end{align*}

On the other hand,
$$
 \int_{\Omega} \frac{|\nabla u_0|^{p(x)}}{p(x)}\, dx \le \liminf_{n\to\infty}
\int_{\Omega}\frac{|\nabla u_n|^{p(x)}}{p(x)}\, dx.
$$
In fact,
\begin{equation}\label{G}
\int_\Omega \frac{|\nabla u_n|^{p(x)}}{p(x)}\,dx\ge\int_\Omega \frac{|\nabla u_0|^{p(x)}}{p(x)}\,dx
+\int_\Omega |\nabla u_0|^{p(x)-2}\nabla u_0\cdot(\nabla u_n-\nabla u_0)\,dx.
\end{equation}

Recall that $\nabla u_n$ converges weakly to $\nabla u_0$ in
$L^{p(\cdot)}(\Omega)$. Now, since  $|\nabla u_0|^{p(x)-1}\in L^{p'(\cdot)}(\Omega)$, by
Theorem \ref{ref} and passing to the limit in \eqref{G} we get
$$
\liminf_{n\to\infty}\int_\Omega \frac{|\nabla u_n|^{p(x)}}{p(x)}\,dx\ge
\int_\Omega \frac{|\nabla u_0|^{p(x)}}{p(x)}\,dx.
$$

Hence
$$
\mathcal{J}(u_0)\le \liminf_{n\to\infty}\mathcal{J}(u_n) =
\inf_{v\in\K} \mathcal{J}(v).
$$
Therefore, $u_0$ is a minimizer of $\mathcal{J}$ in $\K$.
\medskip
\end{proof}

{\begin{lema}\label{subsol} Let $1<p(x)<\infty$ and $0\le \lambda(x)<\infty$. Let $u$ be a local
minimizer of
$\mathcal{J}$. Then, $u$ is  $p(x)-$subharmonic.
\end{lema}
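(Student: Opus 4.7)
The natural competitor to use against a local minimizer $u$ is $v_{\ep}=u-\ep\varphi$ for a nonnegative test function $\varphi\in C_0^\infty(\Omega)$ and small $\ep>0$. Since $v_\ep\le u$ pointwise, one has the containment
\[
\{v_\ep>0\}\subset\{u>0\},
\]
so the free-boundary term satisfies
\[
\int_\Omega \lambda(x)\bigl(\chi_{\{v_\ep>0\}}-\chi_{\{u>0\}}\bigr)\,dx\le 0.
\]
Applying the local minimality $\mathcal J(u)\le\mathcal J(v_\ep)$ and rearranging yields
\[
\int_\Omega \frac{|\nabla u|^{p(x)}-|\nabla u-\ep\nabla\varphi|^{p(x)}}{\ep\, p(x)}\,dx\;\le\;\frac{1}{\ep}\int_\Omega \lambda(x)\bigl(\chi_{\{v_\ep>0\}}-\chi_{\{u>0\}}\bigr)\,dx\;\le\;0.
\]

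The next step is to let $\ep\to 0^+$. For each fixed $x$, the scalar function $f_x(\ep):=|\nabla u(x)-\ep\nabla\varphi(x)|^{p(x)}/p(x)$ is convex in $\ep$ (composition of a convex function with an affine map), so the difference quotient
\[
g_x(\ep):=\frac{f_x(0)-f_x(\ep)}{\ep}
\]
is nonincreasing in $\ep>0$ and increases, as $\ep\downarrow 0$, to $-f_x'(0)=|\nabla u|^{p(x)-2}\nabla u\cdot\nabla\varphi$. In particular
\[
g_x(\ep)\le |\nabla u(x)|^{p(x)-2}\nabla u(x)\cdot\nabla\varphi(x).
\]
The pointwise monotone limit on the left is $|\nabla u|^{p(x)-2}\nabla u\cdot\nabla\varphi$, whose integral is finite: $\varphi$ has compact support with $|\nabla\varphi|$ bounded, while $|\nabla u|^{p(x)-1}\in L^{p'(\cdot)}(\mathrm{supp}\,\varphi)$ since $|\nabla u|^{p(x)}\in L^1(\Omega)$, so H\"older's inequality in $L^{p(\cdot)}$ (Theorem~\ref{ref}) gives integrability.

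The monotone convergence theorem then lets us pass to the limit on the left side of the inequality above, yielding
\[
\int_\Omega |\nabla u|^{p(x)-2}\nabla u\cdot\nabla\varphi\,dx\le 0
\]
for every nonnegative $\varphi\in C_0^\infty(\Omega)$. This is precisely the weak formulation of $\Delta_{p(x)}u\ge 0$, so $u$ is $p(x)$-subharmonic. The main subtlety is the passage to the limit, which is handled cleanly by convexity; the nonlinearity of the $p(x)$-energy causes no real difficulty here because the sign of $\varphi$ plus the one-sided containment of supports makes the free-boundary term a favorable error.
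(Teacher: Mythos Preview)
Your argument is correct and follows the same approach as the paper: perturb by $u-\ep\varphi$ with $\varphi\ge0$, drop the free-boundary term using $\{u-\ep\varphi>0\}\subset\{u>0\}$, and pass to the limit via convexity of $q\mapsto|q|^{p(x)}/p(x)$. The paper's version is slightly terser, using the tangent inequality at $\nabla u-\ep\nabla\xi$ to bound the difference quotient by $-|\nabla u-\ep\nabla\xi|^{p-2}(\nabla u-\ep\nabla\xi)\cdot\nabla\xi$ before letting $\ep\to0$, whereas you organize the same convexity as a monotone limit of difference quotients; both are standard and equivalent.
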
}
\begin{proof}
Let $\ep>0$ and $0\leq \xi\in C^{\infty}_0$. Using the minimality
of $u$  we have
\begin{align*}0 &\leq \frac{1}{\ep} (\mathcal{J}(u-\ep
\xi)-\mathcal{J}(u))\leq \frac{1}{\ep} \int_{\Omega}\frac{|\nabla
u-\ep \nabla \xi|^p}p-\frac{|\nabla u|^p}p\, dx\\& \leq -\int_{\Omega}
 |\nabla u-\ep \nabla \xi|^{p-2}\nabla (u-\ep \nabla
\xi) \nabla \xi\, dx\end{align*} and if
we take $\ep\rightarrow 0$ we obtain $$0\leq -\int_{\Omega}
|\nabla u|^{p-2}\nabla u\nabla \xi\, dx
$$
\end{proof}

\begin{lema}\label{princ-max} Let $p$ be log-H\"{o}lder continuous, $1<p(x)<\infty$, $0\le \lambda(x)<\infty$ and  $u$  a
minimizer of $\mathcal{J}$ in $\K$. Then,
$\di 0\leq u\leq \sup_{\Omega}\varphi_0$.
\end{lema}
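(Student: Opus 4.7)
The plan is a two-sided truncation-comparison argument, exploiting that $\varphi_0$ satisfies $0 \le \varphi_0 \le M$ where $M := \sup_\Omega \varphi_0$.

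For the lower bound, I would test minimality against $v := u^+ = \max(u,0)$. Since $\varphi_0 \ge 0$ the trace of $v$ on $\partial\Omega$ equals that of $\varphi_0$, so $v \in \K$. A direct computation gives $\{v > 0\} = \{u > 0\}$ and $\nabla v = \nabla u\,\chi_{\{u > 0\}}$, hence
$$
\J(u) - \J(v) = \int_{\{u \le 0\}} \frac{|\nabla u|^{p(x)}}{p(x)}\, dx \ge 0,
$$
and minimality forces equality, so $\nabla u = 0$ a.e.\ on $\{u \le 0\}$. In particular $\nabla u^- \equiv 0$ a.e., while $\varphi_0 \ge 0$ gives $u^- \in W_0^{1, p(\cdot)}(\Omega)$; the Poincar\'e inequality (Lemma~\ref{poinc}) then yields $u^- = 0$ a.e., i.e., $u \ge 0$.

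For the upper bound, I would take $w := \min(u, M)$. The case $M = 0$ is immediate: then $\varphi_0 \equiv 0$, the zero function is admissible with $\J(0) = 0$, and combined with $u \ge 0$ this forces $u \equiv 0$. When $M > 0$, the bound $\varphi_0 \le M$ on $\partial\Omega$ gives $w \in \K$, and since $u \ge 0$ and $M > 0$ one has $\{w > 0\} = \{u > 0\}$. Now $\nabla w = \nabla u\,\chi_{\{u < M\}}$, so the analogous identity
$$
\J(u) - \J(w) = \int_{\{u \ge M\}} \frac{|\nabla u|^{p(x)}}{p(x)}\, dx \ge 0
$$
together with minimality gives $\nabla u = 0$ a.e.\ on $\{u \ge M\}$. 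Then $(u - M)^+ \in W_0^{1,p(\cdot)}(\Omega)$ has vanishing weak gradient, and a second application of Poincar\'e concludes $u \le M$.

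There is no genuine obstacle here; the argument is the standard truncation recipe used in \cite{AC, DP, MW1} and is essentially insensitive to the $x$-dependence of $p$. The only place the log-H\"older assumption plays any role is implicit, through the Poincar\'e-type inequality in $W_0^{1,p(\cdot)}(\Omega)$ used at the final step of each half, which is the minimal regularity under which the variable-exponent Sobolev theory recalled in Appendix~\ref{appA1} behaves as expected.
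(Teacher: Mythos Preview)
Your argument is correct and is precisely the truncation--comparison scheme the paper invokes (it simply cites Lemma~1.5 of \cite{ACF} for the computation you spell out). One small correction: the log-H\"older hypothesis is \emph{not} what underlies the Poincar\'e inequality of Lemma~\ref{poinc} (mere continuity of $p$ suffices there); rather, it is needed for the assertions $u^-\in W_0^{1,p(\cdot)}(\Omega)$ and $(u-M)^+\in W_0^{1,p(\cdot)}(\Omega)$, which you treat as obvious trace facts but which in the variable-exponent setting require justification---the paper singles out exactly this point and handles it by citing Corollary~3.6 and Theorem~3.7 of \cite{D}.
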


\begin{proof}
The proof follows as in Lemma 1.5 in \cite{ACF} once we show that the functions $\min(M-u,0)$ and $\min(u,0)$ are in $W_0^{1,p(\cdot)}(\Omega)$, where $M=\sup_{\Omega} \varphi_0$.

But this fact follows from Corollary 3.6 and Theorem 3.7 in \cite{D}.
\end{proof}


\section{Holder continuity}

In this section we study the regularity of the minimizers of
${\mathcal J}$.

As a first step, we prove that minimizers are H\"older continuous
provided the function $p$ is log--H\"older continuous. We use ideas from
\cite{AM} and \cite{DP}.

\begin{teo}\label{cotaalpha1} Assume $p$ has modulus of continuity
$\omega(r)=C\log(\frac1r)^{-1}$.  Then, for every $0<\gamma_0<1$, there
exist $r_0(\gamma_0,p_{min})$ and $\rho_0=\rho_0(r_0,\gamma_0)$  such that, if $u$ is a minimizer of $\J$  in $B_{r_0}$ then,
$u\in C^{\gamma_0}(B_{\rho_0})$. Moreover, if $M>0$ is  such that
$\|u\|_{L^{\infty}(B_{r_0})}\leq M$ there exists
$C=C(N,p_{min},p_{max},\omega(r),\lambda_2,M,\gamma_0)$  such
that $\|u\|_{C^{\gamma_0}(\overline{B_{\rho_0}})}\leq C$.
\end{teo}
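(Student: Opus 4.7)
My plan is to prove the Morrey–Campanato type decay estimate
\begin{equation*}
\int_{B_\rho(x_0)} |\nabla u|^{p(x)}\,dx \;\le\; C\rho^{N-\sigma}
\end{equation*}
for every small $\sigma>0$ and $\rho$ small, and then invoke the variable exponent version of Morrey's embedding to conclude $u\in C^{0,\gamma_0}$ with $\gamma_0 = 1-\sigma/p_+$ arbitrarily close to $1$. The log-H\"older hypothesis enters exactly at the points where one needs to pass between $\int |\nabla u|^{p(x)}$ and powers of $|B_r|$: on a ball of radius $r$, the ratio $r^{p_+(B_r)-p_-(B_r)}$ is bounded by a constant because of the log-H\"older modulus, so the modular and norm are uniformly comparable at small scales.

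First, I would fix $x_0\in B_{\rho_0}$ and $r$ small, and let $v$ be the $p(x)$-harmonic replacement of $u$ on $B_r(x_0)$, that is, the unique minimizer of $w\mapsto \int_{B_r}|\nabla w|^{p(x)}/p(x)\,dx$ with $w-u\in W_0^{1,p(\cdot)}(B_r)$. Since by Lemma \ref{subsol} $u$ is $p(x)$-subharmonic and $v$ is $p(x)$-harmonic with the same trace on $\partial B_r$, the comparison principle for the $p(x)$-Laplacian gives $0\le u\le v$ in $B_r$, so $\chi_{\{v>0\}}\ge \chi_{\{u>0\}}$. Testing the minimality of $u$ against $v$ and using this monotonicity yields
\begin{equation*}
\int_{B_r} \frac{|\nabla u|^{p(x)}}{p(x)}\,dx \;\le\; \int_{B_r}\frac{|\nabla v|^{p(x)}}{p(x)}\,dx + \lambda_2|B_r|.
\end{equation*}

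The second step is to replace $u$ by $v$ in the energy integral up to a lower-order error. Using the weak equation $\Delta_{p(x)}v=0$ with test function $u-v\in W_0^{1,p(\cdot)}(B_r)$, together with the standard convexity inequalities for the integrand $|\xi|^{p(x)}/p(x)$ (treating $p(x)\ge 2$ and $1<p(x)<2$ separately, with the weight $(|\nabla u|+|\nabla v|)^{p(x)-2}$ in the degenerate range), one obtains the Caccioppoli-type bound $\int_{B_r}|\nabla(u-v)|^{p(x)}\,dx \le C|B_r|$. Combined with the regularity theory for $p(x)$-harmonic functions under the log-H\"older condition (which yields a local $L^\infty$ bound for $|\nabla v|$ depending on $\|v\|_{L^\infty}\le\|u\|_{L^\infty}\le M$), one deduces the Campanato decay
\begin{equation*}
\int_{B_\rho(x_0)} |\nabla u|^{p(x)}\,dx \;\le\; C\Bigl(\frac{\rho}{r}\Bigr)^{N} \int_{B_r(x_0)} |\nabla v|^{p(x)}\,dx + C|B_r| \;\le\; C\Bigl(\frac{\rho}{r}\Bigr)^{N} \int_{B_r(x_0)}|\nabla u|^{p(x)}\,dx + C r^{N}.
\end{equation*}
Here I used that $\int|\nabla v|^{p(x)}\le \int|\nabla u|^{p(x)}$ by the minimization property defining $v$. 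A standard iteration lemma (Giaquinta, Chapter III, Lemma 2.1) applied to the quantity $\Phi(\rho)=\int_{B_\rho}|\nabla u|^{p(x)}\,dx$ then gives $\Phi(\rho)\le C\rho^{N-\sigma}$ for any prescribed $\sigma>0$, once $r_0$ is chosen small enough.

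The main technical obstacle is handling the variable exponent throughout: one cannot directly differentiate inequalities and must everywhere work with the modular $\int |\nabla u|^{p(x)}\,dx$ rather than with a norm, absorbing the price $r^{p_+(B_r)-p_-(B_r)}\le C$ via log-H\"older continuity of $p$. A secondary difficulty is the convexity inequality used to pass from the energy comparison to the $|\nabla(u-v)|^{p(x)}$ bound, which behaves differently above and below $p(x)=2$ — this is managed by splitting the analysis into the two regimes and using the weighted Clarkson-type inequality in the subquadratic range. With these points settled, the conversion of the energy decay estimate into H\"older regularity follows from the variable-exponent Sobolev–Morrey embedding, with the constant depending exactly on the data listed in the statement.
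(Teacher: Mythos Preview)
Your overall strategy---compare $u$ with its $p(x)$-harmonic replacement $v$, bound $\nabla(u-v)$ via convexity, and deduce a Campanato decay for the energy---is the same as the paper's. The genuine gap is in the passage from $v$ to the decay estimate. You write that ``regularity theory for $p(x)$-harmonic functions under the log-H\"older condition \dots\ yields a local $L^\infty$ bound for $|\nabla v|$.'' This is not known: the Acerbi--Mingione $C^{1,\alpha}$ result (Theorem~\ref{regu} in the appendix) requires $p$ to be \emph{H\"older} continuous, not merely log-H\"older. Under the log-H\"older assumption alone, $v$ is only known to be $C^{\gamma}$ for every $\gamma<1$, and no Lipschitz bound is available. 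Consequently your claimed Campanato inequality with exponent $N$,
\[
\int_{B_\rho}|\nabla v|^{p(x)}\,dx\;\le\;C\Bigl(\frac{\rho}{r}\Bigr)^{N}\int_{B_r}|\nabla v|^{p(x)}\,dx,
\]
does not follow, and the Giaquinta iteration cannot be launched.

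The paper avoids this by using only the $C^{\gamma}$ regularity of $v$: Caccioppoli's inequality (Lemma~\ref{caccio}) gives $\int_{B_{r/2}}|\nabla v|^{p(x)}\le C\int_{B_{3r/4}}|(v-\{v\}_{3r/4})/r|^{p(x)}$, and the $C^{\gamma}$ bound on $v$ converts this directly into $\int_{B_{r/2}}|\nabla v|^{p(x)}\le C\,r^{N-(1-\gamma)p_+}$. There is then no iteration lemma; instead one sets $\rho=r^{1+\varepsilon}$ and chooses $\varepsilon$ and $\gamma$ (depending on $\gamma_0$ and $p_{min}$) so that the two error exponents line up and yield $\bigl(\pint_{B_\rho}|\nabla u|^{p_-}\bigr)^{1/p_-}\le C\rho^{\gamma_0-1}$. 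Your bound $\int_{B_r}|\nabla(u-v)|^{p(x)}\le C|B_r|$ in the subquadratic region is also too optimistic as stated---the weighted inequality only controls $|\nabla(u-v)|^2(|\nabla u|+|\nabla v|)^{p-2}$, and passing to $|\nabla(u-v)|^{p(x)}$ costs a term $\eta\int(|\nabla u|+|\nabla v|)^{p}$ that must be absorbed; the paper does this on the smaller ball $B_\rho$, not $B_r$.
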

\begin{proof}
We will prove that  there exist $r_0$ and $\rho_0$ as in the statement such that, if $\rho\leq
\rho_0$  and $\|u\|_{L^{\infty}(B_{r_0})}\leq M$ then,
\begin{equation}\label{algosale} \Big(\pint_{B_\rho}|\nabla
u|^{p_-}\,dx\Big)^{1/{p_-}}\le C \rho^{\gamma_0-1}
\end{equation}
where $p_-=p_-(B_{r_0})$.

\medskip

In fact, let  $0<r\leq r_0$ and  $v$ be  the solution of
\begin{equation}\label{eculambda}
\Delta_{p(x)} v=0 \quad\mbox{in
}B_r, \qquad v-u\in W_0^{1,p}(B_r).\end{equation}

Let $u^s(x)=s u(x)+(1-s) v(x)$. By using that $v$ is a solution of
\eqref{eculambda} we get,
\begin{equation}\label{standard}\begin{aligned}
\int_{B_r} \frac{|\nabla u|^p}{p}- \frac{|\nabla v|^p}{p}\, dx&=
\int_0^1 \frac{ds}{s}\int_{B_r}\Big(|\nabla u^s|^{p-2}\nabla u^s-
|\nabla v|^{p-2}\nabla v\Big) \cdot \nabla(u^s-v)\, dx.
\end{aligned}
\end{equation}

 By a standard inequality (see Remark \ref{desip} ) we have that,
\begin{equation}\label{a1a22}\begin{aligned}
\int_{B_r} \frac{|\nabla u|^p}{p}- \frac{|\nabla v|^p}{p}\, dx \geq
&C\Big(\int_{B_r\cap\{p\geq 2\} } |\nabla u-\nabla v|^p \,
dx\\&+\int_{B_r\cap\{p<2\} } |\nabla u-\nabla v|^2\Big(|\nabla
u|+|\nabla v|\Big)^{p-2} \, dx\Big) ,\end{aligned}\end{equation}
where $C=C(p_{min},p_{max},N)$.

Therefore,  by the minimality of $u$, we have (if
$A_1=B_r\cap\{p<2\}$ and $A_2=B_r\cap\{p\geq 2\}$)
\begin{align}\label{rn2}&\int_{ A_2} |\nabla u-\nabla v|^p  \, dx\leq
C \lambda_2 r^N\\\label{rn3} &\int_{A_1}|\nabla u-\nabla
v|^2(|\nabla u|+|\nabla v|)^{p-2} \, dx\leq C\lambda_2 r^N
\end{align}
Let $\ep>0$. Take $\rho=r^{1+\ep}$  and suppose that $r^{\ep}\leq
1/2$. Take $\eta$ to be chosen later. Then, by Young inequality and
the definition of $A_1$ we obtain,
\begin{equation}\label{rn4}
\begin{aligned}
\int_{A_1\cap B_{\rho}} |\nabla
u-\nabla v|^p\, dx \leq& \frac{C}{\eta}\int_{A_1\cap
B_{r}}(|\nabla u|+|\nabla v|)^{p-2}|\nabla u-\nabla v|^2 \,
dx\\
&+ C\eta\int_{ B_{\rho}\cap A_1} (|\nabla u|+|\nabla v|)^p\,
dx\\
\leq& \frac{C}{\eta}  r^{N}+ C\eta\int_{ B_{\rho}\cap A_1}
(|\nabla u|+|\nabla v|)^p\, dx.
\end{aligned}
\end{equation}
Therefore, by \eqref{rn2} and \eqref{rn4}, we get,

\begin{equation}\label{rn51}\int_{B_{\rho}}|\nabla u-\nabla v|^p\, dx
 \leq \frac{C}{\eta} r^N+C\eta\int_{B_{\rho}\cap A_1}(|\nabla u|+|\nabla v|)^p\, dx.\end{equation}
where $C=C(\lambda_2,N,p_{min},p_{max}).$


Since, $|\nabla u|^q\leq C(|\nabla u-\nabla v|^q+|\nabla v|)^q)$,
for any $q>1$, we have by \eqref{rn51}, choosing $\eta$ small that

\begin{equation}\label{otra}\int_{B_{\rho}} |\nabla u|^p\, dx
 \leq {C} r^N+C\int_{B_{\rho}} |\nabla v|^p\, dx.\end{equation}
where $C=C(\lambda_2,N,p_{min},p_{max}).$

 On the other hand, we have by Lemma \ref{caccio}
\begin{equation}\label{parav1}
\int_{B_{r/2}} |\nabla v|^p\, dx\leq C \int_{B_{3r/4}}
\Big|\frac{v-\{v\}_{3r/4}}{r}\Big|^p\, dx.\end{equation} By the
regularity of
 solutions, (see
  \cite{AM}) we have that for any $0<\gamma<1$,
\begin{equation}\label{parav2}
|v-\{v\}_{\tfrac34 r}|\leq C(\gamma,\|v\|_{L^{\infty}(B_r)},\omega(r))\
r^{\gamma}\end{equation}

Therefore,
\begin{equation}\label{otra2}\int_{B_{\rho}} |\nabla u|^{p}\, dx
 \leq C r^N +
C(\gamma,\|v\|_{L^{\infty}(B_r)},\omega(r),p_{min},p_{max},N,\lambda_2)\
r^{N-(1-\gamma)p_+}.
\end{equation}

Since $v-u\in W_0^{1,p(\cdot)}(B_r)$, the same proof as that of Lemma \ref{princ-max}  shows that $\|v\|_{L^{\infty}(B_r)}\leq
\|u\|_{L^{\infty}(B_r)}$. On the other hand, since $u$ is a
subsolution, by comparison we have $0\leq u\leq v$ and then
\begin{equation}\label{cotalinfinito}
\|v\|_{L^{\infty}(B_r)}= \|u\|_{L^{\infty}(B_r)}\end{equation}

 This means that the constant $C$ depends on $p_{min},p_{max},\
N,\lambda_2,\gamma, \omega(r)$ and $\|u\|_{L^{\infty}(B_{r})}$.

Let $0<\gamma_0<1$ and let $\ep>0$ and $0<\gamma<1$ such that
$$\frac{N}{p_{min}}\frac{\ep}{1+\ep}+(1-\gamma)=1-\gamma_0.$$

Let $r_0>0$ such that
$$\frac{p_+(B_{r_0})}{p_-(B_{r_0})}\leq 1+\ep .$$

From now on we denote, $p_-=p_-(B_{r_0})$ and $p_+=p_+(B_{r_0})$.

Then,
$$\rho^{\frac{-(N\ep+(1-\gamma)p_+)}{1+\ep}}\leq
\rho^{\frac{-N\ep}{1+\ep}-(1-\gamma)p_-}.$$

Then we have by \eqref{otra2}, \eqref{cotalinfinito} and by our
election of $\gamma$ and $\rho$ that,
$$
\begin{aligned}\pint_{B_{\rho}} |\nabla u|^{p_-}\, dx&\leq
\pint_{B_{\rho}} |\nabla u|^{p}\,
dx+\frac{1}{|B_{\rho}|}\int_{B_{\rho}\cap\{|\nabla u|<1\}} |\nabla
u|^{p_-}\, dx \\&\leq \pint_{B_{\rho}} |\nabla u|^{p}\, dx+ 1\\
&\leq 1+C \Big(\frac{r}{\rho}\Big)^N +
C
r^{-(1-\gamma)p_+}\Big(\frac{r}{\rho}\Big)^N
\\ & \leq 1+Cr^{-\ep N}+C r^{-\ep N-(1-\gamma)p_+}= 1+C\rho^{\frac{-N\ep}{1+\ep}}+ C
 \rho^{\frac{-N\ep-p_+(1-\gamma)}{1+\ep}}\\ &\leq
 C\rho^{\frac{-N\ep-p_+(1-\gamma)}{1+\ep}}\leq C
 \rho^{\frac{-N\ep}{1+\ep}-(1-\gamma)p_-}
\end{aligned}
$$

where $ C(\gamma_0,N,M,\omega(r),\lambda_2,p_{min},p_{max})$.

Let $r_0$ as before for this choice of $\ep$ and small enough so
that $r_0^{\ep}\leq 1/2$. Then, if $\rho\leq \rho_0=r_0^{1+\ep}\le1$,
\begin{equation}\label{otra4}\pint_{B_{\rho}} |\nabla u|^{p_-}\,
dx\leq  C \rho^{-[\frac{N\ep}{(1+\ep)p_-}+(1-\gamma)]p_-}\leq  C
\rho^{-[\frac{N\ep}{(1+\ep)p_{min}}+(1-\gamma)]p_-}= C
\rho^{-(1-\gamma_0)p_-}
\end{equation}
This is, if $\rho\leq \rho_0=r_0^{1+\ep}$
$$\Big(\pint_{B_{\rho}} |\nabla u|^{p_-}\,
dx\Big)^{1/p_-}\leq  C \rho^{\gamma_0-1}.$$

Therefore \eqref{algosale} holds.

Applying Morrey's Theorem, see e.g. \cite{MZ}, Theorem 1.53, we
conclude that, $u\in C^{\gamma_0}(B_{\rho_0})$
and $\|u\|_{C^{\gamma_0}(\overline{B_{\rho_0}})}\leq {C}$ with $
C(\gamma_0,M,N,\omega(r),\lambda_2,p_{min},p_{max})$.

\medskip

\end{proof}

Thus, we have the following,
\begin{teo}\label{cotaalpha} Assume $p$ has modulus of continuity $\omega(r)=C\log(\frac1r)^{-1}$.
Then, for every $0<\gamma_0<1$, any minimizer $u$ belongs to
$C^{\gamma_0}(\Omega)$. Moreover,  let $\Omega'\subset\subset\Omega$ and
$M=\|u\|_{L^{\infty}(\Omega)}$. There exists,
$C=C(N,\Omega',p_{min},p_{max},\omega(r),\lambda_2,M,\gamma_0)$ such that
 $\|u\|_{C^{\gamma_0}(\overline{\Omega'})}\leq C$
\end{teo}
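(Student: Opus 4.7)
The plan is to reduce Theorem \ref{cotaalpha} to the local result in Theorem \ref{cotaalpha1} by a simple covering argument, using the fact that minimizers are globally bounded on $\Omega$. The key observation is that the constants $r_0,\rho_0$ in Theorem \ref{cotaalpha1} depend only on $\gamma_0$ and $p_{min}$, while the H\"older constant $C$ depends on the $L^{\infty}$-bound $M$ and on universal quantities; none depend on the location of the ball. So once we have a global $L^\infty$ bound, we can run Theorem \ref{cotaalpha1} at every point of $\Omega'$ with the same parameters.

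First, by Lemma \ref{princ-max} we know $0\le u\le \sup_\Omega \varphi_0=:M$, hence $\|u\|_{L^\infty(\Omega)}\le M$. To show $u\in C^{\gamma_0}(\Omega)$, fix any $x_0\in\Omega$ and choose $r_0$ as in Theorem \ref{cotaalpha1} and also small enough that $B_{r_0}(x_0)\subset\Omega$; note that $u$ restricted to $B_{r_0}(x_0)$ is itself a minimizer of $\J$ among competitors with the same boundary values on $\partial B_{r_0}(x_0)$, so Theorem \ref{cotaalpha1} gives $u\in C^{\gamma_0}(B_{\rho_0}(x_0))$. Since $x_0$ was arbitrary, $u$ is locally H\"older continuous in $\Omega$.

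Next, to get the quantitative bound on $\Omega'\subset\subset\Omega$, set $d=\mathrm{dist}(\Omega',\partial\Omega)>0$ and fix $r_0=r_0(\gamma_0,p_{min})$ from Theorem \ref{cotaalpha1}, shrinking it if necessary so that $r_0<d$. Then for every $x_0\in\Omega'$, Theorem \ref{cotaalpha1} applied on $B_{r_0}(x_0)$ yields
\[
\|u\|_{C^{\gamma_0}(\overline{B_{\rho_0}(x_0)})}\le C,
\]
with a constant $C=C(N,p_{min},p_{max},\omega(r),\lambda_2,M,\gamma_0)$ that is independent of $x_0$. Cover the compact set $\overline{\Omega'}$ by finitely many balls $B_{\rho_0/2}(x_i)$, $i=1,\dots,K$.

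Finally, to bound $\|u\|_{C^{\gamma_0}(\overline{\Omega'})}$ globally on $\Omega'$, take any $x,y\in\overline{\Omega'}$. If $|x-y|<\rho_0/2$, then $x,y$ lie in a common ball $B_{\rho_0}(x_i)$ for some $i$, so the local estimate gives $|u(x)-u(y)|\le C|x-y|^{\gamma_0}$. Otherwise $|x-y|\ge \rho_0/2$, and using the global bound,
\[
|u(x)-u(y)|\le 2M\le 2M\Bigl(\tfrac{2}{\rho_0}\Bigr)^{\gamma_0}|x-y|^{\gamma_0}.
\]
Combining both cases yields a bound on $\|u\|_{C^{\gamma_0}(\overline{\Omega'})}$ of the form required. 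There is no real obstacle here; the argument is essentially bookkeeping once Theorem \ref{cotaalpha1} and the $L^\infty$ bound of Lemma \ref{princ-max} are in hand.
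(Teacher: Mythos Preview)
Your argument is correct and is exactly the covering argument the paper leaves implicit: in the paper, Theorem \ref{cotaalpha} is simply stated with the phrase ``Thus, we have the following,'' immediately after Theorem \ref{cotaalpha1}, with no further proof. Your write-up spells out the standard reduction, and the bookkeeping (uniformity of $r_0,\rho_0$ via the log-H\"older modulus, shrinking $r_0<\mathrm{dist}(\Omega',\partial\Omega)$, and the near/far dichotomy for points of $\Omega'$) is all in order.
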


Then, we have that $u$ is continuous. Therefore, $\{u>0\}$ is open.
We can prove the following property for minimizers.

\begin{prop}\label{solucion} Assume $p$ has modulus of continuity $\omega(r)=C\log(\frac1r)^{-1}$. Let $u$ be a minimizer of
$\mathcal{J}$ in $\K$. Then, $u$ is  $p(x)$--harmonic in
$\{u>0\}$.
\end{prop}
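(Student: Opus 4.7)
The strategy is the standard Euler--Lagrange argument, exploiting the fact that on the positivity set small perturbations leave the measure term unchanged, so that minimality produces a two-sided variational inequality rather than the one-sided one obtained in Lemma~\ref{subsol}.

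First, by Theorem~\ref{cotaalpha} the minimizer $u$ is (H\"older) continuous, hence $\{u>0\}$ is open. Fix $x_0\in\{u>0\}$ and choose $r>0$ small enough that $B:=B_r(x_0)\Subset\{u>0\}$. By continuity of $u$ on $\overline B$, there exists $\delta>0$ with $u\geq\delta$ on $\overline B$.

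Next, pick any $\xi\in C_0^\infty(B)$ and let $|\varepsilon|<\delta/(1+\|\xi\|_\infty)$. The competitor $u+\varepsilon\xi$ coincides with $u$ outside $B$, and on $B$ satisfies $u+\varepsilon\xi\geq\delta-|\varepsilon|\|\xi\|_\infty>0$. Consequently $u+\varepsilon\xi\in\K$ and
$$
\chi_{\{u+\varepsilon\xi>0\}}=\chi_{\{u>0\}} \quad\text{in }\Omega.
$$
Thus the $\lambda$-term cancels in $\mathcal J(u+\varepsilon\xi)-\mathcal J(u)$, and minimality of $u$ yields
$$
\int_B\frac{|\nabla u+\varepsilon\nabla\xi|^{p(x)}}{p(x)}\,dx \;\geq\;\int_B\frac{|\nabla u|^{p(x)}}{p(x)}\,dx
$$
for all $\varepsilon$ in a neighborhood of $0$.

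Finally, since the map $\varepsilon\mapsto \int_B p(x)^{-1}|\nabla u+\varepsilon\nabla\xi|^{p(x)}dx$ is smooth near $\varepsilon=0$ (the integrand is dominated uniformly in $\varepsilon$ by an $L^1$ function by convexity and $p_{\max}<\infty$), and attains a minimum at $\varepsilon=0$, its derivative there vanishes, giving
$$
\int_B |\nabla u|^{p(x)-2}\nabla u\cdot\nabla\xi\,dx=0\qquad\forall\,\xi\in C_0^\infty(B).
$$
Hence $\Delta_{p(x)}u=0$ in $B$ in the distributional sense. As $x_0\in\{u>0\}$ was arbitrary, $u$ is $p(x)$-harmonic throughout $\{u>0\}$. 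There is no genuine obstacle here; the only point requiring mild care is the justification of differentiating under the integral sign, which follows at once from the uniform dominated bound on $|\nabla u+\varepsilon\nabla\xi|^{p(x)}$ for $|\varepsilon|\leq 1$.
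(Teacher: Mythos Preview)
Your argument is correct and is the standard first-variation (Euler--Lagrange) proof: once you know $u$ is continuous so that $\{u>0\}$ is open, you perturb by $\varepsilon\xi$ with $\xi\in C_0^\infty(B)$, $B\Subset\{u>0\}$, observe the characteristic-function term is unaffected, and differentiate at $\varepsilon=0$. The only slightly loose point is your phrase ``the integrand is dominated \dots\ by convexity'': what you actually need is a uniform $L^1$ bound on the \emph{difference quotients} (or on the $\varepsilon$-derivative), and this follows from the two convexity inequalities
\[
|\nabla u|^{p(x)-2}\nabla u\cdot\nabla\xi \;\le\; \frac{|\nabla u+\varepsilon\nabla\xi|^{p(x)}-|\nabla u|^{p(x)}}{\varepsilon\,p(x)} \;\le\; |\nabla u+\varepsilon\nabla\xi|^{p(x)-2}(\nabla u+\varepsilon\nabla\xi)\cdot\nabla\xi
\]
for $\varepsilon>0$, together with $|\nabla u|^{p(x)-1}\in L^{p'(\cdot)}(\Omega)\hookrightarrow L^1(B)$.

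The paper takes a different route: instead of differentiating, it compares $u$ on $B$ with its $p(x)$-harmonic replacement $v$ (the solution of $\Delta_{p(x)}v=0$ in $B$ with $v-u\in W_0^{1,p(\cdot)}(B)$), uses minimality and the vanishing of the $\lambda$-term to obtain $\int_B\frac{|\nabla u|^p}{p}\le\int_B\frac{|\nabla v|^p}{p}$, and then invokes the strict monotonicity inequalities of Remark~\ref{desip} to force $\nabla u=\nabla v$ in $B$. Your approach is arguably more elementary and yields the weak equation directly; the paper's approach avoids any limit passage and reuses the $A_1/A_2$ splitting already set up in Section~3, which is consistent with how the rest of the paper handles the variable exponent.
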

\begin{proof}
Let $B\subset \{u>0\}$ be a ball and let $v$ such that
$$
\Delta_{p(x)}v=0 \quad \mbox{ in } B,\qquad
v-u\in W_0^{1,p}(B).
$$

Since
 $u>0$ in $B$ we get, proceeding as in \eqref{standard} and \eqref{a1a22},
 \begin{align*} 0&\geq
\int_{B}\frac{|\nabla u|^p}p-\frac{|\nabla v|^p}p\, dx+\lambda(x)
\chi_{B\cap\{u>0\}}-\lambda(x)\chi_{B\cap\{v>0\}}\ge \int_{B}\frac{|\nabla u|^p}p-\frac{|\nabla v|^p}p
\, dx\\&\geq C\Big(\int_{A_1} \big(|\nabla u|+|\nabla v|\big)^{p-2}
|\nabla u-\nabla
v|^2\, dx+ \int_{A_2} |\nabla u-\nabla v|^p \, dx.
\Big)\end{align*}

Therefore, $$ \int_{A_1} \big(|\nabla u|+\nabla v|\big)^{p-2}|\nabla u-\nabla v|^2\,
dx=0.$$
Thus, $\big(|\nabla u|+\nabla v|\big)^{p-2}|\nabla u-\nabla v|^2=0$ in $A_1$ and,
by the definition of
$A_1$, we conclude that $|\nabla u-\nabla v|=0$ in this set.

On the other hand, we also have
$$\int_{A_2} |\nabla u-\nabla v|^p\, dx=0$$ so that  $|\nabla u-\nabla
v|=0$ everywhere in $B$.

Hence, as $u-v\in W_0^{1,p}(B)$ we have that $u=v$. Thus,
$\Delta_{p(x)}u=0$ in $B$.

\end{proof}

\section{Lipschitz  continuity}

In this section we prove the Lipschitz continuity and the non degeneracy of the minimizers. We assume throughout this section  that $p(x)$ is Lipschitz continuous. We take ideas from \cite{AC}.

\begin{lema}\label{gamapromedio}
Let $p$ be Lipschitz continuous. Let $u$ be a minimizer in $B_r(x_0)
\subset\subset \Omega$ and $v$ a solution to
$$
\Delta_p v=0\quad\mbox{in }B_r(x_0),\quad \quad v-u\in W_0^{1,p}(B_r(x_0)).
$$
Then, there exist  $r_0=r_0(p_{max},p_{min},L,\|u\|_\infty)$ and $C=C(p_{max},p_{min},N)$ such that for every  $\ep>0$ there exists $M_{\ep}=M(\ep,p_{max},p_{min},L,\|u\|_\infty)$ so that
if $M\geq M_{\ep}$ and
$r\leq r_0$,
$$
\int_{B_r(x_0)} |\nabla (u-v)|^{p(x)}\, dx \geq C |B_r(x_0) \cap
\{u=0\}|\, M^{(1-\ep)p_-}.
$$
where $$M=\frac{1}{r}\sup_{B_{\frac34\,r}} u. $$

\end{lema}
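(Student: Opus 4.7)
The plan is to combine three ingredients: a comparison argument forcing $u\leq v$, the Harnack inequality for nonnegative $p(x)$-harmonic functions (with scaling-invariant constant), and a variable-exponent Poincar\'e inequality applied to the difference $w=v-u\in W_0^{1,p(\cdot)}(B_r(x_0))$. The Lipschitz hypothesis on $p(x)$ together with the choice $r\leq r_0$ keeps the oscillation $p_+-p_-$ on $B_r(x_0)$ small, allowing one to replace $p(x)$ by $p_-$ in exponents at the cost of an arbitrarily small $\ep$ loss; the threshold $M_\ep$ will be chosen so that $M\geq M_\ep$ absorbs both the inhomogeneous lower-order term in the Harnack inequality and the multiplicative constants.

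First, since $u$ is $p(x)$-subharmonic (Lemma~\ref{subsol}) and $v$ is $p(x)$-harmonic with the same trace on $\partial B_r(x_0)$, the comparison principle yields $0\leq u\leq v$ in $B_r(x_0)$. Hence $w=v-u\geq 0$, $w\in W_0^{1,p(\cdot)}(B_r(x_0))$, and $w=v$ on $\{u=0\}\cap B_r(x_0)$. Since $v\geq u$, one has $\sup_{B_{3r/4}(x_0)} v\geq \sup_{B_{3r/4}(x_0)} u = Mr$. I would then apply the Harnack inequality for nonnegative $p(x)$-harmonic functions (see Remark~\ref{harna2}), whose constant depends on $\|v\|_\infty\leq \|u\|_\infty$ but is invariant under rescalings $v\mapsto \alpha v$. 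Combined with a standard chain-of-balls argument starting from a point where $v$ nearly realizes its supremum, and absorbing the lower-order Harnack term using $M\geq M_\ep$ and $r\leq r_0$, this produces a pointwise estimate $v\geq c M r$ on an interior ball comparable in size to $B_r(x_0)$.

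Next, I apply the variable-exponent Poincar\'e inequality to $w\in W_0^{1,p(\cdot)}(B_r(x_0))$ to obtain
\[
\int_{B_r(x_0)}\Big|\frac{w}{r}\Big|^{p(x)}\,dx \leq C\int_{B_r(x_0)}|\nabla w|^{p(x)}\,dx.
\]
Restricting the left-hand side to $\{u=0\}\cap B_r(x_0)$, where $w/r=v/r\geq cM$, and taking $M\geq M_\ep$ large enough that $cM\geq 1$, one has the pointwise bound $(cM)^{p(x)}\geq (cM)^{p_-}$. The Lipschitz continuity of $p(x)$ yields $p_+-p_-\leq L r_0$, so enlarging $M_\ep$ if necessary lets one absorb $c^{p_-}$ into an $M^{\ep p_-}$ factor and convert $(cM)^{p_-}$ into $M^{(1-\ep)p_-}$ up to an absolute constant. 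This produces
\[
\int_{B_r(x_0)}|\nabla(u-v)|^{p(x)}\,dx \geq C\,|B_r(x_0)\cap\{u=0\}|\,M^{(1-\ep)p_-},
\]
with $C=C(p_{min},p_{max},N)$, as claimed.

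The step I expect to be the main obstacle is the application of Harnack's inequality. In the $p(x)$-setting the Harnack constant depends nontrivially on $\|v\|_\infty$ and the inequality is inhomogeneous (of the shape $\sup v\leq C(\inf v + R)$). Propagating the bare bound $v(y_0)\geq Mr$ at a single point to a quantitative lower bound $v\geq c M r$ on an interior subball of comparable size requires a careful chain-of-balls argument, and absorbing the inhomogeneity is precisely what forces both the threshold $M\geq M_\ep$ and the $(1-\ep)$ loss in the exponent of $M$ in the final estimate.
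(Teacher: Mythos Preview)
The approach has a genuine gap at the very point you flag as ``the main obstacle.''

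You assert that a Harnack chain-of-balls argument yields $v\geq cMr$ on an interior subball, and then in the Poincar\'e step you restrict the left-hand integral to $\{u=0\}\cap B_r(x_0)$, ``where $w/r=v/r\geq cM$.'' These two statements are incompatible: Harnack only delivers the lower bound on an interior subball (say $B_{3r/4}(x_0)$), whereas the zero set $\{u=0\}$ may lie in the annulus $B_r\setminus B_{3r/4}$. In fact no uniform bound $v\geq cMr$ can hold on all of $B_r$: on $\partial B_r$ one has $v=u$, and $u$ may vanish there, so $v$ is forced to be small nearby. Your Poincar\'e argument therefore yields at best
\[
\int_{B_r(x_0)}|\nabla(u-v)|^{p(x)}\,dx \;\geq\; C\,M^{(1-\ep)p_-}\,\big|\{u=0\}\cap B_{r'}(x_0)\big|
\]
for some fixed $r'<r$, which is strictly weaker than the statement and does not suffice for the application in Lemma~\ref{dx}.

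The paper's proof confronts exactly this boundary degeneration. After rescaling to $B_1$, Harnack indeed gives $v\geq cM$ on $B_{3/4}$; on the annulus $B_1\setminus B_{3/4}$ one instead compares $v$ with the explicit barrier $w(x)=\theta M(e^{-\mu|x|^2}-e^{-\mu})$ of Lemma~\ref{exp}, choosing $\mu$ proportional to $\delta_0|\log M|$ (this is where the Lipschitz bound on $p$ enters), to obtain the \emph{degenerate} lower bound $v(x)\geq M^{1-\ep}(1-|x|)$, with linear vanishing at $\partial B_1$ --- the best possible. This degenerate bound is then fed into the Alt--Caffarelli radial-integration device: for each unit direction $\xi$ and each shift $|z|\le\tfrac12$ one has $v_z(r_\xi\xi)\ge(1-r_\xi)M^{1-\ep}$ at the first zero $r_\xi$ of $u_z$ along the ray, and integrating $|\nabla(u_z-v_z)|$ from $r_\xi$ to $1$ together with Young's inequality recovers the factor $(1-r_\xi)M^{(1-\ep)p_-}$. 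Averaging over $\xi\in\partial B_1$ and over $|z|\le\tfrac12$ converts the ray integrals into the full measure $|\{u=0\}\cap B_1|$. It is precisely this $z$-averaging trick from \cite{AC} that captures zeros near $\partial B_1$; a plain Poincar\'e inequality cannot see them.
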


\begin{proof}
First observe that if we take $u_r(x)=\frac{1}{r}u(x_0+rx)$, $v_r(x)=\frac{1}{r}v(x_0+rx)$  and $p_r(x)=p(x_0+rx)$
then,
\begin{align*}
& \int_{B_1} |\nabla (u_r-v_r)|^{p_r(x)}\, dx =r^{-N}
\int_{B_r(x_0)} |\nabla (u-v)|^{p(x)}\, dy,\\
& |B_1\cap \{u_r=0\}|=r^{-N}
|B_r(x_0)\cap \{u=0\}|, \\
& \sup_{B_{3/4}(0)} u_r=\frac{1}{r}\sup_{B_{\frac34\,r}(x_0)} u,
\end{align*}
and $\di\|\nabla p_r(x)\|_{\infty}=r\|\nabla
p(x_0+rx)\|_{\infty}$. Since $\|\nabla p_r(x)\|$ is small, if $r$
is small, we will assume that $r=1$ and $\|\nabla
p\|_{L^{\infty}(B_1)}\leq \delta$ with $\delta$ as small as needed (by taking $r_0$ small enough).

 So that, from now on we
assume that $x_0=0$ and $r=1$.

 For $|z|\le \frac12$ we consider the
change of variables from $B_1$ into itself such that $z$ becomes
the new origin. We call $u_z(x) = u \big( (1-|x|)z + x \big)$,
$v_z(x) = v\big((1-|x|)z+x\big)$, $p_z(x) =
p\big((1-|x|)z+x\big)$. Observe that this change of variables
leaves the boundary fixed. Define,
$$
r_\xi = \inf\Big\{ r \,/\, \frac18\le r\le 1\quad\mbox{and}\quad
u_z(r\xi)=0\Big\},
$$
if this set is nonempty.

Now, for almost every $\xi\in \partial B_1$ we have
\begin{equation}\label{cota-sup}
v_z(r_\xi \xi) = \int_{r_\xi}^1 \frac{d}{dr}(u_z-v_z)(r\xi)\,
dr\le \int_{r_\xi}^1 |\nabla(u_z-v_z)(r\xi)|\, dr.
\end{equation}

Let us assume that the following inequality holds:  There exist
$\delta_0>0$ and $M_\ep$ such that if $\delta\leq \delta_0$ and $M\geq M_{\ep}$,
\begin{equation}\label{v_z}
v_z(r_\xi \xi)\ge (1-r_\xi)M^{1-\ep}
\end{equation}
where $M=\sup_{B_{3/4}(0)} u.$

Let  $B=M^{1-\ep} $. Then, for any $\eta>0$ we have,
\begin{equation}\label{moco}
v_z(r_\xi \xi) \leq \int_{r_\xi}^1
\frac{|\nabla(u_z-v_z)(r\xi)|^{p_z(r\xi)}}{(\eta
B)^{p_z(r\xi)-1}p_z(r\xi)}\, dr+\int_{r_\xi}^1\frac{p_z(r\xi)-1}
{p_z(r\xi)} B\eta\, dr,
\end{equation}
and using \eqref{v_z} and \eqref{moco} with $\eta$ small we have,
\begin{equation}\label{moco2}
 \int_{r_\xi}^1 \frac{|\nabla(u_z-v_z)(r\xi)|^{p_z(r\xi)}}{
(B)^{ p_z(r\xi)-1}p_z(r\xi)}\, dr\geq C(p_+,p_-) (1-r_{\xi}) B.
\end{equation}
Therefore,
\begin{equation}\label{moco3}
 \int_{r_\xi}^1 |\nabla(u_z-v_z)(r\xi)|^{p_z(r\xi)}\, dr\geq C(p_+, p_-) (1-r_{\xi})
B^{p_-}
\end{equation}

 Then, using \eqref{moco3}, integrating
first over $\partial B_1$ and then over $|z|\leq 1/2$ we obtain as
in \cite{AC},
$$
\int_{B_1} |\nabla (u-v)|^{p(x)}\, dx \geq C |B_1 \cap \{u=0\}|
B^{p_-}.
$$

So we have the desired result.

\smallskip

Therefore, we only have to prove \eqref{v_z}. Observe that, since
$|z|\leq 1/2$, it is enough to prove that $v(x)\geq  M^{1-\ep}
(1-|x|)$ if $M$ is large enough.

If $|x|\leq 3/4$, by Remark
\ref{harna2} we have
$$v(x)\geq C_1 (\sup_{B_{3/4}(0)} v-3/4)\geq C_1
 (\sup_{B_{3/4}(0)} u-3/4)= C_1 (M-3/4)\geq \frac{C_1}{2} M$$
if $M\geq 2$, with $C_1$ depending on $p_{min},p_{max},L$ and $\|v\|_{\infty}$ (the bound
of $v$ before rescaling, see Remark \ref{harna2}, that equals the bound of $u$ before rescaling).

If $|x|\ge 3/4$ we prove by a comparison argument that
inequality \eqref{v_z} also holds. In fact, we know
$$
{v}\ge C_2 M \mbox{ in }\overline{B_{3/4}}.
$$
Take $w(x) = \theta M(e^{-\mu|x|^2} - e^{-\mu})$, where $\theta$
is such that $w\leq v$ on $\partial B_{3/4}$. Let $\mu_0$ and
$\ep_0$ as in  Lemma \ref{exp}. Then if $\mu\geq \mu_0$ and
$\delta\leq \ep_0$, there holds that
$$
\begin{cases}
C(\mu,\theta,M,p)\Delta_{p(x)}w\geq \bar{C}_1 (\mu-\bar{C}_2
\|\nabla p\|_{\infty}
|\log (\theta M)|)\geq \bar{C}_1 (\mu-\bar{C}_2 \delta |\log (\theta M)|)  & \mbox{in } B_1\setminus B_{3/4},\\
w \le C_2 \displaystyle M& \mbox{on } \partial B_{3/4},\\
w=0 & \mbox{on } \partial B_1.
\end{cases}
$$
Recall that $\theta=\di\frac{C_2}{e^{-\mu 9/16}-e^{-\mu}}$. Thus,
if $\mu$ is  large enough (depending on $C_2$), there holds that  $|\log \theta|\leq 2\mu$.

Therefore,
$$C(\mu,\theta,M,p)\Delta_{p(x)}w_{\mu}\geq \bar{C}_1 (\mu-\bar{C}_2 2\delta
\mu-\bar{C}_2  \delta |\log M|).$$
Let $\delta_0>0$. Assume further that
$\delta\le \delta_0<\frac{1}{4} (\bar{C}_2 )^{-1}$ and take $\mu=2
\bar{C}_2\delta_0 |\log M|$. Then,

\[
\begin{cases}
\Delta_{p(x)}w\geq 0  & \mbox{in } B_1\setminus B_{3/4},\\
w \le C_2 \displaystyle M& \mbox{on } \partial B_{3/4},\\
w=0 & \mbox{on } \partial B_1.
\end{cases}
\]

Thus, $w\leq v$ in $B_1\setminus\overline{B_{3/4}}$. Now,
$$
\begin{aligned}  w &\ge \theta e^{-\mu} \mu(1-|x|) M
\geq C_2 M e^{-7/16 \mu} \mu(1-|x|)\\
& \geq C_3 M^{1-C\delta_0} (1-|x|)
\delta_0 \log M= C_3 M^{1-C\delta_0} (1-|x|) \log M^{\delta_0}
\end{aligned}
$$
if $M\geq 1$.

Given $\ep>0$, assume further that $C\delta_0\leq
\ep$ and then, that $M$ is large enough (that is $M\geq M_{\ep}$) so that $C_3 \log M^{\delta_0}\geq 1$. Then,
$$
w\geq  M^{1-\ep} (1-|x|) \quad \mbox{ in }
B_1\setminus\overline{B_{3/4}},
$$
so that
$$
v\geq  M^{1-\ep} (1-|x|)\quad \mbox{ in }
B_1\setminus\overline{B_{3/4}}.
$$

Recalling the estimate inside the ball $B_{3/4}$ we get, as $M\ge1$,
$$v\geq C_2 M\geq  M^{1-\ep} (1-|x|) \quad \mbox{in } B_{3/4},$$
if $M$ is large enough, and \eqref{v_z} is proved.
\end{proof}

\begin{lema}\label{dx}
Let $p\in Lip(\Omega)$, $p\geq 2$ in $\Omega$, There exist
 $r_0=r_0(p_{max},p_{min},L,\|u\|_\infty)$ and $C_{max}=C_{max}(p_{max},p_{min},L,\lambda_2,M)$  such that if $r\leq r_0$, each
 local minimizer $u$ with $\|u\|_\infty\le M$ has the following property: If $B_{\frac34\,r}(x_0)\subset\subset\Omega$,
$$ \frac{1}{r} \sup_{B_{\frac 34\, r}(x_0)} u\geq C_{max} \quad \mbox{implies }\quad
\Delta_{p(x)}u=0 \quad \mbox{ in } B_r(x_0)$$
\end{lema}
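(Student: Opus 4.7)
The plan is to argue by contradiction/contrapositive, comparing $u$ to its $p(x)$-harmonic replacement $v$ in $B_r(x_0)$ and combining the minimality of $u$ with the lower bound provided by Lemma \ref{gamapromedio}. Let $v$ solve $\Delta_{p(x)}v=0$ in $B_r(x_0)$ with $v-u\in W^{1,p(\cdot)}_0(B_r(x_0))$. Since $u$ is $p(x)$-subharmonic (Lemma \ref{subsol}), comparison yields $0\le u\le v$, so $\{u>0\}\subset \{v>0\}$ and therefore
\begin{equation*}
\int_{B_r(x_0)}\lambda(x)\bigl(\chi_{\{v>0\}}-\chi_{\{u>0\}}\bigr)\,dx\le \lambda_2\,\bigl|B_r(x_0)\cap\{u=0\}\bigr|.
\end{equation*}

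Using $\J(u)\le \J(v)$ and rearranging, this gives
\begin{equation*}
\int_{B_r(x_0)}\frac{|\nabla u|^{p(x)}}{p(x)}-\frac{|\nabla v|^{p(x)}}{p(x)}\,dx \le \lambda_2\,\bigl|B_r(x_0)\cap\{u=0\}\bigr|.
\end{equation*}
Here is where the hypothesis $p(x)\ge2$ enters decisively: by the elementary inequality \eqref{a1a22}, applied with $A_1=\emptyset$ and $A_2=B_r(x_0)$, the left-hand side is bounded below by $C\int_{B_r(x_0)}|\nabla(u-v)|^{p(x)}\,dx$ with $C=C(p_{min},p_{max},N)$, without any degenerate weight of the form $(|\nabla u|+|\nabla v|)^{p-2}$. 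Thus
\begin{equation*}
C\int_{B_r(x_0)} |\nabla(u-v)|^{p(x)}\,dx \le \lambda_2\,\bigl|B_r(x_0)\cap\{u=0\}\bigr|.
\end{equation*}

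Now I invoke Lemma \ref{gamapromedio}: fix any $\ep\in(0,1)$ (for instance $\ep=1/2$) and let $r_0$ and $M_\ep$ be as provided by that lemma. Assume $r\le r_0$ and that
\[
M:=\frac1r\sup_{B_{\frac34 r}(x_0)}u\ge M_\ep.
\]
Then Lemma \ref{gamapromedio} gives
\begin{equation*}
\int_{B_r(x_0)}|\nabla(u-v)|^{p(x)}\,dx \ge C'\,\bigl|B_r(x_0)\cap\{u=0\}\bigr|\,M^{(1-\ep)p_-}.
\end{equation*}
Combining the two bounds,
\begin{equation*}
C'\,M^{(1-\ep)p_-}\,\bigl|B_r(x_0)\cap\{u=0\}\bigr|\le \frac{\lambda_2}{C}\,\bigl|B_r(x_0)\cap\{u=0\}\bigr|.
\end{equation*}

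Choose $C_{max}$ large enough, depending only on $p_{max},p_{min},L,\lambda_2,M$ (and large enough that also $C_{max}\ge M_\ep$), so that $M\ge C_{max}$ forces $C'M^{(1-\ep)p_-}>\lambda_2/C$. Under the hypothesis $M\ge C_{max}$, the previous inequality then forces $|B_r(x_0)\cap\{u=0\}|=0$. Going back to the energy comparison, this yields $\int_{B_r(x_0)}|\nabla(u-v)|^{p(x)}\,dx=0$, hence $\nabla(u-v)=0$ a.e., and since $u-v\in W^{1,p(\cdot)}_0(B_r(x_0))$ we obtain $u\equiv v$ in $B_r(x_0)$. Consequently $\Delta_{p(x)}u=0$ in $B_r(x_0)$. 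The main obstacle is already packaged into Lemma \ref{gamapromedio}; the substantive use of the hypothesis $p(x)\ge2$ here is exactly in turning the energy defect into a clean $L^{p(x)}$ norm of $\nabla(u-v)$ via \eqref{a1a22}, which is precisely the step that fails for subquadratic $p(x)$.
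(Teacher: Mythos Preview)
Your proof is correct and follows essentially the same approach as the paper's: compare $u$ with its $p(x)$-harmonic replacement $v$, use minimality together with the monotonicity inequality (valid in the clean form because $p(x)\ge 2$) to bound $\int_{B_r}|\nabla(u-v)|^{p(x)}$ by $\lambda_2|B_r\cap\{u=0\}|$, and then invoke Lemma~\ref{gamapromedio} to force $|B_r\cap\{u=0\}|=0$ when $M$ is large. Your write-up is in fact slightly more explicit than the paper's in justifying $\{u>0\}\subset\{v>0\}$ via the comparison principle and Lemma~\ref{subsol}.
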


\begin{proof}
Take $v$ as in the previous lemma. By a standard inequality we
have if $p(x)\geq 2$ (see \cite{DP}),
$$
\begin{aligned}
\lambda_2 |B_r(x_0)\cap\{u=0\}|&\geq \lambda_2 (|B_r(x_0)\cap\{v>0\}|-|B_r(x_0)\cap\{u>0\}|)\geq\\
 &\ge\int_{B_r(x_0)} \frac{|\nabla u|^{p(x)}}{p}-
\frac{|\nabla v|^{p(x)}}{p}\, dx\geq C \int_{B_r(x_0)} |\nabla u-\nabla
v|^{p(x)}\, dx.
\end{aligned}
$$

If $C_{max}$ is large enough,   by the previous lemma we get,
$$|\{u=0\}\cap B_r(x_0)\}|\geq C|\{u=0\}\cap B_r(x_0)\}|
\Big(\frac{1}{r} \sup_{B_{3/4 r}(x_0)}u\Big)^{p_-(1-\ep)}.$$ Therefore, if
$C_{max}$ is big enough we have that $|\{u=0\}\cap B_r\}|=0$ and
we obtain the desired result since
$$\int_{B_r} |\nabla u-\nabla v|^{p(x)}\, dx\leq C |\{u=0\}\cap B_r\}|$$
so that, $u=v$ in $B_r$.
\end{proof}

\begin{lema}\label{prom1}

Let $p\in Lip(\Omega)$. For any $0<\kappa< 1$ There exist $r_0$, $c_{\kappa}>0$ such that if $r\leq r_0$ and $B_r(x_0)\subset\subset\Omega$,
$$ \frac{1}{r} \sup_{B_{r}(x_0)} u\leq c_{\kappa} \quad \mbox{implies }\quad
u=0 \quad \mbox{ in } B_{\kappa r}(x_0).$$

Here $r_0$ depends on $\kappa,p_{min},p_{max}, L$  and $N$ and $c_{\kappa}$ depends also on
 $\lambda_1$.
\end{lema}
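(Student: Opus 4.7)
The plan is to first reduce to $B_1$ by the rescaling $u_r(y) = u(x_0 + ry)/r$ used in Lemma~\ref{gamapromedio}, so the hypothesis reads $M := \sup_{B_1} u_r \le c_\kappa$. Since the rescaled exponent $p_r(y) = p(x_0 + ry)$ has Lipschitz constant $rL \le r_0 L$, we can keep $\|\nabla p_r\|_\infty$ as small as we wish. The goal is then to show $|\{u_r > 0\} \cap B_\kappa| = 0$: by the continuity of minimizers (Theorem~\ref{cotaalpha}) the open set $\{u_r > 0\} \cap B_\kappa$ has vanishing measure iff it is empty, and then $u_r \ge 0$ (Lemma~\ref{princ-max}) yields $u_r \equiv 0$ in $B_\kappa$, which rescales to the conclusion.

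Construct a radial barrier $\phi(x) = \Psi(|x|)$ on the annulus $A := B_1 \setminus \overline{B_\kappa}$, modelled on the classical $p$-harmonic profile $\Psi(s) = A(\kappa^\beta - s^\beta)$ with $\beta = (p-N)/(p-1)$, normalized so that $\Psi(\kappa) = 0$ and $\Psi(1) = M$, and extended by $0$ in $\overline{B_\kappa}$. A direct computation as in the proof of Lemma~\ref{gamapromedio} shows that for variable $p(x)$ the corrections involving $\nabla p$ are of order $Lr_0$, so $\phi$ remains $p(x)$-superharmonic in $A$ once $r_0$ is small; moreover $\Psi'(\kappa) \le C_\kappa M$, where $C_\kappa$ depends only on $\kappa, N, p_{min}, p_{max}, L$. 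Take the competitor $v = \min(u_r, \phi) \in \K$: since $\{v > 0\} = \{u_r > 0\} \setminus \overline{B_\kappa}$, the minimality $\J(u_r) \le \J(v)$ combined with the convexity inequality $|\xi|^{p(x)}/p(x) - |\eta|^{p(x)}/p(x) \ge |\eta|^{p(x)-2}\eta\cdot(\xi - \eta)$ yields
\[
\lambda_1\, V + \tfrac{1}{p_+}\int_{B_\kappa \cap \{u_r > 0\}} |\nabla u_r|^{p(x)}\, dx \;\le\; -\int_A |\nabla\phi|^{p(x)-2}\nabla\phi\cdot\nabla(u_r-\phi)^+\, dx,
\]
where $V := |\{u_r > 0\} \cap B_\kappa|$. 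Since $(u_r - \phi)^+$ does not vanish on $\partial B_\kappa$, testing the weak $p(x)$-harmonicity of $\phi$ in $A$ against $(u_r-\phi)^+\eta_\varepsilon$ (with $\eta_\varepsilon$ a cutoff vanishing in an $\varepsilon$-neighborhood of $\partial B_\kappa$) and letting $\varepsilon \to 0$ converts the right-hand side to the boundary integral $\int_{\partial B_\kappa} \Psi'(\kappa)^{p(x)-1}\, u_r\, d\H$.

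Combining this with the trace estimate $\int_{\partial B_\kappa} u_r \le C(\|u_r\|_{L^1(B_\kappa)} + \|\nabla u_r\|_{L^1(B_\kappa)}) \le C\bigl(MV + V^{(p-1)/p}\,I^{1/p}\bigr)$, where $I := \int_{B_\kappa \cap \{u_r>0\}} |\nabla u_r|^{p(x)}$ (the $L^1$ bounds use $u_r \le M$ on the support and H\"older in its variable-exponent form from Appendix~\ref{appA1}), together with the bound $\Psi'(\kappa)^{p(x)-1} \le (C_\kappa M)^{p_{min}-1}$ valid once $C_\kappa M \le 1$, and an $\varepsilon$-Young inequality to absorb the $I^{1/p}$ term, one obtains
\[
\bigl(\lambda_1 - C_\kappa' M^{p_{min}}\bigr)\, V + \tfrac{1}{2p_+}\, I \;\le\; 0.
\]
Choosing $c_\kappa > 0$ small enough that $C_\kappa'\, c_\kappa^{p_{min}} < \lambda_1$ forces both nonnegative terms on the left-hand side to vanish; in particular $V = 0$, completing the proof. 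The main technical obstacles are two: first, constructing the $p(x)$-superharmonic radial barrier for the variable-exponent operator---here the Lipschitz regularity of $p$ and the smallness of $r_0$ are essential, since the classical radial $p$-harmonic formula is only an approximate solution of $\Delta_{p(x)}\phi = 0$ and one must quantify the perturbation; second, the integration by parts producing the boundary term is delicate because the natural test function $(u_r - \phi)^+$ does not vanish on the inner sphere $\partial B_\kappa$, which is why the cutoff approximation is needed. It is precisely this boundary term that the trace inequality and Young argument absorb, upgrading the energy estimate from a mere measure bound $V \lesssim M^{p_{min}}$ to the pointwise vanishing $V = 0$.
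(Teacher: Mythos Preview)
Your overall architecture---rescale to $B_1$, build a radial supersolution $\phi$ on an annulus with $\phi=0$ on the inner sphere and $\phi\ge u_r$ on the outer sphere, use $\min(u_r,\phi)$ as a competitor, convert the energy inequality into a boundary integral on $\partial B_\kappa$, and close via the trace inequality---is exactly the paper's strategy. The paper works on the annulus $B_{\sqrt\kappa}\setminus B_\kappa$ rather than $B_1\setminus B_\kappa$, but that is cosmetic.

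The genuine gap is in your barrier construction. You take $\Psi(s)=A(\kappa^\beta-s^\beta)$ with a \emph{fixed} exponent $\beta=(p_0-N)/(p_0-1)$ and claim that ``the corrections involving $\nabla p$ are of order $Lr_0$, so $\phi$ remains $p(x)$-superharmonic once $r_0$ is small.'' This is not correct. Writing $\Delta_{p(x)}\phi$ in nondivergence form (Remark~\ref{nodiver}) for a radial $\phi=\Psi(|x|)$ gives
\[
\Delta_{p(x)}\phi=\Psi'^{\,p-1}\Big[\tfrac{1}{s}\big((p(x)-1)(\beta-1)+N-1\big)+\log\Psi'\cdot\partial_r p\Big].
\]
The first bracket is $\le 0$ (for $\beta\le\frac{p_--N}{p_--1}$) but only of order $r_0$ after rescaling, while the second term is of order $Lr_0\,|\log\Psi'|\sim Lr_0\,|\log M|$ because $\Psi'\sim C_\kappa M$ on the annulus. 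Since $|\log M|\to\infty$ as $M\to 0$, the perturbation dominates and $\Delta_{p(x)}\phi$ need not be $\le 0$; no choice of $r_0$ independent of $M$ (hence of $\lambda_1$, as the lemma requires) can cure this. Consequently your gradient bound $\Psi'(\kappa)\le C_\kappa M$ with $C_\kappa$ independent of $M$ is unattainable together with the supersolution property.

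The paper resolves this by choosing the barrier's shape to depend on $\epsilon=\sup u$: it uses $v=\frac{\epsilon}{c_\mu}(e^{-\mu|x|^2}-e^{-\mu\kappa^2})$ with $\mu=\kappa^{-1}|\log\epsilon|$, and Lemma~\ref{exp} then gives $\Delta_{p(x)}v<0$. The price is that the inner gradient is only $|\nabla v|\big|_{|x|=\kappa}\le 4\epsilon|\log\epsilon|$ rather than $C\epsilon$, but this is harmless: the closing inequality becomes
\[
\int_{B_\kappa}\tfrac{|\nabla u|^{p}}{p}+\lambda_1|\{u>0\}\cap B_\kappa|\le C(\epsilon|\log\epsilon|)^{p_--1}\Big(\int_{B_\kappa}\tfrac{|\nabla u|^{p}}{p}+\lambda_1|\{u>0\}\cap B_\kappa|\Big),
\]
and $(\epsilon|\log\epsilon|)^{p_--1}\to 0$ still forces both sides to vanish. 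Your argument can be repaired in the same spirit (e.g.\ by letting $\beta\sim -C|\log M|$), but as written the supersolution claim fails.
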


\begin{proof}

 We may suppose that $r=1$ and that $B_r$ is
centered at zero, (if not, we take the rescaled function
$\tilde{u}=\frac{u(x_0+r x)}{r}$). Moreover, by taking $r\leq r_0$
we may assume that $\|\nabla p\|_{L^{\infty}(B_1)}\leq \delta$.

Let $\ep:=\sup_{B_{\sqrt{\kappa}}} u .$  Choose $v$ as
$$v=\begin{cases} \frac{\ep}{c_{\mu}} (e^{-\mu |x|^2}-e^{-\mu
{\kappa}^2}) \quad &\mbox{ in } B_{\sqrt{\kappa}}\setminus
B_{\kappa},\\
0 \quad &\mbox{ in } B_{\kappa}, \end{cases}$$ where
$c_{\mu}=(e^{-\mu \kappa}-e^{-\mu \kappa^2})<0$.

 By  Lemma
\ref{exp} we have if $\mu$ is large enough,
\begin{align*}
\frac{-c_{\mu}}{\ep \mu} e^{\mu|x|^2}|\nabla v|^{2-p}\Delta_{p(x)}(-v)&\geq
C_1(\mu-C_2\|\nabla p\|_{\infty}\big|\log
\frac{\ep}{-c_\mu}\big|)\\&\ge C_1(\mu-C_2\|\nabla p\|_{\infty}|\log
{(-c_\mu)}|-C_2 \|\nabla p\|_{\infty} |\log \ep|).
\end{align*}
If $\mu\geq \frac{\log 2}{(\kappa(1-\kappa))}$ we have,
$$e^{-\mu\kappa}\leq e^{-\mu \kappa}(e^{-\mu \kappa(\kappa-1)}-1)=e^{-\mu \kappa^2}-e^{-\mu \kappa}= -c_{\mu}\leq
e^{-\mu\kappa^2}<1.$$

Then, $0>\log(-c_{\mu})\geq -\mu \kappa$.
Therefore,
\begin{align*}
\frac{-c_{\mu}}{\ep \mu} e^{\mu|x|^2}|\nabla v|^{2-p}\Delta_{p(x)}(-v)&\geq
C_1\big((1-C_2\kappa\|\nabla p\|_{\infty} ) \mu-C_2 \|\nabla
p\|_{\infty}|\log \ep|\big) .
\end{align*}
If $\delta\leq \frac{1}{2C_2\kappa}$ we have,
\begin{align*}
\frac{-c_{\mu}}{\ep \mu} e^{\mu|x|^2}|\nabla v|^{2-p}\Delta_{p(x)}(-v)&\geq
C_1\Big(\frac \mu2 -C_2\delta |\log \ep| \Big)\geq 0 ,
\end{align*}
if we choose $\mu\geq 2C_2 \delta |\log \ep|$.

Hence, if $r\leq r_0:= \frac{1}{2LC_2\kappa}$ so that $\|\nabla
p\|_{\infty}\leq \delta_0=\frac{1}{2C_2\kappa}$, and we take $\mu=2C_2 \delta_0
|\log \ep|=\kappa^{-1}|\log \ep|,$
$$\Delta_{p(x)}v<0 \quad \mbox{ in } B_{\sqrt{\kappa}}\setminus
\overline{B}_{\kappa}.$$

By construction $v\geq u$ on $\partial B_{\sqrt
\kappa}$. Thus, if we take
$$w=
\begin{cases}
\min(u,v)\quad&\mbox{in}\quad B_{\sqrt\kappa},\\
u&\mbox{in}\quad \Omega\setminus B_{\sqrt\kappa},
\end{cases}
$$
we find that $w$ is an admissible function for the minimizing problem. Thus,
using the convexity  we find that
$$
\begin{aligned}\di
&\int_{B_\kappa}\frac{|\nabla u|^{p(x)}}{p(x)}+\lambda\chi_{B_\kappa\cap\{u>0\}}\, dx\\
&\hskip1cm =\di \mathcal{J}(u)- \int_{\Omega\setminus
B_\kappa}\frac{|\nabla u|^{p(x)}}{p(x)} \,
dx+\int_{\Omega}(\di\lambda\chi_{B_\kappa\cap\{u>0\}}
 -\lambda\chi_{\Omega\cap\{u>0\}})\, dx\\
 &\hskip 1cm \le \mathcal{J}(w)-
 \int_{\Omega\setminus B_\kappa}\frac{|\nabla u|^{p(x)}}{p(x)} \, dx+\int_{\Omega}(\lambda\chi_{B_\kappa\cap\{u>0\}}
 -\lambda\chi_{\Omega\cap\{u>0\}})\, dx\\
&\hskip1cm\leq\int_{B_{\sqrt\kappa}\setminus B_\kappa} \frac{|\nabla
w|^{p(x)}}{p(x)}\, dx-\int_{B_{\sqrt\kappa}\setminus
B_\kappa}\frac{|\nabla u|^{p(x)}}{p(x)}\, dx
\\
 &\hskip1cm\le \int_{B_{\sqrt\kappa}\setminus B_\kappa}  |\nabla w|^{p(x)-2}  {\nabla w}
 (\nabla w-\nabla u)\,dx=- \int_{B_{\sqrt\kappa}\setminus B_\kappa} |\nabla w|^{p(x)-2} {\nabla w}
 \nabla (u-v)^+\,dx\\
 &\hskip1cm =- \int_{(B_{\sqrt\kappa}\setminus B_\kappa)}  |\nabla
 v|^{p(x)-2}
   {\nabla v}
 \nabla (u-v)^+\,dx
\end{aligned}
$$
and as $v$ is a classical supersolution we have,
$$\begin{aligned}
&\int_{B_\kappa}\frac{|\nabla
u|^{p(x)}}{p(x)}+\lambda\chi_{B_\kappa\cap\{u>0\}}\, dx\le
\int_{\partial B_{\kappa}}|\nabla v|^{p(x)-1}
 u \,
 \,d\mathcal{H}^{N-1}.
\end{aligned}$$
On the other hand, if $\mu\geq \frac{\log 2}{\kappa (1-\kappa)}$,
then $1-e^{-\mu \kappa(1-\kappa)}\geq 1/2$ and therefore,
$v$ satisfies
$$|\nabla v|_{|x|=\kappa}= \frac{2\kappa\ep\mu e^{-\mu \kappa^2}}{e^{-\mu\kappa^2}-e^{-\mu\kappa}}
=\frac{2\ep \kappa\mu}{1-e^{-\mu \kappa(1-\kappa)}}\leq 4\kappa \mu \ep=4\ep|\log \ep| $$

 Thus,
$$\begin{aligned}
&\int_{B_\kappa}\frac{|\nabla
u|^{p(x)}}{p(x)}+\lambda\chi_{B_\kappa\cap\{u>0\}}\, dx\le C(p) (\ep
|\log\ep|)^{p_--1} \int_{\partial B_{\kappa}}u \,d\mathcal{H}^{N-1}.
\end{aligned}$$
By Sobolev's trace inequality we have,
$$
\begin{aligned}
&\int_{\partial B_\kappa}u\le C(N,\kappa)\int_{B_\kappa}|\nabla u|+u\, dx\\
&\hskip1cm \le C(N,p,\kappa) \Big(\int_{B_\kappa} |\nabla
u|^{p(x)}+|{B_\kappa\cap\{u>0\}}|+\int_{B_\kappa}u\, dx
\Big)\\&\hskip1cm
 \le C(N,\kappa,p,\lambda_1) (1+\ep)\Big(\int_{B_\kappa}\frac{|\nabla
u|^{p(x)}}{p(x)}+\lambda_1 |\{u>0\}\cap B_\kappa| \Big)
\end{aligned}
$$
where in the last inequality we are using that $\int_{B_\kappa}u\,
dx\leq \ep |\{u>0\}\cap B_{\kappa}|$. Therefore,
$$\int_{B_\kappa}\frac{|\nabla u|^{p(x)}}{p(x)}\,
dx+\lambda_1|B_\kappa\cap\{u>0\}|\le  C (\ep
|\log\ep|)^{p_--1} \Big(\int_{B_\kappa}\frac{|\nabla
u|^{p(x)}}{p(x)}\, dx+\lambda_1|B_\kappa\cap\{u>0\}|\Big),$$ where
$C=C(N,\kappa,p,\lambda_1).$

 So that, if $\ep$ is small enough
$$\int_{B_\kappa}\frac{|\nabla u|^{p(x)}}{p(x)}\,
dx+\lambda_1|B_\kappa\cap\{u>0\}|=0.$$

In particular, $u=0$ in $B_{\kappa}$
and the result follows.
\end{proof}

As a corollary we have,

\begin{corol} \label{prom12} Let $p\in Lip(\Omega)$. There exist $r_0$, $C_{min}>0$ such that if $r\leq r_0$ and $B_r(x_0)\subset\subset\Omega$,
$$ \frac{1}{r} \sup_{B_{\frac34 r}(x_0)} u\leq C_{min} \quad \mbox{implies }\quad
u=0 \quad \mbox{ in } B_{r/2}(x_0).$$

Here $r_0$ depends on
$p_{min},p_{max}, L$  and $N$ and $C_{min}$ depends also on
 $\lambda_1$.
\end{corol}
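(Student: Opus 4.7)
The plan is to deduce this corollary directly from Lemma \ref{prom1} by applying it on a slightly smaller ball with a carefully chosen $\kappa$. The statement of Lemma \ref{prom1} controls $u$ in $B_{\kappa r}$ from a hypothesis on $\sup_{B_r} u$, while the corollary wants to control $u$ in $B_{r/2}$ from a hypothesis on $\sup_{B_{3r/4}} u$. Setting $\kappa = 2/3$ and applying the lemma on the ball of radius $3r/4$ gives exactly the right nested balls, since $\kappa \cdot (3r/4) = r/2$.

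Concretely, first I would fix $\kappa_0 = 2/3$ and let $\tilde r_0 = \tilde r_0(\kappa_0, p_{min}, p_{max}, L, N)$ and $c_{\kappa_0} = c_{\kappa_0}(\kappa_0, p_{min}, p_{max}, L, N, \lambda_1)$ be the constants provided by Lemma \ref{prom1}. Then I would set
\begin{equation*}
r_0 := \frac{4}{3}\,\tilde r_0, \qquad C_{min} := \frac{3}{4}\, c_{\kappa_0}.
\end{equation*}
Given any $r \leq r_0$ with $B_r(x_0) \subset\subset \Omega$, the ball $B_{3r/4}(x_0)$ is also compactly contained in $\Omega$ and has radius $3r/4 \leq \tilde r_0$, so Lemma \ref{prom1} is applicable on $B_{3r/4}(x_0)$ with parameter $\kappa_0 = 2/3$.

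The hypothesis $\frac{1}{r}\sup_{B_{3r/4}(x_0)} u \leq C_{min}$ rewrites as
\begin{equation*}
\frac{1}{3r/4}\sup_{B_{3r/4}(x_0)} u \;\leq\; \frac{4}{3}\,C_{min} \;=\; c_{\kappa_0},
\end{equation*}
which is exactly the hypothesis of Lemma \ref{prom1} applied to $B_{3r/4}(x_0)$. Therefore $u \equiv 0$ on $B_{\kappa_0 \cdot 3r/4}(x_0) = B_{r/2}(x_0)$, which is the desired conclusion. The dependence of $r_0$ on $p_{min}, p_{max}, L, N$ and of $C_{min}$ also on $\lambda_1$ is inherited from Lemma \ref{prom1}.

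There is essentially no obstacle here: the content is entirely in the preceding lemma, and the corollary is just a convenient rescaling/repackaging that replaces a geometric nesting $B_{\kappa r} \subset B_r$ by the nesting $B_{r/2} \subset B_{3r/4}$ which will be more natural in the subsequent sections (for instance when combined with Lemma \ref{dx} where the supremum is also taken on $B_{3r/4}$).
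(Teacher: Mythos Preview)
Your proposal is correct and is exactly the intended derivation: the paper states the corollary without proof, treating it as immediate from Lemma \ref{prom1}, and your choice $\kappa_0=2/3$ applied on $B_{3r/4}(x_0)$ is the natural way to unpack that implication.
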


Corollary \ref{prom12} states that, if $p$ is Lipschitz, then any minimizer is non-degenerate, i.e,

\begin{corol}\label{ucr}
Let $p\in Lip(\Omega)$. Let $D\subset\subset \Omega$, $x_0\in D\cap \partial\{u>0\}$. Then
$$\sup_{B_r(x_0)}u\geq C_{min}r,\quad \mbox{ if } r\leq r_0$$ where $C_{min}$ is the constant in
Corollary \ref{prom12} and $r_0$ depends also on $D$.
\end{corol}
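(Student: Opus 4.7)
The plan is to deduce this directly from Corollary \ref{prom12} by contradiction. Suppose, for a point $x_0 \in D \cap \partial\{u>0\}$ and some radius $r$, one had
$$\sup_{B_r(x_0)} u < C_{min}\, r.$$
I will set $r' = 4r/3$, so that $B_{3r'/4}(x_0) = B_r(x_0)$ and
$$\frac{1}{r'}\sup_{B_{3r'/4}(x_0)} u \;=\; \frac{3}{4r}\sup_{B_r(x_0)} u \;<\; \frac{3}{4}C_{min} \;<\; C_{min}.$$
Provided $r'$ falls within the scope of Corollary \ref{prom12} (i.e.\ $r'\le r_0^{\text{prom12}}$ and $B_{r'}(x_0)\subset\subset\Omega$), that corollary then forces $u \equiv 0$ on $B_{r'/2}(x_0) = B_{2r/3}(x_0)$. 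But $x_0 \in \partial\{u>0\}$, so every neighborhood of $x_0$ contains points where $u>0$, a contradiction.

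To guarantee the hypotheses of Corollary \ref{prom12}, I need to choose the ambient radius $r_0$ in the current statement small enough that $4r/3 \le r_0^{\text{prom12}}$ and that $B_{4r/3}(x_0)\subset\subset\Omega$ for every $x_0\in D$. The first condition fixes $r_0 \le \tfrac{3}{4}\,r_0^{\text{prom12}}$, where the right-hand side depends only on $p_{min},p_{max},L,N$. The second fixes $r_0 \le \tfrac{3}{4}\operatorname{dist}(D,\partial\Omega)$, which is where the extra dependence on $D$ enters, as stated in the corollary.

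There is essentially no obstacle: the work was already carried out in Lemma \ref{prom1} and its corollary, and the present statement is just a contrapositive formulation applied at a free boundary point, together with a trivial rescaling of the radius to convert the $B_{3r/4}$ supremum in Corollary \ref{prom12} into the $B_r$ supremum appearing here.
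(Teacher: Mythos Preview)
Your proof is correct and follows the same contrapositive route the paper has in mind (the paper states this corollary as an immediate restatement of Corollary~\ref{prom12} without further argument). Note that the rescaling $r'=4r/3$ is unnecessary: since $B_{3r/4}(x_0)\subset B_r(x_0)$, the assumption $\sup_{B_r(x_0)}u<C_{min}r$ already gives $\frac1r\sup_{B_{3r/4}(x_0)}u<C_{min}$, so Corollary~\ref{prom12} applies directly with radius $r$.
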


\begin{corol}\label{otrocota}
Let $p\in Lip(\Omega)$ and $p\geq 2$. Let $D\subset\subset \Omega$, $x_0\in D\cap \partial\{u>0\}$. Then
$$\sup_{B_{\frac34\,r}(x_0)}u\leq C_{max}r,\quad \mbox{ if } r\leq r_0$$ where $C_{max}$ is the constant in Lemma
\ref{dx} and $r_0$ depends also on $D$.
\end{corol}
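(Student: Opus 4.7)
The plan is to argue by contradiction via a blow-up, reducing the problem to the classical strong minimum principle for a constant-exponent $p$-Laplacian. Assume the conclusion fails: there is a sequence $r_k\to 0$ with $r_k\le r_0$ and
\[
M_k:=\sup_{B_{3r_k/4}(x_0)}u \;>\; C_{max}\,r_k.
\]
By Lemma~\ref{dx} this forces $\Delta_{p(x)}u=0$ in $B_{r_k}(x_0)$. Since $u$ is continuous (Theorem~\ref{cotaalpha}) and $x_0\in\partial\{u>0\}$, we have $u(x_0)=0$.

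Next I would rescale: set $v_k(y):=u(x_0+r_k y)/M_k$ on $B_1$. A direct computation shows $v_k$ solves a perturbed $p_k(y)$-Laplace equation with $p_k(y)=p(x_0+r_k y)$, the perturbation controlled by a term of order $r_k\|\nabla p\|_\infty|\log(M_k/r_k)|$. The H\"older continuity of $u$ gives $M_k\le Cr_k^\gamma$ for some $\gamma<1$, and combined with $M_k\ge C_{max}r_k$ this guarantees $r_k|\log(M_k/r_k)|\to 0$, so the perturbation vanishes. Moreover $v_k\ge 0$, $v_k(0)=0$, $\sup_{\overline{B_{3/4}}}v_k=1$, and the Lipschitz regularity of $p$ gives $p_k\to p_0:=p(x_0)$ uniformly on $\overline{B_1}$ with $\|\nabla p_k\|_\infty\le Lr_k\to 0$.

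The compactness step is the heart of the argument. Applying the nonhomogeneous Harnack inequality of Remark~\ref{harna2} at the origin (where $v_k(0)=0$), together with the vanishing of $\|\nabla p_k\|_\infty$, yields a uniform $L^\infty$ bound for $v_k$ on a ball slightly larger than $\overline{B_{3/4}}$, and interior $C^{1,\alpha}$ estimates for $p(x)$-harmonic functions then give uniform $C^{1,\alpha}$ bounds on $\overline{B_{3/4}}$. Passing to a subsequence, $v_k\to v_\infty$ in $C^1(\overline{B_{3/4}})$, and passing to the limit in the weak formulation (using strong gradient convergence and uniform convergence $p_k\to p_0$), the limit $v_\infty$ is $p_0$-harmonic in a neighborhood of the origin, nonnegative, and vanishes at $0$. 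The classical strong minimum principle of V\'azquez for the $p_0$-Laplacian (constant exponent $p_0\ge 2$) forces $v_\infty\equiv 0$, whence $\sup_{\overline{B_{3/4}}}v_\infty=0$. But uniform convergence gives $\sup_{\overline{B_{3/4}}}v_\infty=\lim_k\sup_{\overline{B_{3/4}}}v_k=1$, a contradiction.

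The main technical obstacle is the compactness step: without the Lipschitz regularity of $p$---which makes both the equation perturbation and $\|\nabla p_k\|_\infty$ vanish in the rescaling---and without the controlled-constant form of Harnack provided by Remark~\ref{harna2}, one cannot guarantee uniform $L^\infty$ control of $v_k$ beyond $\overline{B_{3/4}}$ and hence cannot extract a genuine $p_0$-harmonic limit. This is also the point at which the hypothesis $p\ge 2$ of Lemma~\ref{dx} enters, since without it the very existence of the constant $C_{max}$ forcing $p(x)$-harmonicity in $B_{r_k}(x_0)$ is not available to start the contradiction.
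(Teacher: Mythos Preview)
Your blow-up strategy is correct in spirit and provides a genuine alternative to the paper's argument, but it is considerably more elaborate and a couple of technical points need tightening.

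\medskip

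\textbf{Comparison with the paper.} The paper's proof is much shorter and does not blow up at all. For a \emph{single} radius $r\le r_0$ violating the bound, Lemma~\ref{dx} gives $\Delta_{p(x)}u=0$ in $B_r(x_0)$; the $C^{1,\alpha}$ regularity of \cite{AM} then yields $\nabla u\in C^\alpha$ near $x_0$ with $\nabla u(x_0)=0$ (since $u\ge0$ attains its minimum there), hence $u(x)\le C|x-x_0|^{1+\alpha}$ for $|x-x_0|\le \rho$. This immediately contradicts the nondegeneracy estimate $\sup_{B_\rho(x_0)}u\ge C_{min}\rho$ of Corollary~\ref{ucr} as $\rho\to0$. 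Thus the paper trades the strong minimum principle (unknown for variable exponents, as remarked in the introduction) for the nondegeneracy of minimizers, which is already available.

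\medskip

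\textbf{Issues in your argument.} The compactness step is not quite as you state it. Remark~\ref{harna2} and the $C^{1,\alpha}$ estimates of Theorem~\ref{regu} apply to genuinely $p_k$-harmonic functions; your $v_k=u(x_0+r_k\cdot)/M_k$ is not $p_k$-harmonic (as you yourself note via the perturbation), so you cannot invoke them for $v_k$ directly. The clean fix is to work with the \emph{homogeneous} rescaling $\bar u_k(y)=u(x_0+r_ky)/r_k$, which \emph{is} $p_k$-harmonic in $B_1$: then Remark~\ref{harna2} and a Harnack chain from the origin give a uniform bound $\sup_{B_{7/8}}\bar u_k\le C$, Cacciopoli (Lemma~\ref{caccio}) controls $\int_{B_{3/4}}|\nabla\bar u_k|^{p_k}$, and Theorem~\ref{regu} applied to $\bar u_k$ in $B_{7/8}$ gives uniform $C^{1,\alpha}$ bounds on $\overline{B_{3/4}}$. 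In particular the Harnack chain already shows $M_k/r_k\le C$, so $v_k$ and $\bar u_k$ differ by a bounded factor, the perturbation analysis becomes unnecessary, and your H\"older-continuity bound on $M_k$ is not needed either. After this correction your limit argument (convergence to a nonnegative $p_0$-harmonic $v_\infty$ with $v_\infty(0)=0$, strong minimum principle for the constant-exponent $p_0$-Laplacian, contradiction with $\sup_{\overline{B_{3/4}}}v_\infty\ge C_{max}>0$) goes through.

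\medskip

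\textbf{What each approach buys.} Your route avoids appealing to Corollary~\ref{ucr} and instead reduces, via blow-up, to the classical strong minimum principle for the $p_0$-Laplacian with constant exponent. The paper's route avoids the blow-up machinery and the delicate compactness step entirely, at the cost of using the nondegeneracy of minimizers---a property specific to the variational structure, which the paper has already established for all $p(x)>1$.
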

\begin{proof}
Assume by contradiction that the inequality is false. Then,  by
Lemma \ref{dx}, $\Delta_{p(x)} u=0$ in $B_r(x_0)$. Therefore, by the
regularity results in \cite{AM}, $\nabla u\in
C^{\alpha}(B_r(x_0))$ and, since $u\geq 0$ and $u(x_0)=0$, there holds that
$\nabla u (x_0)=0$. Thus, $|\nabla u(x)|\leq C\rho^{\alpha}$ in
$\overline{B_{\rho}}$ if $\rho\leq 3r/4 $. From here we have that $u(x)\leq
C \rho^{1+\alpha}$ in $\overline{B_{\rho}}$ if $\rho\leq 3r/4 $.

On the other hand, by
Corollary \ref{ucr}, $\frac43C_{min} \rho\leq\sup_{B_{\rho}(x_0)} u\leq C
\rho^{1+\alpha}$ if $\rho$ is small, which is a contradiction.
\end{proof}

Now we can prove the local Lipschitz continuity of minimizers of
$\mathcal{J}$ when $p\in Lip(\Omega)$ and $p\geq 2$.

\begin{teo}\label{Lip}
Let $p\in Lip(\Omega)$ and $p\geq 2$.
Let $u$ be a
minimizer of $\mathcal{J}$ in $\K$. Then, $u$ is locally Lipschitz
continuous in $\Omega$. Moreover, for any connected open subset
$D\subset\subset
 \Omega$ containing free boundary points, the Lipschitz constant of $u$ in $D$ is
estimated by a constant $C$ depending only on $N, p_{maz}, p_{min},L,
dist(D,\partial\Omega)$, $\|u\|_{L^\infty(\Omega)}$, $\||\nabla u|^{p(x)}\|_{L^{1}(\Omega)}$, $\lambda_1$ and $\lambda_2$.
\end{teo}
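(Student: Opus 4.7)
The plan is to derive a uniform pointwise bound on $|\nabla u|$ over $\{u>0\}\cap D$ and then combine with the (a.e.) vanishing of $\nabla u$ on $\{u=0\}$ to obtain $|\nabla u|\in L^\infty(D)$. Since $u$ is already continuous by Theorem \ref{cotaalpha}, this will yield $u\in C^{0,1}(D)$.

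For $x\in\{u>0\}\cap D$, set $d=d(x):=\mathrm{dist}(x,\{u=0\})$ and let $d_0=\mathrm{dist}(D,\partial\Omega)$. First suppose $d$ is small relative to $d_0$ (and to the $r_0$ of Corollary \ref{otrocota}). Picking a closest point $y_0\in\{u=0\}$, one has $y_0\in\partial\{u>0\}$ and $y_0$ lies at distance $\ge d_0/2$ from $\partial\Omega$, so Corollary \ref{otrocota} applies at $y_0$ with $r=8d/3$ to give
\[
\sup_{B_d(x)}u\;\le\;\sup_{B_{2d}(y_0)}u\;\le\;\tfrac{8}{3}C_{\max}\,d.
\]
In the complementary regime (i.e.\ $d$ bounded below by a fixed positive quantity depending on $d_0$), the trivial bound $\sup_{B_d(x)}u\le\|u\|_\infty\le M\,d$ holds for a constant $M$ depending on $\|u\|_\infty$ and $d_0$. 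In either case, setting $\rho=d/2$, one has $B_{2\rho}(x)\subset\{u>0\}$, $u$ is $p(x)$-harmonic there by Proposition \ref{solucion}, and $\sup_{B_{2\rho}(x)}u\le M'\rho$ for a constant $M'$ depending only on the quantities listed in the theorem.

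To turn this sup bound into a pointwise gradient bound I would rescale. Define $v(y):=u(x+\rho y)/\rho$ on $B_1$. The key algebraic observation is that this \emph{homogeneous} scaling (same factor $\rho$ in domain and codomain) preserves gradients, $\nabla_y v(y)=(\nabla_x u)(x+\rho y)$; a direct computation then converts $\Delta_{p(\cdot)}u=0$ into $\Delta_{\tilde p(\cdot)}v=0$ with $\tilde p(y):=p(x+\rho y)$, which has Lipschitz constant $\rho L\le L$. Since $\|v\|_{L^\infty(B_1)}\le 2M'$, the $C^{1,\alpha}$ interior regularity of $p(x)$-harmonic functions from \cite{AM} gives $|\nabla v(0)|\le C'$ with $C'$ depending only on $N,p_{\min},p_{\max},L,M'$, and therefore
\[
|\nabla u(x)|\;=\;|\nabla v(0)|\;\le\;C'.
\]

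The main obstacle, already flagged in the Technical Comments, is the failure of $p(x)$-harmonicity under arbitrary rescalings $u\mapsto u(tx)/k$ with $t\ne k$. The argument above sidesteps this by using precisely the scaling with $t=k=\rho$, for which the gradient, the $L^\infty$-norm relative to the radius, and the ellipticity structure all transform cleanly; the hypothesis $p(x)\ge 2$ enters only indirectly, through Lemma \ref{dx} used to establish Corollary \ref{otrocota}. To finish, $\nabla u=0$ a.e.\ on $\{u=0\}$ (standard Sobolev fact), combined with the pointwise bound $|\nabla u|\le C'$ on $\{u>0\}\cap D$, yields $|\nabla u|\in L^\infty(D)$, and since $u$ is continuous we conclude $u\in C^{0,1}(D)$ with Lipschitz constant controlled by the stated quantities.
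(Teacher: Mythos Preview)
Your argument is essentially the paper's: both combine Corollary \ref{otrocota} with an interior gradient estimate for $p(x)$-harmonic functions (the paper packages the latter as Lemma \ref{gradient-estimate}, which is precisely \cite{AM} plus Caccioppoli, while you carry out the equivalent homogeneous rescaling by hand). Two routine details to tidy: in the large-$d$ regime you must cap the working radius by something like $d_0/2$ so that the ball stays inside $\Omega$, and before invoking \cite{AM} on $v$ you should note that Caccioppoli bounds $\int_{B_1}|\nabla v|^{\tilde p}$ in terms of $\|v\|_{L^\infty}\le 2M'$, since the \cite{AM} constant depends on that modular.
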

\begin{proof} The proof follows as in \cite{AC}, from Corollary \ref{otrocota} and the gradient estimate
$$
|\nabla u(y)|\le C\Big(1+\frac1r\sup_{B_r(y)}u\Big)^{\frac{p_{+}(B_r)}
{p_{-}(B_r)}}
$$
that holds if $\Delta_{p(x)}u=0$ in $B_r(y)$ (see Lemma \ref{gradient-estimate}).
\end{proof}

\section{Linear growth -- Positive density}
Throughout this section we will assume that $u$ is a locally Lipschitz, non-degenerate (i.e. satisfying the conclusions of Corollary \ref{ucr}) minimizer, and we also assume that $p$ is H\"older continuous.

\begin{teo}\label{densprop}
Suppose that $p$ is H\"older continuous, $u$ is Lipschitz with constant $C_{Lip}$ and  non-degenerate with constants $c$ and $r_0$. For any domain
$D\subset\subset \Omega$ there exists a constant $\tilde{c}$, with $0<\tilde{c}<1$
depending on $N,C_{Lip}, c, D$ and the H\"older modulus of continuity of $p$, such that, for any
minimizer $u$ and for every $B_r\subset \Omega$, centered at the
free boundary with $r\le r_0$ we have,
$$ \tilde{c} \leq \frac{|B_r\cap\{u>0\}|}{|B_r|} \leq 1-\tilde{c}$$
\end{teo}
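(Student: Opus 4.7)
The statement splits into two parts for a free boundary point $x_0 \in D \cap \partial\{u>0\}$. Write $B_r = B_r(x_0)$ and $S_r = B_r \cap \{u=0\}$.

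The lower bound $|B_r \cap \{u>0\}| \ge \tilde c|B_r|$ is a direct consequence of nondegeneracy combined with the Lipschitz bound on $u$. Corollary \ref{ucr} provides $y \in \overline{B_{r/2}}$ with $u(y) \ge \tfrac12 C_{min} r$, and the Lipschitz constant $C_{Lip}$ forces $u > 0$ throughout $B_\rho(y)$ with $\rho = C_{min} r/(2 C_{Lip})$; for $r_0$ small enough that $\rho \le r/2$ one has $B_\rho(y) \subset B_r$, hence $|B_r \cap \{u>0\}|/|B_r| \ge (C_{min}/(2C_{Lip}))^N$.

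The upper bound $|S_r| \ge \tilde c|B_r|$ is harder; I follow the Alt--Caffarelli scheme in its $p(x)$-variant. Let $v$ be the $p(x)$-harmonic replacement of $u$ on $B_r$, that is $\Delta_{p(x)} v = 0$ in $B_r$ and $v - u \in W_0^{1,p(\cdot)}(B_r)$. By Lemma \ref{subsol} and comparison, $0 \le u \le v$ in $B_r$. Nondegeneracy gives $\sup_{B_r} u \ge C_{min} r$, hence $\sup_{B_r} v \ge C_{min} r$, and Harnack's inequality (Remark \ref{harna2}, applied to the rescaled function $r^{-1}v(x_0 + r\cdot)$ whose $L^\infty$ norm is controlled by $C_{Lip}$ via $\|u\|_{L^\infty(B_r)} \le C_{Lip} r$) then gives $v \ge c_0 r$ on $B_{r/2}$. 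Since $\{v>0\}$ contains $\{u>0\}$ in $B_r$, the minimality $\mathcal{J}(u) \le \mathcal{J}(v)$ reduces to
\begin{equation*}
\int_{B_r}\Big(\frac{|\nabla u|^{p(x)}}{p(x)} - \frac{|\nabla v|^{p(x)}}{p(x)}\Big)\,dx \le \lambda_2 |S_r|,
\end{equation*}
and combining with the convexity inequality \eqref{a1a22}, absorbing the $\{p<2\}$ contribution by a Young argument analogous to \eqref{rn4} (relying on the bound $\int_{B_r}(|\nabla u|+|\nabla v|)^{p(x)}\,dx \le C|B_r|$, which follows from the Lipschitz bound on $u$ and the Dirichlet minimality of $v$), one obtains
\begin{equation*}
\int_{B_r} |\nabla(v-u)|^{p(x)}\,dx \le C\sqrt{|S_r|\,|B_r|}.
\end{equation*}

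To close, I use Poincar\'e together with scaling. Lemma \ref{gradient-estimate} applied to $v$ together with $\|v\|_{L^\infty(B_r)} \le C_{Lip} r$ shows that $\nabla(v-u)$ is bounded on $B_{r/2}$; and the H\"older continuity of $p$ with $r \le r_0$ small gives $p_+(B_r) - p_-(B_r) \le \omega(r_0)$, allowing one to replace $p(x)$ by the constant $p_+ = p_+(B_r)$ at the cost of a bounded multiplicative factor. The constant-exponent Poincar\'e inequality in rescaled form then yields $\int_{B_{r/2}}(v-u)^{p_+}\,dx \le Cr^{p_+}\sqrt{|S_r|\,|B_r|}$. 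On $\{u \le c_0 r/2\} \cap B_{r/2}$ one has $v - u \ge c_0 r/2$, and by Lipschitz continuity of $u$ with $u(x_0) = 0$ this set contains $B_{\delta}(x_0)$ with $\delta = c_0 r/(2C_{Lip})$; substituting these lower bounds forces $|S_r|/|B_r| \ge \tilde c$ for an explicit $\tilde c > 0$.

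The main obstacles are technical and specific to the variable exponent: recovering a useful bound on $\int |\nabla(v-u)|^{p(x)}$ from the two-sided convexity inequality \eqref{a1a22} requires the Young absorption on the $\{p<2\}$ region; trading $p(x)$ for the constant $p_+$ uniformly needs both the interior Lipschitz bound for $v$ (valid only in $B_{r/2}$, which is why the argument must be closed on $B_{r/2}$ rather than $B_r$) and the H\"older modulus of continuity of $p$; and keeping the Harnack constant for $v$ scale-invariant requires the rescaling observation of Remark \ref{harna2}.
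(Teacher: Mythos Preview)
Your lower bound argument is correct and matches the paper's.

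For the upper bound you take a direct quantitative route (harmonic replacement plus Poincar\'e), whereas the paper argues by compactness: assuming $|\{u_k=0\}\cap B_1|\to 0$ along a sequence of minimizers, it shows $u_k$ and the replacements $v_k$ converge to the same limit $u_0$, which is then $p_0$-harmonic for a \emph{constant} $p_0$; the contradiction comes from $u_0(0)=0$, $u_0\ge 0$, nondegeneracy, and the $C^{1,\alpha}$ regularity of $p_0$-harmonic functions (as in Corollary~\ref{otrocota}).

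Your direct argument has a genuine gap at the Harnack step. The Harnack inequality available here (Lemma~\ref{har}, Remark~\ref{harna2}) reads $\sup_{B_\rho}\tilde v\le C(\inf_{B_\rho}\tilde v+\rho)$ after rescaling, with an additive $\rho$ that does not vanish. Nondegeneracy only gives $\sup_{B_\rho}\tilde v\ge c\rho$, so you obtain $\inf_{B_\rho}\tilde v\ge (c/C-1)\rho$, and there is no reason the nondegeneracy constant $c$ should dominate the Harnack constant $C$; hence you cannot conclude $v\ge c_0 r$ on any subball. (Compare Lemma~\ref{gamapromedio}, where the same Harnack is used but under the extra hypothesis that $M=\sup_{B_{3/4}}u$ is large, precisely to absorb the additive term.) A Harnack chain does not help: the additive losses accumulate. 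This obstruction is intrinsic to the variable-exponent setting and is exactly what the paper's compactness proof sidesteps by passing to a constant-exponent limit.

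There is also a secondary issue in your Poincar\'e step: you restrict to $B_{r/2}$ to exploit the interior gradient bound on $v$, but $v-u$ does not vanish on $\partial B_{r/2}$, so the zero-trace Poincar\'e inequality is unavailable there; working instead on $B_r$ you lose the pointwise bound on $|\nabla v|$ needed to pass from $p(x)$ to $p_+$. This part can be repaired (e.g.\ rescale, use Remark~\ref{equi} to pass from the modular to the norm, then Lemma~\ref{poinc} on $B_1$ and Theorem~\ref{imb} to reach $L^{p_{min}}$), but the Harnack obstruction above remains and blocks the closure of the argument.
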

\begin{proof}
First, by the non-degeneracy we have that there exists $y\in B_r$
such that $u(y)\ge cr$ so that,
$$\sup_{B_{\kappa r(y)}} u\geq u(y)\geq c r.$$
Therefore,
$$\frac{1}{\kappa r} \sup_{B_{\kappa r(y)}}u\geq
 \frac{c}{\kappa}.$$ Now, if $\kappa$ is
small enough, we have
$$\frac{1}{\kappa r} \sup_{B_{\kappa r(y)}}u>
 C_{Lip}.$$

Using the fact that $u$ is Lipschitz with constant $C_{Lip}$
we find that $u>0$ in $B_{\kappa r},$ where $\kappa=\kappa(C_{Lip},c).$
Thus,
$$\frac{|B_r \cap \{u>0\}|}{|B_r|}\geq \frac{|B_{\kappa
r}|}{|B_r|}= \kappa^N.$$

In order to prove the other inequality, we may assume that $r=1$ and the ball is centered at the origin. Let us suppose by contradiction
that there exists a sequence of minimizers $u_k$ in $B_1$, corresponding to powers $p_{min}\leq p_k(x)\leq p_{max}$ with the same H\"older modulus of continuity $\omega(r)$, $u_k$ Lipschitz with constant $C_{Lip}$ and non-degenerate with constant $c$
such
that, $0\in\partial\{u_k>0\}$ and $|\{u_k=0\}\cap
B_{1}|=\ep_k\rightarrow 0$. Let us take $v_k\in W^{1,p_k(x)}(B_{1/2})$
such that,
\begin{equation}\label{eqvk2}
\Delta_{p_k(x)} v_k=0\ \quad \mbox{ in } B_{1/2},\qquad
v_k-u_k\in W^{1,p_k(x)}_0(B_{1/2}).
\end{equation}

We have, by the arguments leading to \eqref{rn2}, \eqref{rn3},
$$
\begin{aligned}
& \int_{\{p_k\geq 2\}\cap B_{1/2}} |\nabla u_k-\nabla v_k|^{p_k(x)}\, dx\leq C \ep_k\quad
\mbox{ and }
\\
& \int_{\{p_k< 2\}\cap B_{1/2}}\big(|\nabla u_k|+|\nabla v_k|\big)^{p_k(x)-2} |\nabla u_k-\nabla v_k|^2\, dx\leq C \ep_k.
\end{aligned}
$$

Now, since $|\nabla u_k|\le C_{Lip}$ for every $k$,
$$
\begin{aligned}
&\int_{\{p_k< 2\}\cap B_{1/2}}\big(|\nabla u_k|+|\nabla v_k|\big)^{p_k(x)-2} |\nabla u_k-\nabla v_k|^2\, dx\ge\\
&\hskip1cm \int_{\{p_k< 2\}\cap B_{1/2}}\big(2|\nabla u_k|+|\nabla u_k-\nabla v_k|\big)^{p_k(x)-2} |\nabla u_k-\nabla v_k|^2\, dx\ge\\
&\hskip1cm C\int_{\{p_k< 2,|\nabla u_k-\nabla v_k|\le |\nabla u_k|\}\cap B_{1/2}}|\nabla u_k-\nabla v_k|^2\,dx+\\
&\hskip1cm C\int_{\{p_k< 2,|\nabla u_k-\nabla v_k|> |\nabla u_k|\}\cap B_{1/2}}|\nabla u_k-\nabla v_k|^{p_k(x)}\,dx
\end{aligned}
$$

On the other hand, using again that  $|\nabla u_k|\le C_{Lip}$ for every $k$, assuming that $p_-<2$,
$$
\begin{aligned}
& \int_{\{p_k< 2,|\nabla u_k-\nabla v_k|\le |\nabla u_k|\}\cap B_{1/2}}|\nabla
u_k-\nabla v_k|^{p_k(x)}\,dx\le\\
 &\hskip1cm C \int_{\{p_k< 2,|\nabla u_k-\nabla v_k|\le |\nabla u_k|\}\cap B_{1/2}}|\nabla
u_k-\nabla v_k|^{p_-}\,dx\le\\
&\hskip1cm C\Big(\int_{\{p_k< 2,|\nabla u_k-\nabla v_k|\le |\nabla u_k|\}\cap B_{1/2}}|\nabla
u_k-\nabla v_k|^{2}\,dx\Big)^{p_-/2}
\end{aligned}
$$

If $p_-\ge2$,
$$
\begin{aligned}
&\int_{\{p_k< 2,|\nabla u_k-\nabla v_k|\le |\nabla u_k|\}\cap B_{1/2}}|\nabla
u_k-\nabla v_k|^{p_k(x)}\,dx\le\\
&\hskip1cm C \int_{\{p_k< 2,|\nabla u_k-\nabla v_k|\le |\nabla u_k|\}\cap B_{1/2}}|\nabla
u_k-\nabla v_k|^{2}\,dx\le C \ep_k.
\end{aligned}
$$

Summing up we get,
\begin{equation}\label{wk}
\int_{B_{1/2}}|\nabla u_k-\nabla v_k|^{p_k(x)}\,dx\le C\max\{\ep_k,\ep_k^{p_-/2}\}.
\end{equation}

 On the other hand, since $\|u_k\|_\infty\le C_{Lip}$ and $\|v_k\|_{C^{1,\alpha}(B_\rho)}\leq
C(N,\rho, p_+,p_-,\omega(r),\|u_k\|_{L^{\infty}(B_{1/2})})$
(see \eqref{cotalinfinito} for the bound of $\|v_k\|_{L^\infty(B_{1/2})}$ and \cite{AM} for the regularity of $v_k$), there holds that, for a subsequence,
$v_k\to v_0$ and $\nabla v_k\to\nabla v_0$ uniformly on compact subsets of $B_{1/2}$.

Finally, since $ \|u_k\|_{Lip(B_{1/2})} \leq C_{Lip}$ we have, for a subsequence, $u_k\to u_0$ uniformly in $B_{1/2}$.

Let $w_k=u_k-v_k$. Then, $w_k\to u_0-v_0$ uniformly on compact subsets of $B_{1/2}$. Let us see that $u_0=v_0$.

In fact, by \eqref{wk} we have that
$\|\nabla w_k\|_{L^{p_k(x)}}\to 0$. Since $w_k\in W_0^{1,p_k(x)}(B_{1/2})$, by
Poincar\'e inequality we get that $\|w_k\|_{L^{p_k(x)}(B_{1/2})}\to0$.
By Theorem \ref{imb} there holds that $w_k\to0$ in $L^{p_-}(B_{1/2})$ and, for a subsequence, $w_k\to0$ almost everywhere. Thus, $u_0=v_0$.

Since, the $p_k$'s are uniformly H\"older continuous and are uniformly bounded, there exists $p_0$ such that (for a subsequence) $p_k\to p_0$ uniformly in $B_{1/2}$.

Now, recall that $v_k\to u_0$ in $C^1(B_{1/2})$.
Then, $\Delta_{p_0(x)} u_0=0$ in $B_{1/2}$.

As $u_k\to u_0$ uniformly in $B_{1/2}$ and are uniformly non-degenerate we get,
$\sup_{B_s} u_0\geq c s$ for $s$ small.
 But $u_0(0)=\lim u_k(0)=0$. By the same argument as that in Corollary \ref{otrocota} we arrive at a contradiction.
\end{proof}

\begin{remark}\label{rema}
Theorem  \ref{densprop} implies that the free boundary has Lebesgue
measure zero. In fact, in order to prove this statement, it is
enough to use the first inequality in Theorem \ref{densprop}, as
this estimate says that the set of Lebesgue points of
$\chi_{\{u>0\}}$ in $\partial\{u>0\}\cap D$ is empty. On the other
hand almost every point $x_0\in\partial\{u>0\}\cap D$ is a Lebesgue
point, therefore $|\partial\{u>0\}\cap D|=0$.
\end{remark}


\section{The measure $\Lambda=\Delta_{p(x)}u$}
We still assume that $u$ is a non-degenerate, locally Lipschitz minimizer.

In this section we assume that $p$ is  H\"older continuous. First, we prove that $\{u>0\}\cap\Omega$ is locally of
finite perimeter. Then, we study the measure $\Lambda=\Delta_{p(x)}u$
and prove that it is absolutely continuous with respect to the $\H$
measure restricted to the free boundary. This result gives rise to a
representation theorem for the measure $\Lambda$. Finally, we prove
that almost every point in the free boundary belongs to the reduced
free boundary.

\begin{teo}
For every $\varphi\in C_0^\infty(\Omega)$ such that ${\rm
supp}(\varphi)\subset\{u>0\}$,
\begin{equation}\label{ecuacion}
\int_\Omega  |\nabla u|^{p(x)-2}\nabla u\nabla \varphi=0.
\end{equation}
Moreover, the application
$$
\Lambda(\varphi):= -\int_\Omega |\nabla u|^{p(x)-2}\nabla u\nabla
\varphi\,dx
$$
from $C_0^\infty(\Omega)$ into $\R$ defines a nonnegative Radon
measure $\Lambda=\Delta_{p(x)} u$ with support on
$\Omega\cap\partial\{u>0\}$.
\end{teo}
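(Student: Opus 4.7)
The plan is to establish the three assertions in sequence: the identity \eqref{ecuacion}, then the fact that $\Lambda$ defines a nonnegative Radon measure on $\Omega$, and finally that this measure is supported on $\Omega \cap \partial\{u>0\}$. I do not expect any substantial obstacle, since all the structural inputs (subharmonicity, harmonicity in $\{u>0\}$, local Lipschitz regularity) are already in place from earlier sections of the paper.

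For \eqref{ecuacion}, I would appeal directly to Proposition \ref{solucion}, which guarantees that $u$ is $p(x)-$harmonic on the open set $\{u>0\}$. Any $\varphi \in C_0^\infty(\Omega)$ with $\operatorname{supp}(\varphi) \subset \{u>0\}$ is an admissible test function for the weak formulation of $\Delta_{p(x)} u = 0$ on this open set, so the integral vanishes immediately. Note also that the functional $\Lambda$ is well-defined on all of $C_0^\infty(\Omega)$ because the standing local Lipschitz continuity of $u$ places $|\nabla u|^{p(x)-1}$ in $L^{\infty}_{\mathrm{loc}}(\Omega)$, so the integrand is locally bounded and the integral converges for any test function.

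For the measure property, the key input is Lemma \ref{subsol}, which states that $u$ is $p(x)-$subharmonic; this is exactly the assertion that $\Lambda(\varphi) \geq 0$ for every nonnegative $\varphi \in C_0^\infty(\Omega)$. Hence $\Lambda$ is a nonnegative distribution, and I would then invoke the standard fact that every nonnegative distribution is representable by a nonnegative Radon measure: for each compact $K \subset \Omega$, choose $\eta_K \in C_0^\infty(\Omega)$ with $\eta_K \geq 0$ and $\eta_K \equiv 1$ on $K$; for $\varphi \in C_0^\infty(\Omega)$ supported in $K$, the functions $\|\varphi\|_\infty \eta_K \pm \varphi$ are nonnegative, yielding $|\Lambda(\varphi)| \leq \Lambda(\eta_K)\|\varphi\|_\infty$. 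This gives continuity of $\Lambda$ in the $C_c(K)$ sup-norm topology, and the Riesz representation theorem delivers the desired nonnegative Radon measure.

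For the support assertion, it suffices to verify that $\Lambda(\varphi) = 0$ whenever $\operatorname{supp}(\varphi)$ avoids $\partial\{u>0\}$. The open set $\Omega \setminus \partial\{u>0\}$ decomposes as the disjoint union $\{u>0\} \cup \operatorname{int}\{u=0\}$, so a partition of unity subordinate to this cover lets me write $\varphi = \varphi_1 + \varphi_2$ with $\operatorname{supp}(\varphi_1) \subset \{u>0\}$ and $\operatorname{supp}(\varphi_2) \subset \operatorname{int}\{u=0\}$. The first piece gives $\Lambda(\varphi_1) = 0$ by the already-proved identity \eqref{ecuacion}, and the second piece gives $\Lambda(\varphi_2) = 0$ because $u \equiv 0$ on an open neighborhood of $\operatorname{supp}(\varphi_2)$, so $\nabla u = 0$ there and the integrand defining $\Lambda(\varphi_2)$ vanishes pointwise.
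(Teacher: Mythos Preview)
Your proposal is correct and follows essentially the same approach as the paper: subharmonicity (Lemma \ref{subsol}) gives nonnegativity of $\Lambda$, Riesz representation promotes this to a Radon measure, and $p(x)$-harmonicity in $\{u>0\}$ (Proposition \ref{solucion}) together with $\nabla u=0$ on the interior of $\{u=0\}$ forces the support onto $\partial\{u>0\}$. The paper's own proof is a two-line version of exactly this argument; you have simply spelled out the Riesz continuity estimate and the partition-of-unity step that the paper leaves implicit.
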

\begin{proof}
We know that $u$ is  $p(x)-$subharmonic. Then, by the
Riesz Representation Theorem, there exists a nonnegative Radon
measure $\Lambda$, such that $\Delta_{p(x)} u=\Lambda$. And, as $\Delta_{p(x)}
u=0$ in $\{u>0\}$, for any $\varphi\in
C_0^\infty(\Omega\setminus\partial\{u>0\})$ there holds that $\Lambda(\varphi)=0$ and
the result follows.
\end{proof}

Now we want to prove that $\Omega\cap
\partial\{u>0\}$, has finite $N-1$ dimensional Hausdorff measure.
First, we need the following lemma,

\begin{lema}\label{proplim}
Let $u_k$ be a sequence of  minimizers in $B_1$ corresponding to powers $p_k(x)$ and coefficients $\lambda_k(x)$ with $1<p_{min}\le p_k(x)\le p_{max}<\infty$, $0<\lambda_1\le \lambda_k(x)\le\lambda_2<\infty$, and all the $p_k$'s with the same modulus of continuity $\omega(r)$. Assume
$u_k \rightarrow u_0$
uniformly in $B_1$,
$|\nabla u_k|\leq C_{Lip}$ in
$B_1$, and that the $u_k$'s are non-degenerate in $B_1$ with constants $c_0$ and $r_0$. Then,
\begin{enumerate}

\item $\partial\{u_k>0\}\to \partial\{u_0>0\}$ locally in
Hausdorff distance,

\medskip

\item $\chi_{\{u_k>0\}}\to \chi_{\{u_0>0\}}$ in $L^1(B_1)$,

\medskip

\item If $0\in \partial\{u_k>0\}$, then $0\in
\partial\{u_0>0\}$.
\end{enumerate}
\end{lema}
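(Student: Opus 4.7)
The plan is to leverage the uniform Lipschitz bound, uniform non-degeneracy, and the minimality of each $u_k$, together with the scaling-invariant Harnack inequality of Remark \ref{harna2} (whose constant becomes uniform in $k$ once $\|u_k\|_\infty\le C_{Lip}$ is uniformly controlled). Statement $(3)$ will follow from $(1)$, and also directly from non-degeneracy: $0\in\partial\{u_k>0\}$ forces $u_k(0)=0\to u_0(0)$, while non-degeneracy furnishes $y_k^r\in\overline{B_r(0)}$ with $u_k(y_k^r)\ge c_0 r/2$, so any subsequential limit $y^r$ satisfies $u_0(y^r)\ge c_0 r/2>0$; hence $0\in\partial\{u_0>0\}$.

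For $(1)$, I check both Hausdorff inclusions on compact subsets of $B_1$. If $x_k\in\partial\{u_k>0\}$ and $x_k\to x_0$, then $u_k(x_k)=0$ forces $u_0(x_0)=0$ by uniform convergence, and the non-degeneracy argument above (now applied at $x_k$) yields $x_0\in\partial\{u_0>0\}$. Conversely, given $x_0\in\partial\{u_0>0\}$ and small $\epsilon>0$, pick $y\in B_{\epsilon/4}(x_0)$ with $u_0(y)>0$. If $\partial\{u_k>0\}\cap B_\epsilon(x_0)=\emptyset$ along a subsequence, connectedness of $B_\epsilon(x_0)$ gives either $B_\epsilon(x_0)\subset\{u_k=0\}$ (which contradicts $u_k(y)\to u_0(y)>0$), or $B_\epsilon(x_0)\subset\{u_k>0\}$. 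In the second case $u_k$ is $p_k(x)$-harmonic in $B_\epsilon(x_0)$, so Harnack's inequality gives $\sup_{B_{\epsilon/2}(x_0)}u_k\le C\,u_k(x_0)\to 0$, again contradicting $u_k(y)\to u_0(y)>0$.

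For $(2)$, uniform convergence handles $\big|\{u_0>0\}\setminus\{u_k>0\}\big|\to 0$ via slicing on $\{u_0>\delta\}$ with $\delta\to 0$, so it remains to bound $\big|\{u_k>0\}\cap\{u_0=0\}\big|$. Fix $\epsilon>0$: I claim that for $k$ large every $x$ in this set lies in $N_\epsilon(\partial\{u_0>0\})$. Otherwise $B_{2r}(x)\subset\{u_0=0\}$ with $r:=\min(\epsilon/4,r_0/2)$, hence $u_k\to 0$ uniformly on $B_{2r}(x)$. Non-degeneracy applied at the nearest free-boundary point of $u_k$ to $x$ forces this point to lie outside $B_r(x)$ for $k$ large, i.e., $B_r(x)\subset\{u_k>0\}$. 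Setting $s_k:=\sup_{B_r(x)}u_k\to 0$ and comparing $u_k$ with $w_k:=(u_k-s_k\varphi)^+$ for a standard cutoff $\varphi\equiv 1$ on $B_{r/2}(x)$, $\mathrm{supp}\,\varphi\subset B_r(x)$, the minimality inequality $\J(u_k)\le\J(w_k)$ combined with $w_k\equiv 0$ on $B_{r/2}(x)$ reads
\begin{equation*}
\frac{\lambda_1}{2^N}|B_r|\le\int_{B_r(x)}\frac{|\nabla w_k|^{p_k(x)}-|\nabla u_k|^{p_k(x)}}{p_k(x)}\,dx\le C\,\frac{s_k}{r}|B_r|\longrightarrow 0,
\end{equation*}
a contradiction; the right-hand estimate uses $\big||a-b|^{p}-|a|^{p}\big|\le p(|a|+|b|)^{p-1}|b|$ together with the uniform Lipschitz bound on $\nabla u_k$. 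Hence $\{u_k>0\}\cap\{u_0=0\}\subset N_\epsilon(\partial\{u_0>0\})$. To conclude, Theorem \ref{densprop} applied to each $u_k$ together with the reverse-Fatou bound $\limsup_k\chi_{\{u_k=0\}}\le\chi_{\{u_0=0\}}$ yields positive lower density of $\{u_0=0\}$ at every point of $\partial\{u_0>0\}$, so Lebesgue differentiation forces $|\partial\{u_0>0\}|=0$, and letting $\epsilon\to 0$ completes $(2)$.

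The main obstacle is the stability of free boundaries under uniform limits: the second direction of $(1)$ and the bulk argument in $(2)$ both demand uniformity in $k$ of Harnack's inequality and of the estimate on $|\nabla w_k|^{p_k}-|\nabla u_k|^{p_k}$ respectively. The first is guaranteed precisely by the scaling-invariant form of Harnack singled out in Remark \ref{harna2} applied under the common $L^\infty$-bound; the second rests on the uniform Lipschitz bound on $\nabla u_k$, which keeps the gradient-energy perturbation of order $s_k/r=o(1)$ while the bulk $\chi$-contribution remains of order $|B_r|$.
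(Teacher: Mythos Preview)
Your arguments for the forward inclusion in (1), for (3), and for the bulk of (2) (the containment $\{u_k>0\}\cap\{u_0=0\}\subset N_\epsilon(\partial\{u_0>0\})$ via the minimality comparison with $(u_k-s_k\varphi)^+$) are correct and cleanly written. However, there is a genuine gap in the reverse inclusion of (1).

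You write that in Case~B, ``Harnack's inequality gives $\sup_{B_{\epsilon/2}(x_0)}u_k\le C\,u_k(x_0)\to 0$.'' But the Harnack inequality available for $p(x)$-harmonic functions (Lemma~\ref{har} and Remark~\ref{harna2}) has the form
\[
\sup_{B_r(x_0)}u_k\le C\bigl(\inf_{B_r(x_0)}u_k+r\bigr),
\]
with an additive $+r$ term coming from the inhomogeneity of the operator under scaling. With this term, letting $k\to\infty$ only yields $\sup_{B_r(x_0)}u_0\le Cr$, which is already implied by the Lipschitz bound and gives no contradiction with the existence of $y\in B_{\epsilon/4}(x_0)$ with $u_0(y)>0$ (note that $u_0(y)\le C_{Lip}\,|y-x_0|\le C_{Lip}\,\epsilon$, so the two estimates are compatible). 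The paper itself flags that the strong minimum principle---which is what makes the original Alt--Caffarelli--Friedman argument go through in the uniformly elliptic case---is \emph{not} known for $p(x)$-harmonic functions; your Harnack step is essentially an attempt to recover that principle, and it fails for the same reason.

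This gap then propagates to your final step in (2): to obtain positive lower density of $\{u_0=0\}$ at $x_0\in\partial\{u_0>0\}$ via Theorem~\ref{densprop} applied to $u_k$, you need a free boundary point of $u_k$ inside $B_r(x_0)$, which is exactly the reverse inclusion of (1). A workable substitute is the route used in the proof of Theorem~\ref{densprop}: in Case~B pass to a further subsequence with $p_k\to p_0$ uniformly, use the uniform $C^{1,\alpha}$ estimates of \cite{AM} to get $u_k\to u_0$ in $C^1_{loc}(B_\epsilon(x_0))$, deduce $\Delta_{p_0}u_0=0$ there with $u_0\ge 0$, $u_0(x_0)=0$, hence $\nabla u_0(x_0)=0$ and $u_0(x)\le C|x-x_0|^{1+\alpha}$; then combine this with a non-degeneracy statement for $u_0$ at $x_0$ (which you must first establish, e.g.\ by locating nearby free boundary points of $u_k$ through your own minimality argument applied at points of $\mathrm{int}\{u_0=0\}$ close to $x_0$, after showing such points exist).
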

\begin{proof}The proof follows as in pp. 19--20 of \cite{ACF}.
\end{proof}

Now, we prove the following theorem,
\begin{teo}\label{rn-1}
For any domain $D\subset\subset \Omega$ there exist constants $c,
C$,
 depending on $N, C_{Lip}, c_0, r_0$, $p_{min}, p_{max}, \lambda_1,\lambda_2, \omega(r)$
 and $D$ such that, for any
minimizer $u$ with $|\nabla u|\le C_{Lip}$, non-degenerate with constants $c_0$ and $r_0$,
and for every $B_r\subset
\Omega$, centered on the  free boundary with $r\le r_0$, we have
$$ c r^{N-1}\leq \int_{B_r}d\Lambda \leq C r^{N-1}$$
\end{teo}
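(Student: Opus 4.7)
This is a standard cutoff argument. Choose $\eta\in C_0^\infty(\Omega)$ with $\eta\equiv 1$ on $B_r(x_0)$, $\operatorname{supp}\eta\subset B_{2r}(x_0)$, $0\le\eta\le 1$ and $|\nabla\eta|\le C/r$. Since $\Lambda\ge 0$ and, by the definition of $\Lambda$, $\int\eta\,d\Lambda=-\int|\nabla u|^{p(x)-2}\nabla u\cdot\nabla\eta\,dx$, we get
\begin{equation*}
\Lambda(B_r(x_0))\le\int\eta\,d\Lambda\le\int_{B_{2r}}|\nabla u|^{p(x)-1}|\nabla\eta|\,dx\le C\,r^{N-1},
\end{equation*}
using $|\nabla u|\le C_{Lip}$ and hence $|\nabla u|^{p(x)-1}\le\max\{1,C_{Lip}\}^{p_{max}-1}$ together with $|B_{2r}|\le C r^N$.

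\textbf{Lower bound, blow-up setup.} Argue by contradiction: assume that for every $k$ there is a minimizer $u_k$ of a functional $\mathcal{J}_k$ with $p_k,\lambda_k$ in the admissible classes (same uniform constants and H\"older modulus), and $|\nabla u_k|\le C_{Lip}$, non-degenerate with constants $c_0,r_0$, together with $x_k\in D\cap\partial\{u_k>0\}$ and $r_k\le r_0$, such that $\Lambda_k(B_{r_k}(x_k))/r_k^{N-1}\to 0$. Rescale
\begin{equation*}
\tilde u_k(x):=\frac{u_k(x_k+r_kx)}{r_k},\quad\tilde p_k(x):=p_k(x_k+r_kx),\quad\tilde\lambda_k(x):=\lambda_k(x_k+r_kx).
\end{equation*}
Each $\tilde u_k$ is a minimizer of the correspondingly rescaled functional, $\tilde u_k(0)=0$, $0\in\partial\{\tilde u_k>0\}$, and the constants $C_{Lip}$, $c_0$, the H\"older modulus of $\tilde p_k$, the bounds $\lambda_1\le\tilde\lambda_k\le\lambda_2$ all pass unchanged. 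A direct scaling computation gives $\tilde\Lambda_k(B_1)=r_k^{1-N}\Lambda_k(B_{r_k}(x_k))\to 0$, where $\tilde\Lambda_k=\Delta_{\tilde p_k(x)}\tilde u_k$. Applying Arzel\`a-Ascoli to $\tilde u_k$ (via the Lipschitz bound) and to $\tilde p_k$, $\tilde\lambda_k$ (via their uniform H\"older modulus) and passing to subsequences, $\tilde u_k\to u_0$ uniformly on compact subsets of $B_1$, $\tilde p_k\to p_0$ and $\tilde\lambda_k\to\lambda_0$ uniformly. By Lemma \ref{proplim}, $0\in\partial\{u_0>0\}$ and $u_0$ inherits the non-degeneracy with constant $c_0$.

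\textbf{Deriving the contradiction.} Let $w_k$ be the $\tilde p_k(x)$-harmonic replacement of $\tilde u_k$ in $B_{1/2}$: $\Delta_{\tilde p_k(x)}w_k=0$, $w_k-\tilde u_k\in W_0^{1,\tilde p_k(\cdot)}(B_{1/2})$. Since $\tilde u_k$ is $\tilde p_k$-subharmonic, comparison gives $w_k\ge\tilde u_k\ge 0$. Using $\phi=w_k-\tilde u_k$ as a test function in both distributional equations and subtracting,
\begin{equation*}
\int_{B_{1/2}}\!\bigl(|\nabla w_k|^{\tilde p_k-2}\nabla w_k-|\nabla\tilde u_k|^{\tilde p_k-2}\nabla\tilde u_k\bigr)\cdot\nabla(w_k-\tilde u_k)\,dx=\int_{B_{1/2}}(w_k-\tilde u_k)\,d\tilde\Lambda_k\le\|w_k\|_\infty\,\tilde\Lambda_k(B_{1/2})\to 0,
\end{equation*}
since $\tilde u_k\equiv 0$ on $\operatorname{supp}\tilde\Lambda_k\subset\partial\{\tilde u_k>0\}$ and $\|w_k\|_\infty\le\|\tilde u_k\|_\infty\le C$. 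The monotonicity inequalities for the $p$-Laplacian, applied separately on $\{\tilde p_k\ge 2\}$ and $\{\tilde p_k<2\}$ as in Remark \ref{desip} and combined with H\"older's inequality in the subquadratic case, imply $\nabla\tilde u_k-\nabla w_k\to 0$ in $L^{\tilde p_k}(B_{1/2})$. By the interior $C^{1,\alpha}$ estimates of \cite{AM}, $w_k\to w_0$ in $C^1_{loc}(B_{1/2})$ with $w_0$ being $p_0(x)$-harmonic; combining with $\tilde u_k\to u_0$ uniformly we get $u_0=w_0$ in $B_{1/2}$, so $u_0$ is $p_0(x)$-harmonic in $B_{1/2}$. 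By \cite{AM}, $u_0\in C^{1,\alpha}(B_{1/4})$, and since $u_0\ge 0$ attains its minimum $0$ at the origin, $\nabla u_0(0)=0$, giving $u_0(x)\le C|x|^{1+\alpha}$ near $0$. This contradicts the non-degeneracy $\sup_{B_\rho(0)}u_0\ge c_0\rho$ for small $\rho$. The main obstacle is precisely this strong-convergence-of-gradients step, which requires controlling $\tilde u_k$ through the $p(x)$-harmonic replacement and exploiting the vanishing of the total mass of $\tilde\Lambda_k$ on $B_{1/2}$.
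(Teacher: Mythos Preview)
Your argument is correct and is essentially the approach the paper has in mind: the upper bound is the standard cutoff estimate, and the lower bound is the blow-up contradiction with the $p_k(x)$-harmonic replacement, handled exactly as in the proof of Theorem~\ref{densprop} (which is what the paper means by ``modifications similar to the ones in the proof of Theorem~\ref{densprop}''). Two minor remarks: at this stage the paper only assumes $\lambda$ bounded, not H\"older, so your Arzel\`a--Ascoli step for $\tilde\lambda_k$ is unavailable---but you never actually use the limit $\lambda_0$, so this is harmless; and the identity with test function $w_k-\tilde u_k$ against the Radon measure $\tilde\Lambda_k$ needs a short approximation argument (approximate by nonnegative, uniformly bounded $\phi_n\in C_0^\infty$ converging in $W_0^{1,\tilde p_k}$), after which you get the \emph{inequality} $\int(\text{monotone term})\le C\,\tilde\Lambda_k(B_{1/2})$, which is all you need.
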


\begin{proof}
The ideas are similar to the ones for the case $p$ constant in \cite{DP}, with modifications similar to the ones in the proof of Theorem \ref{densprop}.
\end{proof}

Therefore, we have the following representation theorem

\begin{teo}[Representation Theorem] \label{repteo} Let $u$ be a non-degenerate, locally Lipschitz continuous
minimizer. Then,
\begin{enumerate}
\item $\H( D\cap\partial\{u>0\})<\infty$ for every
$D\subset\subset\Omega$.

\item There exists a Borel function $q_u$ such that
$$
\Delta_{p(x)} u= q_u \,\H \lfloor \partial\{u>0\}.
$$
i.e
$$-\int_\Omega |\nabla u|^{p(x)-2}\nabla
u\nabla \varphi\,dx=\int_{\Omega\cap
\partial\{u>0\}} q_u \varphi \,d\mathcal{H}^{N-1}, \qquad \forall\ \varphi \in \ C_0^\infty(\Omega).
$$
 \item For $D\subset\subset\Omega$ there are constants $0<c\le
C<\infty$ 
such that for $B_r(x)\subset D$ and $x\in \partial\{u>0\}$,
$$
c\le q_u(x)\le C,\quad c\,r^{N-1}\le
\H(B_r(x)\cap\partial\{u>0\})\le C\,r^{N-1}.
$$

\end{enumerate}
\end{teo}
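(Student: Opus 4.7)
The plan is to deduce all three conclusions of the Representation Theorem from the two-sided estimate $c r^{N-1}\le \Lambda(B_r(x))\le C r^{N-1}$ of Theorem \ref{rn-1} together with standard measure-theoretic machinery (Vitali/Besicovitch covering and differentiation of Radon measures). The main point is that the estimates of Theorem \ref{rn-1} say exactly that $\Lambda$ is comparable to $\mathcal{H}^{N-1}\lfloor \partial\{u>0\}$ on balls centered at the free boundary.

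First, to prove (1), fix $D\subset\subset \Omega$, choose $D'$ with $D\subset\subset D'\subset\subset\Omega$, and consider the open neighborhood $U_\delta=\{x\in D'\colon \text{dist}(x,\partial\{u>0\})<\delta\}$ for $\delta$ small. Given $\rho<r_0$ I would cover $D\cap \partial\{u>0\}$ by balls $\{B_\rho(x_i)\}$ with $x_i\in \partial\{u>0\}\cap D$ using the Vitali (or $5r$-covering) lemma, so that the balls $B_{\rho/5}(x_i)$ are disjoint and contained in $D'$. Then by the lower estimate of Theorem \ref{rn-1},
\[
\sum_i \Big(\frac{\rho}{5}\Big)^{N-1} \le \frac{1}{c}\sum_i \Lambda\bigl(B_{\rho/5}(x_i)\bigr)\le \frac{1}{c}\,\Lambda(D'),
\]
which bounds $\sum_i \rho^{N-1}$ by a constant independent of $\rho$. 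Letting $\rho\to 0$ gives $\mathcal{H}^{N-1}(D\cap \partial\{u>0\})\le C \Lambda(D')<\infty$, which is precisely (1).

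Next, for (2) I would show the absolute continuity $\Lambda\ll \mathcal{H}^{N-1}\lfloor \partial\{u>0\}$ and then apply the Radon–Nikodym theorem. Since $\Lambda$ is supported on $\partial\{u>0\}$, it suffices to prove that for every Borel set $E\subset D\cap \partial\{u>0\}$ with $\mathcal{H}^{N-1}(E)=0$ one has $\Lambda(E)=0$. Given $\varepsilon>0$, cover $E$ by balls $B_{r_j}(x_j)$ with $x_j\in \partial\{u>0\}$ and $\sum r_j^{N-1}<\varepsilon$; then by the upper bound of Theorem \ref{rn-1},
\[
\Lambda(E)\le \sum_j \Lambda\bigl(B_{r_j}(x_j)\bigr)\le C\sum_j r_j^{N-1}<C\varepsilon,
\]
so $\Lambda(E)=0$. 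Applying the Radon–Nikodym theorem with respect to the Radon measure $\mu:=\mathcal{H}^{N-1}\lfloor \partial\{u>0\}$ (which is locally finite by (1)) produces a Borel density $q_u$ such that $\Lambda=q_u\,\mu$, proving (2).

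Finally, (3) is obtained by differentiating the measures. For $\mu$-almost every $x\in D\cap \partial\{u>0\}$ the Besicovitch differentiation theorem gives
\[
q_u(x)=\lim_{r\to 0}\frac{\Lambda(B_r(x))}{\mu(B_r(x))}.
\]
The upper bound $\Lambda(B_r(x))\le C r^{N-1}$ from Theorem \ref{rn-1} combined with the lower bound $\mu(B_r(x))\ge \mathcal{H}^{N-1}_\infty(B_r(x)\cap\partial\{u>0\})\ge c r^{N-1}$ (which I will establish below) gives $q_u\le C/c$, and symmetrically $q_u\ge c/C$; after redefining $q_u$ on a $\mu$-null set we get the two-sided bound for every point of the support. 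The measure bounds $cr^{N-1}\le \mathcal{H}^{N-1}(B_r(x)\cap \partial\{u>0\})\le Cr^{N-1}$ then follow by integrating: the upper bound is a Vitali covering argument exactly as in the proof of (1), applied inside $B_r(x)$, while the lower bound follows from $c r^{N-1}\le \Lambda(B_r(x))=\int_{B_r(x)\cap \partial\{u>0\}} q_u\,d\mathcal{H}^{N-1}\le \|q_u\|_\infty \mathcal{H}^{N-1}(B_r(x)\cap\partial\{u>0\})$. The main technical subtlety is making sure that the comparison of $\Lambda$ and $\mathcal{H}^{N-1}\lfloor\partial\{u>0\}$ is done only on balls \emph{centered at the free boundary} (so that Theorem \ref{rn-1} applies), which is why both the Vitali covering in (1) and the computation of the Radon–Nikodym derivative in (3) are set up to use only such balls.
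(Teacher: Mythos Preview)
Your overall strategy is exactly the one the paper has in mind (the paper simply says ``It follows as in Theorem~4.5 in \cite{AC}'', and what you wrote is essentially a reconstruction of that argument). Parts (1) and (2) are fine.

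There is, however, a genuine circularity in your treatment of (3). You bound $q_u$ above by combining the upper bound $\Lambda(B_r(x))\le Cr^{N-1}$ with a lower bound $\mu(B_r(x))\ge cr^{N-1}$ ``which I will establish below''; but the only argument you give below for that lower bound is $cr^{N-1}\le\Lambda(B_r(x))\le\|q_u\|_\infty\,\mu(B_r(x))$, which already presupposes the upper bound on $q_u$ you are trying to prove. The intermediate inequality $\mu(B_r(x))\ge\mathcal{H}^{N-1}_\infty(B_r(x)\cap\partial\{u>0\})$ is trivially true, but you never actually prove $\mathcal{H}^{N-1}_\infty(B_r(x)\cap\partial\{u>0\})\ge cr^{N-1}$ independently.

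The standard way to break the loop is to get $q_u\le C$ \emph{directly} from the upper estimate on $\Lambda$, without going through a lower bound on $\mu$: since $\Lambda(B_\rho(y))\le C\rho^{N-1}$ for every $y\in\operatorname{supp}\Lambda$ and small $\rho$, the density comparison theorem (e.g.\ \cite{F}, 2.10.19) gives $\Lambda\le C'\,\mathcal{H}^{N-1}\lfloor\partial\{u>0\}$, i.e.\ $q_u\le C'$ a.e. Once this is in hand your argument runs cleanly: the lower bound on $\mu(B_r)$ follows from $cr^{N-1}\le\Lambda(B_r)\le C'\mu(B_r)$, the upper bound on $\mu(B_r)$ is your Vitali argument, and the lower bound on $q_u$ then comes out of Besicovitch differentiation using the upper bound on $\mu(B_r)$ and the lower bound on $\Lambda(B_r)$.
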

\begin{proof}
It follows as in Theorem 4.5 in \cite{AC}.
\end{proof}

\begin{remark}\label{perfin}
As $u$ satisfies the conclusions of Theorem \ref{repteo}, the set
$\Omega\cap\{u>0\}$ has finite perimeter locally in $\Omega$ (see
\cite{F} 4.5.11). That is, $\mu_{u}:=-\nabla \chi _{\{u>0\}}$ is a
Borel measure, and the total variation $|\mu_u|$ is a Radon
measure. We define the reduced boundary as in \cite{F}, 4.5.5.
(see also \cite{EG}) by, $\partial_{red}\{u>0\}:=\{x\in
\Omega\cap\partial\{u>0\}/ |\nu_u(x)|=1\}$, where $\nu_u(x)$ is
a unit vector with
\begin{equation}\label{carac}
\int_{B_r(x)}|\chi_{\{u>0\}} -\chi_{\{y/\langle y-x,\nu_u(x)\rangle<0\}}|=o(r^{N})
\end{equation}
for $r\to 0$, if
such a vector exists, and $\nu_u(x)=0$ otherwise. By the results
in \cite{F} Theorem 4.5.6 we have,
$$\mu_u=\nu_u \H \lfloor \partial_{red}\{u>0\}.$$
\end{remark}

\begin{lema}\label{redcasitodo}
$\H(\partial\{u>0\}\setminus\partial_{\rm red}\{u>0\})=0.$
\end{lema}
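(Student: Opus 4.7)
The plan is to reduce the statement to Federer's structure theorem for sets of locally finite perimeter. By Remark \ref{perfin}, $\{u>0\}$ has locally finite perimeter in $\Omega$, so the classical result (see, e.g., \cite{EG} or \cite{F}, 4.5.6) applies: denoting by
\[
\partial^*\{u>0\}=\Big\{x\in\Omega\cap\partial\{u>0\}\ :\ \limsup_{r\to 0}\frac{|B_r(x)\cap\{u>0\}|}{|B_r(x)|}>0\ \text{and}\ \limsup_{r\to 0}\frac{|B_r(x)\cap\{u=0\}|}{|B_r(x)|}>0\Big\}
\]
the measure-theoretic boundary, one has
\[
\mathcal{H}^{N-1}\big(\partial^*\{u>0\}\setminus\partial_{red}\{u>0\}\big)=0.
\]
Thus it suffices to prove that every point of the topological free boundary already belongs to the measure-theoretic boundary, namely
\[
\Omega\cap\partial\{u>0\}\subset\partial^*\{u>0\}.
\]

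This inclusion is exactly what the positive density estimate of Section 5 provides. Indeed, let $x_0\in\Omega\cap\partial\{u>0\}$ and fix $D\subset\subset\Omega$ with $x_0\in D$. By Theorem \ref{densprop}, there is a constant $\tilde c\in(0,1)$ (depending only on $N$, $C_{Lip}$, the non-degeneracy constant, the H\"older modulus of $p$ and $D$) and $r_0>0$ such that for every $0<r\le r_0$
\[
\tilde c\le\frac{|B_r(x_0)\cap\{u>0\}|}{|B_r(x_0)|}\le 1-\tilde c.
\]
Taking $\limsup_{r\to 0}$ in both inequalities we conclude that $x_0\in\partial^*\{u>0\}$.

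Combining the two steps, $\partial\{u>0\}\cap D\subset\partial^*\{u>0\}$, and therefore
\[
\mathcal{H}^{N-1}\big((\partial\{u>0\}\setminus\partial_{red}\{u>0\})\cap D\big)\le\mathcal{H}^{N-1}\big(\partial^*\{u>0\}\setminus\partial_{red}\{u>0\}\big)\cap D\big)=0.
\]
Since $D\subset\subset\Omega$ is arbitrary and $\mathcal{H}^{N-1}(\partial\{u>0\}\cap D)<\infty$ by Theorem \ref{repteo}, the claim follows by exhausting $\Omega$ with such $D$.

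The only nontrivial input is Theorem \ref{densprop} (positive density of $\{u>0\}$ and $\{u=0\}$ at every free boundary point), which has already been established; beyond that, the argument is a direct appeal to Federer's structure theorem, so I do not anticipate a real obstacle here.
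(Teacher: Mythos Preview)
Your proof is correct and follows exactly the approach of the paper: the paper's proof is the one-line observation that the result follows from the density property of Theorem \ref{densprop} together with Theorem 4.5.6 (3) of \cite{F}, and you have simply spelled out this argument in detail (introducing the measure-theoretic boundary $\partial^*\{u>0\}$, using the density estimate to get $\partial\{u>0\}\subset\partial^*\{u>0\}$, and then invoking Federer's structure theorem). One minor typographical remark: in your last displayed inequality you have an unmatched \texttt{\textbackslash big)} that you should fix before compiling.
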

\begin{proof}
This is a consequence of the density property of Theorem
\ref{densprop} and Theorem 4.5.6 (3) of \cite{F}.
\end{proof}


\section{Asymptotic development and identification of the  function $q_u$}

In this section we  still assume that $u$ is a non-degenerate, locally Lipschitz
continuous minimizer,  $p$ is H\" older continuous
and, moreover we  assume that $\lambda$ is continuous.

We  prove some properties of blow up sequences of minimizers and prove that
any limit of a blow up sequence is a minimizer. Then, we  find an asymptotic development of minimizers near points
in their reduced free boundary. Finally, we  identify the function
$q_u$ for almost every point in the reduced free boundary.

We first prove some properties of blow up sequences,
\begin{defi}
Let $B_{\rho_k}(x_k)\subset D\subset\subset\Omega$ be a sequence of balls with
$\rho_k\to 0$, $x_k\to x_0\in \Omega$ and $u(x_k)=0$. Let
$$
u_k(x):=\frac{1}{\rho_k} u(x_k+\rho_k x).
$$
We call $u_k$ a blow-up sequence with respect to
$B_{\rho_k}(x_k)$.
\end{defi}

Since $u$ is locally Lipschitz continuous, there exists a blow-up
limit $u_0:\R^N\to\R$ such that, for a subsequence,
\begin{align*}
& u_k\to u_0 \quad \mbox{in} \quad C^\alpha_{\rm loc}(\R^N)\quad
\mbox{for every}\quad 0<\alpha<1,\\
& \nabla u_k\to\nabla u_0\quad *-\mbox{weakly  in}\quad
L^\infty_{\rm loc}(\R^N),
\end{align*}
and $u_0$ is Lipschitz in $\RR^N$ with constant $C_{Lip}$.

\begin{lema}\label{propblowup}
If $u$ is a non-degenerate, locally Lipschitz continuous
minimizer then,
\begin{enumerate}

\item $\partial\{u_k>0\}\to \partial\{u_0>0\}$ locally in
Hausdorff distance,

\medskip

\item $\chi_{\{u_k>0\}}\to \chi_{\{u_0>0\}}$ in $L^1_{\rm
loc}(\R^N)$,

\medskip
\item $\nabla u_k\to\nabla u_0$ uniformly in compact subsets of
$\{u_0>0\}$,

\medskip

\item $\nabla u_k\to\nabla u_0$ a.e in $\Omega$,

\medskip

\item If $x_k\in \partial\{u>0\}$, then $0\in
\partial\{u_0>0\}$
\medskip

\item $\Delta_{p(x_0)} u_0=0$ in $\{u_0>0\}$

\medskip

\item $u_0$ is Lipschitz continuous and non-degenerate with the same constants $C_{Lip}$ and $c_0$  as $u$.
\end{enumerate}
\end{lema}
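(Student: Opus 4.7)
The plan is to rescale and reduce to Lemma~\ref{proplim}, handling the extra items (3), (4) and (6) with the interior $C^{1,\alpha}$ estimates for $p(x)$-harmonic functions. Set $p_k(x):=p(x_k+\rho_k x)$ and $\lambda_k(x):=\lambda(x_k+\rho_k x)$. A direct change of variables shows that, on every ball $B_R$ with $x_k+\rho_k B_R\subset\Omega$, the function $u_k$ is a minimizer of
\[
\mathcal J_k(v)=\int_{B_R}\frac{|\nabla v|^{p_k(x)}}{p_k(x)}+\lambda_k(x)\chi_{\{v>0\}}\,dx.
\]
Since $p$ and $\lambda$ are H\"older continuous and $\rho_k\to 0$, the $p_k$'s share a common H\"older modulus, $p_k\to p(x_0)$ and $\lambda_k\to\lambda(x_0)$ uniformly on compact sets, and the uniform bounds $p_{min}\le p_k\le p_{max}$, $\lambda_1\le\lambda_k\le\lambda_2$ are inherited.

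Next I would transfer the standing assumptions to $u_k$. From $|\nabla u_k(x)|=|\nabla u(x_k+\rho_k x)|\le C_{Lip}$ the sequence is uniformly Lipschitz. For non-degeneracy, if $y\in\partial\{u_k>0\}$ then $x_k+\rho_k y\in\partial\{u>0\}$, hence
\[
\sup_{B_r(y)} u_k=\frac1{\rho_k}\sup_{B_{\rho_k r}(x_k+\rho_k y)}u\ge c_0\,r\qquad\text{whenever }\rho_k r\le r_0,
\]
so on any fixed ball $B_R$ the $u_k$'s are eventually non-degenerate with the same constant $c_0$. Applying Lemma~\ref{proplim} to $\{u_k\}$ on $B_R$ for each $R$ and extracting a diagonal subsequence yields (1), (2) and (5); in particular $0\in\partial\{u_0>0\}$ because $u_k(0)=0$ and the uniform non-degeneracy at $0$ prevents $u_0\equiv 0$ in any neighborhood of the origin. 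Item (7) is also immediate: the uniform Lipschitz bound and the uniform non-degeneracy both pass to the uniform limit $u_0$.

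For (3) and (6) I would invoke the interior $C^{1,\alpha}$ estimates for $p_k(x)$-harmonic functions of Acerbi--Mingione \cite{AM} (already used in the proof of Theorem~\ref{cotaalpha1}). On any compact $K\subset\{u_0>0\}$, item (1) forces $K\subset\{u_k>0\}$ for $k$ large, so by Proposition~\ref{solucion} applied to the rescaled minimizer one has $\Delta_{p_k(x)}u_k=0$ on a neighborhood of $K$. The $C^{1,\alpha}$ bound depends only on $p_{min}$, $p_{max}$, the common H\"older modulus of the $p_k$'s and the $L^\infty$ bound $\|u_k\|_{L^\infty(B_R)}\le C_{Lip}R$, all uniform in $k$. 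By Arzel\`a--Ascoli, $\nabla u_k\to \nabla u_0$ uniformly on compact subsets of $\{u_0>0\}$, which is (3); passing to the limit in
\[
\int |\nabla u_k|^{p_k(x)-2}\nabla u_k\cdot\nabla\varphi\,dx=0,\qquad \varphi\in C_0^\infty(K),
\]
using the uniform convergence $p_k\to p(x_0)$ together with $\nabla u_k\to\nabla u_0$, yields (6).

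Finally, for (4) I would decompose $\R^N=\{u_0>0\}\cup\mathrm{int}\{u_0=0\}\cup\partial\{u_0>0\}$. On $\{u_0>0\}$ item (3) gives the pointwise convergence. On the interior of $\{u_0=0\}$, pick a ball $B_r(x)$ with closure in that interior; by (1), $B_{r/2}(x)\cap\partial\{u_k>0\}=\emptyset$ for $k$ large, while by (2) $B_{r/2}(x)$ cannot lie entirely in $\{u_k>0\}$, so by connectedness $B_{r/2}(x)\subset\{u_k=0\}$ for $k$ large and hence $\nabla u_k=0=\nabla u_0$ a.e.\ there. The remaining set $\partial\{u_0>0\}$ has Lebesgue measure zero by Remark~\ref{rema} applied to $u_0$, which by (7) satisfies the hypotheses of Theorem~\ref{densprop}. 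The main obstacle is the uniform $C^{1,\alpha}$ control across the varying exponents $p_k(x)$, which is exactly what the common H\"older modulus provides; without H\"older continuity of $p$ one cannot pass the nonlinear operator through the limit.
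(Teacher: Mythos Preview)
Your proposal is correct and follows essentially the same route as the paper: reduce (1), (2), (5) to Lemma~\ref{proplim} after rescaling, obtain (3) and (6) from the uniform interior $C^{1,\alpha}$ estimates of \cite{AM} for the $p_k(x)$-harmonic functions $u_k$, and get (7) by passing the Lipschitz and non-degeneracy constants through the uniform limit. Your treatment of (4) via the decomposition $\{u_0>0\}\cup\mathrm{int}\{u_0=0\}\cup\partial\{u_0>0\}$ is exactly the ``ideas similar to pp.~19--20 of \cite{ACF}'' the paper invokes; just note that when you appeal to Remark~\ref{rema} for $u_0$ you only need the \emph{first} density inequality in Theorem~\ref{densprop}, whose proof uses only Lipschitz continuity and non-degeneracy, so no circularity with Lemma~\ref{minblow} arises.
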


\begin{proof}
(1), (2) and (5) follow from Lemma \ref{proplim}. For the proof
of (3) and (4) we use that $\nabla u_k$ are uniformly H\"older continuous
in compact subsets of $\{u_0>0\}$ and ideas similar to those in  pp. 19-20 in \cite{ACF}. (6) follows from (3) and the fact that $\Delta_{P_k(x)}u_k=0$ in $\{u_k>0\}$ with $p_k(x)=p(x_0+\rho_kx)\to p(x_0)$ uniformly in compact sets of $\R^N$. (7) follows immediately from the uniform convergence of $u_k$ and the fact that they are all non-degenerate with constant $c_0$.
\end{proof}
\begin{lema}\label{minblow}
Let  $u$ be a non-degenerate, locally Lipschitz continuous
minimizer with $u(x_m)=0$, $x_m\rightarrow x_0\in \Omega$. Then, any blow up
limit $u_0$ respect to $B_{\rho_m}(x_m)$ is a
minimizer of
$\mathcal{J}$ corresponding to  $p\equiv p(x_0)$ and $\lambda\equiv\lambda(x_0)$ in any ball.
\end{lema}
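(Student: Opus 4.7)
The plan is to follow the standard blow-up/competitor argument in the variable-exponent setting, exploiting the convergence properties of $u_m$ collected in Lemma \ref{propblowup} together with the uniform convergence $p_m(x) := p(x_m + \rho_m x) \to p(x_0)$ and $\lambda_m(x) := \lambda(x_m + \rho_m x) \to \lambda(x_0)$ on compact sets of $\R^N$, which follows from the H\"older continuity of $p$ and the continuity of $\lambda$. Fix a ball $B_R \subset \R^N$ and a competitor $v$ with $v - u_0 \in W_0^{1,p(x_0)}(B_R)$; since $p(x_0)$ is a constant, $C_c^\infty(B_R)$ is dense in $W_0^{1,p(x_0)}(B_R)$, so we may reduce to the case $v - u_0 \in C_c^\infty(B_R)$ with support in $B_{R-\eta}$ for some $\eta > 0$. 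The goal is to show
$$
\int_{B_R}\frac{|\nabla u_0|^{p(x_0)}}{p(x_0)} + \lambda(x_0)\chi_{\{u_0>0\}}\,dx \le \int_{B_R}\frac{|\nabla v|^{p(x_0)}}{p(x_0)} + \lambda(x_0)\chi_{\{v>0\}}\,dx.
$$

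For $0 < \delta < \eta$ take a cutoff $\zeta_\delta \in C_c^\infty(B_R)$ with $\zeta_\delta \equiv 1$ on $B_{R-\delta}$ and $|\nabla \zeta_\delta| \le C/\delta$, and define $v_m := \zeta_\delta v + (1 - \zeta_\delta) u_m$ in $B_R$. Since $\zeta_\delta$ vanishes near $\partial B_R$, $v_m = u_m$ there, and the rescaling $V_m(y) := \rho_m v_m((y - x_m)/\rho_m)$, extended by $u$ outside $B_{\rho_m R}(x_m)$, is an admissible perturbation of $u$. Applying the minimality of $u$ and changing variables $y = x_m + \rho_m x$ yields
$$
\int_{B_R}\frac{|\nabla u_m|^{p_m(x)}}{p_m(x)} + \lambda_m \chi_{\{u_m>0\}}\,dx \le \int_{B_R}\frac{|\nabla v_m|^{p_m(x)}}{p_m(x)} + \lambda_m \chi_{\{v_m>0\}}\,dx.
$$
On the left-hand side, the uniform convergence $p_m \to p(x_0)$ and $\lambda_m \to \lambda(x_0)$, the lower semicontinuity of the Dirichlet term (via the convexity inequality \eqref{G}), and the pointwise bound $\chi_{\{u_0>0\}} \le \liminf_m \chi_{\{u_m>0\}}$ (immediate from the uniform convergence of $u_m$) combined with Fatou give $\liminf_m \mathrm{LHS} \ge \int_{B_R} \frac{|\nabla u_0|^{p(x_0)}}{p(x_0)} + \lambda(x_0) \chi_{\{u_0>0\}}\,dx$.

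On the right-hand side, split $B_R$ as $B_{R-\delta} \cup A_\delta$ with $A_\delta := B_R \setminus B_{R-\delta}$. On $B_{R-\delta}$ one has $v_m \equiv v$, so dominated convergence delivers the limit $\int_{B_{R-\delta}} \frac{|\nabla v|^{p(x_0)}}{p(x_0)} + \lambda(x_0)\chi_{\{v>0\}}\,dx$. On $A_\delta$ the support condition on $v - u_0$ gives $v \equiv u_0$ there, so $v_m = \zeta_\delta u_0 + (1-\zeta_\delta) u_m$ and $|\nabla v_m| \le 2C_{Lip} + (C/\delta)\|u_0 - u_m\|_{L^\infty(A_\delta)}$; hence
$$
\int_{A_\delta}\frac{|\nabla v_m|^{p_m(x)}}{p_m(x)} + \lambda_m \chi_{\{v_m>0\}}\,dx \le C\bigl(C_{Lip}^{p_+} + \lambda_2\bigr)|A_\delta| + C\bigl(\|u_0 - u_m\|_{L^\infty(A_\delta)}/\delta\bigr)^{p_+}|A_\delta|,
$$
which, after letting first $m \to \infty$ (the second term vanishes by uniform convergence) and then $\delta \to 0$ (the first vanishes since $|A_\delta| \to 0$), contributes zero. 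Combining the two inequalities and using the arbitrariness of $v$ and $B_R$ proves the lemma. The main technical point is the lower semicontinuity of the variable-exponent Dirichlet energy with moving exponent $p_m$; this reduces cleanly to the standard lower semicontinuity for the constant exponent $p(x_0)$ because $p_m \to p(x_0)$ uniformly, so $\bigl|\int |\nabla u_m|^{p_m} - \int |\nabla u_m|^{p(x_0)}\bigr|$ is $o(1)$ uniformly in $m$ thanks to the Lipschitz bound on $u_m$. The reduction to $v - u_0$ of compact support is the key trick that avoids any Hardy-type estimate in the transition annulus.
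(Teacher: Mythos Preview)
Your argument follows the blow-up/competitor scheme of \cite{AC} (which is all the paper invokes), and the core of it --- the cutoff construction of $v_m$, the annulus estimate exploiting $v\equiv u_0$ there, and the lower semicontinuity on the left using the uniform Lipschitz bound to swap $p_m$ for $p(x_0)$ --- is correct and well organized. The gap is in your opening reduction. Density of $C_c^\infty(B_R)$ in $W_0^{1,p(x_0)}(B_R)$ does \emph{not} justify restricting to $v-u_0\in C_c^\infty(B_R)$: if $\phi_k\to v-u_0$ in $W^{1,p(x_0)}$ then $\int|\nabla(u_0+\phi_k)|^{p(x_0)}\to\int|\nabla v|^{p(x_0)}$, but for the measure term you only get $\liminf_k|\{u_0+\phi_k>0\}|\ge|\{v>0\}|$, the wrong inequality. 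A smooth approximant can be strictly positive on all of $\{v=0\}$, so $\limsup_k\J_0(u_0+\phi_k)\le\J_0(v)$ fails in general.

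You are right that some reduction is needed in the variable-exponent setting: with a bare $v-u_0\in W_0^{1,p(x_0)}(B_R)$ the competitor $v_m$ may not lie in $W^{1,p_m(\cdot)}$ at all (since $p_m$ can exceed $p(x_0)$), so the \cite{AC} choice $v_m=v+(1-\zeta_\delta)(u_m-u_0)$ also breaks down. A clean repair is: first reduce to $0\le v\le M$ by truncation (this does not increase $\J_0$ and preserves $\{v>0\}$); then extend $v$ by $u_0$ to a larger ball $B_{R'}$, so that $v\equiv u_0$ on $B_{R'}\setminus\overline{B_R}$ and your annulus estimate applies verbatim with $\delta<R'-R$; finally, to obtain admissibility of $v_m$, approximate $v-u_0$ in $W^{1,p(x_0)}_0(B_R)$ by $\phi_k\in C_c^\infty(B_R)$ and run the argument with $u_0+\phi_k$ in place of $v$ on $B_R$, yielding $\J_0(u_0;B_R)\le\J_0(u_0+\phi_k;B_R)$. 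The passage $k\to\infty$ to reach a general $v$ then takes place entirely in the \emph{constant}-exponent problem, but it still faces the $\chi$-term obstruction above; one must either invoke a separate approximation lemma controlling $|\{u_0+\phi_k>0\}|$, or observe that every subsequent use of this lemma in the paper (Lemmas~\ref{constant1}--\ref{const}, Theorem~\ref{blow3}) tests $u_0$ only against explicit Lipschitz competitors, for which your argument is already complete.
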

\begin{proof}
See \cite{AC}.
\end{proof}

\medskip

In the sequel we will call $\lambda^*(x)=\Big(\frac{p(x)}{p(x)-1}\,\lambda(x)\Big)^{1/p(x)}$.

We have,

\begin{lema}\label{constant1}
Let $u$ be a non-degenerate, Lipschitz continuous, local minimizer in $\R^N$ corresponding to  $p(x)\equiv p_0$ and $\lambda(x)\equiv\lambda_0$, such that $u=\lambda_1\langle
x,\nu_0\rangle^-$ in $B_{R}$, with $R>0$, $0<\lambda_1<\infty$
and $\nu_0$ a unit vector. Then, $\lambda_1=\Big(\frac{p_0}{p_0-1}\,\lambda_0\Big)^{1/{p_0}}$.
\end{lema}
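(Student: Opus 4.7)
The plan is to exploit the minimality of $u$ by performing an \emph{inner (domain) variation} supported in $B_R$, which forces an Euler--Lagrange boundary condition that pins down $\lambda_1$. After choosing an orthonormal frame with $\nu_0 = e_N$, the hypothesis reduces to $u(y) = -\lambda_1 y_N$ on $\{y_N<0\}\cap B_R$ and $u\equiv 0$ on $\{y_N\ge 0\}\cap B_R$. For a test vector field $\eta \in C_0^\infty(B_R;\R^N)$ and $|t|$ small, the map $\Psi_t(y) = y + t\eta(y)$ is a diffeomorphism equal to the identity off $B_R$. Setting $u_t := u\circ \Psi_t^{-1}$, the competitor $u_t$ agrees with $u$ outside $B_R$, so the fact that $u$ minimizes $\mathcal J$ in any ball containing $B_R$ gives $\frac{d}{dt}\mathcal J(u_t)\big|_{t=0} = 0$.

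The main step is to compute this first variation. After the change of variables $x = \Psi_t(y)$, using $(D\Psi_t)^{-1} = I - tD\eta + O(t^2)$ and $\det D\Psi_t = 1 + t\,\mathrm{div}\,\eta + O(t^2)$, one finds on $\{y_N<0\}\cap B_R$ that
\[ |\nabla u_t(x)|^{p_0}\,\det D\Psi_t(y) \;=\; \lambda_1^{p_0}\bigl(1 + t[\mathrm{div}\,\eta - p_0\,\partial_N \eta^N]\bigr) + O(t^2), \]
since $\partial_j u = -\lambda_1 \delta_{jN}$ collapses the cross terms $\sum_{i,j}\partial_i u\,\partial_j u\,\partial_i \eta^j$ to $\lambda_1^2 \partial_N \eta^N$. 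Likewise $\chi_{\{u_t>0\}}(x)\,dx = \chi_{\{u>0\}}(y)(1 + t\,\mathrm{div}\,\eta)\,dy + O(t^2)$. Equating the coefficient of $t$ in $\mathcal J(u_t)$ to zero yields
\[ \int_{\{y_N<0\}\cap B_R} \Bigl(\frac{\lambda_1^{p_0}}{p_0} + \lambda_0\Bigr)\mathrm{div}\,\eta \;-\; \lambda_1^{p_0}\,\partial_N \eta^N \, dy = 0. \]
An integration by parts, using that $\eta$ vanishes on $\partial B_R$ and on $y_N\to-\infty$, turns both $\int_{\{y_N<0\}}\mathrm{div}\,\eta\, dy$ and $\int_{\{y_N<0\}}\partial_N \eta^N\, dy$ into the same surface integral $\int_{\{y_N=0\}\cap B_R}\eta^N(y',0)\, dy'$. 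Hence the identity reduces to
\[ \int_{\{y_N=0\}\cap B_R} \Bigl[\frac{\lambda_1^{p_0}}{p_0} + \lambda_0 - \lambda_1^{p_0}\Bigr]\eta^N(y',0)\, dy' \;=\; 0 \]
for every admissible $\eta^N$, which forces $\lambda_1^{p_0}(p_0-1)/p_0 = \lambda_0$, i.e.\ $\lambda_1 = \bigl(\tfrac{p_0}{p_0-1}\lambda_0\bigr)^{1/p_0}$, exactly $\lambda^*(x_0)$.

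The only technical point is the differentiability of $t\mapsto \mathcal J(u_t)$ at $t=0$ with the formal expansion above. This is straightforward here because on $B_R$ the function $u$ is piecewise affine, so $u_t$ and $\nabla u_t$ depend smoothly on $t$ on each side of the moving free boundary $\Psi_t(\{y_N=0\})$, while everything outside $B_R$ is $t$-independent; the $O(t^2)$ remainders are uniformly bounded on $B_R$. Thus the formal first-variation computation is rigorous, and the essential work---the explicit linear algebra that collapses all cross terms to $\partial_N\eta^N$---is precisely the one sketched above, made possible by $\nabla u$ being aligned with $e_N$ on the positive side of the plane.
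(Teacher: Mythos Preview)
Your proof is correct. The inner (domain) variation argument you carry out is the standard way to extract the free boundary condition for minimizers of this type, and your computation is accurate: after aligning $\nu_0=e_N$, the only nontrivial term in the expansion of $|\nabla u_t|^{p_0}$ is $\lambda_1^{p_0}\,p_0\,\partial_N\eta^N$, and the two surface integrals on $\{y_N=0\}$ indeed coincide, yielding $\frac{p_0-1}{p_0}\lambda_1^{p_0}=\lambda_0$.

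The paper does not give its own argument here; it simply refers to \cite{MW1} (the Orlicz-space analogue), where the corresponding lemma is also obtained via an inner variation identity. So your approach is essentially the same one the authors have in mind. One small cosmetic remark: in your last line you write ``exactly $\lambda^*(x_0)$''; there is no $x_0$ in the statement---you mean the constant $\lambda^*$ associated to $(p_0,\lambda_0)$.
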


\begin{proof} See \cite{MW1}.
\end{proof}

\begin{lema}\label{distB}
Let $u$ be a locally Lipschitz, non-degenerate local minimizer in $B_1$ with power $p(x)$ H\"older and coefficient $\lambda(x)$ continuous. Let $x_0\in \partial\{u>0\}\cap B_1$ such that there exists a ball $B$ contained in $B_1\cap\{u=0\}$ touching $\partial\{u>0\}$ at the point $x_0$. Then,
$$
\limsup_{\stackrel{x\to x_0}{u(x)>0}}\frac{u(x)}{dist(x,B)}=\lambda^*(x_0).
$$
\end{lema}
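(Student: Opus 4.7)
Let $\alpha$ denote the limsup. The strategy is a blow-up argument terminating in Lemma \ref{constant1}.

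First I would establish $0<\alpha<\infty$: the Lipschitz bound $u(x)\le C_{\mathrm{Lip}}\,\mathrm{dist}(x,B)$ (since $u\equiv0$ on $B$) gives $\alpha\le C_{\mathrm{Lip}}$, while the non-degeneracy of Corollary \ref{ucr} produces points $z_k\in B_{r_k}(x_0)$ with $u(z_k)\ge c_0 r_k\ge c_0\,\mathrm{dist}(z_k,B)$, hence $\alpha\ge c_0$. Next, choose a sequence $y_k\to x_0$ with $u(y_k)>0$ and $u(y_k)/\mathrm{dist}(y_k,B)\to\alpha$, let $x_k\in\partial B$ be the point of $B$ closest to $y_k$, and set $\rho_k:=|y_k-x_k|=\mathrm{dist}(y_k,B)\to 0$. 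Then $x_k\to x_0$ and, denoting by $\nu_0$ the inward unit normal to $B$ at $x_0$, the rescaled vector $\xi_k:=(y_k-x_k)/\rho_k$ satisfies $|\xi_k|=1$ and $\xi_k\to-\nu_0$.

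Consider the blow-up $u_k(x):=u(x_k+\rho_k x)/\rho_k$. Since $u(x_k)=0$ and $x_k\to x_0$, Lemma \ref{propblowup} (together with Lemma \ref{minblow}) gives, along a subsequence, $u_k\to u_0$ locally uniformly, where $u_0$ is a global Lipschitz non-degenerate minimizer of $\mathcal J$ in $\R^N$ with constant exponent $p_0:=p(x_0)$ and constant coefficient $\lambda_0:=\lambda(x_0)$. Because the rescaled balls $B_k:=(B-x_k)/\rho_k$ converge in the Hausdorff sense to the half-space $H:=\{\langle x,\nu_0\rangle>0\}$ and $u_k\equiv0$ on $B_k$, we deduce $u_0\equiv0$ on $\overline H$. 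Moreover $u_0(-\nu_0)=\lim_k u_k(\xi_k)=\lim_k u(y_k)/\rho_k=\alpha$.

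The key observation is that the limsup bound transfers to $u_0$. For any $\tilde x\in H^c$ with $u_0(\tilde x)>0$, setting $x:=x_k+\rho_k\tilde x\to x_0$, we have $u(x)>0$ for large $k$ and $u(x)/\mathrm{dist}(x,B)=u_k(\tilde x)/\mathrm{dist}(\tilde x,B_k)\to u_0(\tilde x)/\mathrm{dist}(\tilde x,H)$; the definition of $\alpha$ then forces $u_0(\tilde x)\le\alpha\,\langle \tilde x,-\nu_0\rangle^+$. Set $v(x):=\alpha\,\langle x,-\nu_0\rangle^+$; then $v\ge u_0$ on $\R^N$ with equality at $-\nu_0$. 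In a small ball $B_r(-\nu_0)\subset H^c$ where $u_0>0$ (by continuity, using $u_0(-\nu_0)=\alpha>0$), both $u_0$ and $v$ are $p_0$-harmonic (the first by Proposition \ref{solucion}, the second trivially). Since $|\nabla v|\equiv\alpha>0$, the linearization of the $p_0$-Laplacian around $v$ is uniformly elliptic, and the strong maximum principle applied to the nonnegative difference $v-u_0$ yields $v\equiv u_0$ in $B_r(-\nu_0)$, i.e., $u_0(x)=\alpha\,\langle x,\nu_0\rangle^-$ in this ball. Lemma \ref{constant1}, applied to $u_0$ with $\lambda_1=\alpha$, then gives $\alpha=\big(\tfrac{p_0}{p_0-1}\lambda_0\big)^{1/p_0}=\lambda^*(x_0)$, as claimed.

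The main obstacle I anticipate is the passage from the inequality $u_0\le v$ to equality in a neighborhood of the touching point $-\nu_0$: this is where one needs the touching point to sit in the open positive set of $u_0$ (guaranteed by centering the blow-up on $\partial B$ rather than at $x_0$, so that the value $\alpha$ is attained away from the free boundary), together with the fact that the $p$-Laplacian linearizes to a uniformly elliptic operator exactly because $v$ has nonvanishing gradient. The remaining non-trivial step is the transfer of the global limsup ratio under scaling, which relies on the Hausdorff convergence $B_k\to H$ so that the distance functions converge locally uniformly off $\overline H$.
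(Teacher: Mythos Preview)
Your approach is correct and is essentially the one the paper has in mind (it defers to \cite{MW1}, where exactly this blow-up argument, terminating in the analogue of Lemma~\ref{constant1}, is carried out): center the rescaling on the nearest point of $\partial B$, pass to a half-plane limit $u_0$, transfer the limsup to get $u_0\le v:=\alpha\langle x,\nu_0\rangle^-$ with contact at $-\nu_0$, upgrade to equality via the strong maximum principle, and read off $\alpha=\lambda^*(x_0)$.

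There is, however, one genuine gap in your final step. Lemma~\ref{constant1} requires $u_0=\alpha\langle x,\nu_0\rangle^-$ in a ball $B_R$ \emph{centered at the origin}; the point is that the proof of that lemma is a variational comparison that perturbs the free boundary $\{x\cdot\nu_0=0\}$, so the ball must contain free boundary points. You only obtain $u_0=v$ in a small ball $B_r(-\nu_0)$, which for $r<1$ sits entirely inside $\{x\cdot\nu_0<0\}$ and carries no free boundary of $v$. So Lemma~\ref{constant1} cannot be invoked directly from that.

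The fix is the standard propagation step: in the connected component $U$ of $\{u_0>0\}$ containing $-\nu_0$, both $u_0$ and $v$ are $p_0$-harmonic and $u_0\le v$. The set $\{u_0=v\}\cap U$ is closed in $U$ by continuity and open in $U$ by the very same strong-maximum-principle argument you gave (at any interior touching point the gradients agree, hence the linearized operator is uniformly elliptic), so $u_0=v$ on all of $U$. If $U\subsetneq\{x\cdot\nu_0<0\}$, pick $x_2\in\partial U\cap\{x\cdot\nu_0<0\}$; then $u_0(x_2)=0$ (as $x_2\in\partial\{u_0>0\}$) while continuity of $u_0=v$ on $U$ forces $u_0(x_2)=v(x_2)>0$, a contradiction. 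Hence $U=\{x\cdot\nu_0<0\}$ and, together with $u_0\equiv0$ on $\{x\cdot\nu_0\ge0\}$, you get $u_0=\alpha\langle x,\nu_0\rangle^-$ on all of $\R^N$, after which Lemma~\ref{constant1} applies.
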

\begin{proof} See, for instance \cite{MW1} for the idea of the proof. Here we use  Lemmas \ref{minblow} and \ref{constant1}.
\end{proof}

\begin{lema}\label{const}
Let $u\in \K$ be a minimizer. Then, for every  $x_0\in
\Omega\cap\partial\{u>0\}$
\begin{align}
& \limsup_{\stackrel{x\to x_0}{u(x)>0}} |\nabla u(x)| =
\lambda^*(x_0).
\end{align}
\end{lema}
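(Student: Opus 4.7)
The plan is to prove the two inequalities
\[
\limsup_{\stackrel{x\to x_0}{u(x)>0}}|\nabla u(x)|\geq \lambda^*(x_0)
\quad\text{and}\quad
\limsup_{\stackrel{x\to x_0}{u(x)>0}}|\nabla u(x)|\leq \lambda^*(x_0)
\]
separately, using blow-up arguments together with the asymptotic structure at reduced free boundary points and the half-space classification in Lemma \ref{constant1}.

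For the lower bound I would first note that $\partial_{red}\{u>0\}$ is dense in $\partial\{u>0\}$: Theorem \ref{rn-1} gives $\H(B_r(x_0)\cap\partial\{u>0\})>0$ for small $r>0$ and any $x_0\in\partial\{u>0\}$, while Lemma \ref{redcasitodo} says the non-reduced part of the free boundary has zero $\H$-measure, so reduced points accumulate on every free boundary point. Choose reduced points $\tilde x_n\to x_0$. At each $\tilde x_n$ the blow-up of $u$ converges (by Theorem \ref{solucion.debil} and Lemma \ref{minblow}) to the half-space profile $\lambda^*(\tilde x_n)\langle x,\nu(\tilde x_n)\rangle^-$, and Lemma \ref{propblowup} provides $C^1$ convergence of the blow-ups on compact subsets of the positivity set, so one can find $y\to\tilde x_n$ from inside $\{u>0\}$ with $|\nabla u(y)|\to \lambda^*(\tilde x_n)$. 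A diagonal procedure together with the continuity of $\lambda^*$ then produces a sequence $y_n\to x_0$ with $u(y_n)>0$ and $|\nabla u(y_n)|\to \lambda^*(x_0)$.

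For the upper bound, argue by contradiction: suppose $y_k\to x_0$ with $u(y_k)>0$ and $|\nabla u(y_k)|\to L>\lambda^*(x_0)$. Let $z_k\in\partial\{u>0\}$ be nearest to $y_k$ and set $d_k=|y_k-z_k|\to 0$, $z_k\to x_0$. By Lemma \ref{minblow} the blow-up $u_k(x)=u(z_k+d_k x)/d_k$ converges to a global minimizer $u_0$ on $\R^N$ with constant exponent $p_0=p(x_0)$ and coefficient $\lambda_0=\lambda(x_0)$, with $u_0(0)=0$. Passing to a subsequence so that $\xi_k=(y_k-z_k)/d_k\to\xi_0$ and using Lemma \ref{propblowup}, the ball $B_1(\xi_0)$ is contained in $\{u_0>0\}$, tangent to $\partial\{u_0>0\}$ at $0$, and $|\nabla u_0(\xi_0)|=L$. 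A second blow-up $w_k(x)=u_0(\sigma_k x)/\sigma_k\to w_0$, itself a global minimizer, satisfies (from the interior ball condition at $0$) $\{w_0=0\}\subset H^-=\{x\cdot\xi_0\le 0\}$ and (from the blown-up ball filling the opposite half-space) $\{w_0>0\}\supset H^+=\{x\cdot\xi_0>0\}$; together with the nondegeneracy of $w_0$ (Corollary \ref{ucr}) and Theorem \ref{densprop} this forces $\{w_0>0\}=H^+$, and Lemma \ref{constant1} identifies $w_0(x)=\lambda^*(x_0)(x\cdot\xi_0)^+$. The $C^1$ convergence of $w_k$ on compact subsets of $H^+$ then gives $\nabla u_0(\sigma\xi_0)\to \lambda^*(x_0)\xi_0$ as $\sigma\to 0^+$, and a maximum principle applied to $|\nabla u_0|$ in the $p_0$-harmonic set $B_1(\xi_0)$ propagates the bound $\lambda^*(x_0)$ from $0$ to the center $\xi_0$, contradicting $|\nabla u_0(\xi_0)|=L>\lambda^*(x_0)$.

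The main obstacle I foresee is the rigidity step identifying $\{w_0>0\}$ with $H^+$ exactly: the direct inclusions from blow-up only give $\{w_0>0\}\supset H^+$ and $\{w_0=0\}\subset H^-$, and ruling out that $\{w_0>0\}$ spills beyond $H^+$ into $\{x\cdot\xi_0<0\}$ requires a Liouville-type classification of global minimizers on $\R^N$ with constant coefficients whose positivity set contains an open half-space. The subsequent transfer of the gradient bound from a neighborhood of $0$ on $\partial B_1(\xi_0)$ to the center $\xi_0$ by a maximum principle also demands the elliptic inequality satisfied by $|\nabla u_0|$ where $u_0$ is $p_0$-harmonic and Lipschitz, which is developed elsewhere in the paper.
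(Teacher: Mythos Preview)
Your lower-bound argument is circular. You invoke the asymptotic development at reduced boundary points (Theorem \ref{solucion.debil}, proved in the body as Theorem \ref{blow3}), but the paper explicitly lists Lemma \ref{const} among the ingredients used to prove Theorem \ref{blow3}. So you cannot use the asymptotic development \eqref{asim} here; at this stage of the paper only Lemmas \ref{propblowup}, \ref{minblow}, \ref{constant1} and \ref{distB} are available. The paper's own proof (following \cite{MW1}) proceeds with exactly these: one blows up at the free boundary point, obtains a global minimizer $u_0$ with constant exponent $p(x_0)$ and constant coefficient $\lambda(x_0)$, and then uses the touching-ball growth statement (Lemma \ref{distB}) together with the identification Lemma \ref{constant1} to pin down the value $\lambda^*(x_0)$---without ever invoking the full asymptotic development at reduced points.

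For the upper bound, your outline is in the right spirit, but the obstacle you flag is real and not resolved by what you wrote. Knowing only $B_1(\xi_0)\subset\{u_0>0\}$ and $0\in\partial\{u_0>0\}$ gives, after the second blow-up, $\{w_0>0\}\supset H^+$ and $0\in\partial\{w_0>0\}$, but nondegeneracy and positive density (Theorem \ref{densprop}) do not by themselves force $\{w_0>0\}=H^+$: they only guarantee that $\{w_0=0\}$ has positive density at $0$, not that it fills all of $H^-$. The \cite{MW1} argument the paper points to circumvents this by working instead with a ball in the \emph{zero} set touching the free boundary (which is what Lemma \ref{distB} is set up for) and by exploiting that for the constant-exponent limit $u_0$ the function $|\nabla u_0|$ is a subsolution of a homogeneous elliptic equation (the $H$ and $B$ terms in Lemma \ref{ecugradu} vanish when $p$ is constant), so the maximum-principle step you allude to can be carried out cleanly on the first blow-up $u_0$ without needing the full half-space classification of $w_0$.
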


\begin{proof} For the idea of the proof see, for instance \cite{MW1}. Here we use  Lemmas \ref{minblow} and \ref{constant1}.
\end{proof}

\begin{teo}\label{blow3}
Let $u$ be a minimizer, then for $\mathcal{H}^{N-1}-$a.e
$x_0\in\partial\{u>0\}$, the following properties hold,

 \begin{equation*}
 q_u(x_0)={\lambda^*(x_0)}^{p(x_0)-1}
\end{equation*}
and
\begin{equation}\label{asymp}
u(x)=\lambda^*(x_0)\langle x-x_0,\nu_u(x_0)\rangle^-+o(|x-x_0|)
\end{equation}
\end{teo}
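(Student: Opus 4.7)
Since $\H(\partial\{u>0\}\setminus\partial_{red}\{u>0\})=0$ by Lemma \ref{redcasitodo}, it suffices to establish both conclusions at an arbitrary $x_0\in\partial_{red}\{u>0\}$; set $\nu=\nu_u(x_0)$, $p_0=p(x_0)$ and $\lambda_0=\lambda(x_0)$. My plan is organized around the blow-up family $u_k(y)=u(x_0+\rho_k y)/\rho_k$ with $\rho_k\downarrow 0$. Lemma \ref{propblowup} produces, through a subsequence, a blow-up limit $u_0$ that is globally Lipschitz on $\R^N$, non-degenerate with the same constants, satisfies $\partial\{u_k>0\}\to\partial\{u_0>0\}$ in Hausdorff distance, has $\nabla u_k\to\nabla u_0$ uniformly on compact subsets of $\{u_0>0\}$ and a.e.\ on $\R^N$, and is $p_0$-harmonic on $\{u_0>0\}$; by Lemma \ref{minblow}, $u_0$ is moreover a local minimizer of the constant-coefficient energy $\int|\nabla v|^{p_0}/p_0+\lambda_0\chi_{\{v>0\}}\,dy$.

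Next I would identify $\{u_0>0\}$. Rescaling \eqref{carac} by $y=(x-x_0)/\rho_k$ yields $\chi_{\{u_k>0\}}\to\chi_H$ in $L^1_{\mathrm{loc}}(\R^N)$, where $H=\{\langle y,\nu\rangle<0\}$; comparing with Lemma \ref{propblowup}(2) forces $\{u_0>0\}=H$, and together with the Hausdorff convergence of the free boundaries, $\partial\{u_0>0\}=\partial H$. Hence $u_0\ge 0$ is Lipschitz on $\R^N$, $p_0$-harmonic on the half-space $H$, vanishes on $\partial H$, and is non-trivial by the inherited non-degeneracy. The decisive step is then to conclude that $u_0(y)=\lambda^*(x_0)\langle y,\nu\rangle^-$. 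For this I would combine, on one hand, Lemma \ref{distB} applied to $u_0$ with an osculating ball inside the closed half-space $\R^N\setminus H$ tangent to $\partial H$ at $0$ (letting the radius grow so that the distance to such balls approximates $|\langle y,\nu\rangle|$ on $H$), which forces
$$\limsup_{H\ni y\to 0}\frac{u_0(y)}{|\langle y,\nu\rangle|}=\lambda^*(x_0),$$
and on the other hand a comparison/sliding argument against the linear $p_0$-harmonic profile $\alpha\langle y,\nu\rangle^-$ in the spirit of \cite{AC,MW1}, using the maximum principle for $\Delta_{p_0}$ together with the non-degeneracy of $u_0$. Lemma \ref{constant1} then pins the slope at $\lambda^*(x_0)$.

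Since the limit is independent of the subsequence, the whole family $u_k$ converges uniformly on compact sets to $\lambda^*(x_0)\langle y,\nu\rangle^-$; translating back immediately gives the asymptotic development \eqref{asim} at $x_0$. To identify $q_u(x_0)$, I would insert the test function $\phi_k(x)=\psi\bigl((x-x_0)/\rho_k\bigr)$ into the representation formula in Theorem \ref{repteo} and rescale: the left-hand side becomes
$$-\rho_k^{N-1}\int_{\R^N}|\nabla u_k(y)|^{p_k(y)-2}\nabla u_k(y)\cdot\nabla\psi(y)\,dy, \qquad p_k(y)=p(x_0+\rho_k y),$$
while the right-hand side becomes $\rho_k^{N-1}\int_{\partial\{u_k>0\}}q_u(x_0+\rho_k y)\,\psi(y)\,d\H(y)$. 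Using Lemma \ref{propblowup}(3)-(4) to pass to the limit on the left, and a direct integration by parts on the explicit profile $u_0=\lambda^*(x_0)\langle y,\nu\rangle^-$, I obtain $(\lambda^*(x_0))^{p_0-1}\int_{\partial H}\psi\,d\H$. At an $\H$-Lebesgue point $x_0$ of the bounded density $q_u$ on the reduced boundary — which holds for $\H$-a.e.\ $x_0\in\partial_{red}\{u>0\}$ by standard Besicovitch differentiation combined with the density bounds in Theorem \ref{repteo}(3) — the right-hand side tends to $q_u(x_0)\int_{\partial H}\psi\,d\H$. Since $\psi$ is arbitrary, this yields $q_u(x_0)=(\lambda^*(x_0))^{p(x_0)-1}$.

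The main obstacle is the linearity step for $u_0$: proving that a Lipschitz, non-degenerate, $p_0$-harmonic function on $H$ vanishing on $\partial H$ must be the specific linear function $\lambda^*(x_0)\langle y,\nu\rangle^-$. This is a Liouville/boundary-regularity statement that depends in an essential way on the barrier and comparison machinery (Lemmas \ref{exp}, \ref{distB}, \ref{const}) and on the non-degeneracy (Corollary \ref{ucr}) already developed earlier in the paper; every other ingredient — the identification of the blow-up positivity set, the constant-coefficient minimality of $u_0$, the explicit computation of $\Delta_{p_0}$ of the linear profile, and the passage to the limit in the weak formulation — is routine once this linearity is in hand.
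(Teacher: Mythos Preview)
Your proposal is correct and follows essentially the same route as the paper's own (very brief) proof, which simply says ``follow the ideas of \cite{MW1} using Lemma~\ref{propblowup} items (6) and (7) and Lemmas~\ref{minblow}, \ref{constant1} and \ref{const}.'' You have in fact supplied a considerably more detailed roadmap than the paper does: the reduction to $\partial_{red}\{u>0\}$ via Lemma~\ref{redcasitodo}, the identification of $\{u_0>0\}$ with the half-space through \eqref{carac} and Lemma~\ref{propblowup}(2), the use of Lemmas~\ref{distB} and \ref{constant1} to pin the slope, and the rescaling of the representation formula to recover $q_u(x_0)$ are exactly the ingredients the paper has in mind, and your honest flagging of the linearity step for $u_0$ as the only nontrivial point is accurate.
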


\begin{proof} In order to prove \eqref{asymp} we follow the ideas of \cite{MW1}  using Lemma \ref{propblowup} items (6) and (7) and Lemmas \ref{minblow}, \ref{constant1} and \ref{const}.
\end{proof}

\medskip

\section{Regularity of the free boundary}

In this section we  assume that  $\lambda$ is H\"{o}lder continuous
 and $p$ Lipschitz with constant $L$, and therefore the corresponding $\lambda^*$
will also be H\"{o}lder continuous.
We denote by $C^*$ the constant of H\"older continuity of $\lambda^*$ and
 by $\alpha^*$ its H\"{o}lder exponent.

We prove the regularity of the free boundary of a
minimizer $u$ in a neighborhood of every ``flat'' free
boundary point. In particular, we prove the regularity in a
neighborhood  of every point in $\partial_{red}\{u>0\}$ where $u$
has the asymptotic development \eqref{asymp}. Then, if $u$ is a
minimizer, $\partial_{red}\{u>0\}$ is smooth and the remainder of
the free boundary has $\H-$  measure zero.

First, we  recall some definitions and then, we  point out the only
significant differences with the proofs in \cite{DP} with $p$ constant.
The rest of the proof of the regularity then follows as sections
6, 7, 8 and 9 of \cite{DP}.

\begin{remark}\label{T} In \cite{DP}, Sections 6, 7 and 8 the authors use the fact
that when $|\nabla u|\geq c$, $u$ satisfies  a linear
nondivergence uniformly elliptic equation, $Tu=0$. In our case we
have that  when $|\nabla u|\geq c$, $u$ is a solution of the
equation defined in \eqref{Tu}. As in those sections the
authors only use the fact that this operator is linear and
uniformly elliptic, then the results of those sections in
\cite{DP} extend to our case without any change.
\end{remark}

For the reader's convenience,  we  sketch here the proof of the
regularity of the free boundary by a series of steps  and we
write down the proofs in those cases in which we have to make
modifications.

\subsection{Flatness and nondegeneracy of the gradient}

\medskip

\begin{defi}[Flat free boundary points] Let $0<\sigma_+,
\sigma_-\le 1$ and $\tau>0$. We  say that $u$ is of class
$$
F(\sigma_+,\sigma_-; \tau)\quad \mbox{in}\quad B_\rho=B_\rho(0)\quad\mbox{with power }p(x)
$$
if $u$ is a local minimizer of $\J$ in $B_\rho$ with power $p(x)$,
\begin{enumerate}
\item $0\in \partial\{u>0\}$ and
$$
\begin{array}{ll}
u=0 & \mbox{for}\quad x_N\ge \sigma_+ \rho,\\
u(x)\ge -\lambda^*(0)(x_N+\sigma_-\rho) & \mbox{for}\quad x_N\le
-\sigma_-\rho.
\end{array}
$$
\item $|\nabla u|\le \lambda^*(0) (1+\tau)$ in $B_\rho$.
\end{enumerate}
If the origin is replaced by $x_0$ and the direction $e_N$ by the
unit vector $\nu$ we say that $u$ is of class
$F(\sigma_+,\sigma_-; \tau)$ in $B_\rho(x_0)$ in direction $\nu$.
\end{defi}

\begin{teo}\label{flat2} Let $p$ be Lipschitz continuous, $1<p_{min}\le p(x)\le p_{max}<\infty$, $\lambda$ H\"older continuous with $0<\lambda_1\le \lambda(x)\le \lambda_2<\infty$ and modulus of continuity $\omega_\lambda(r)=C_*r^{\alpha^*}$.  Then,
there exist $\sigma_0>0$ and $C_0>0$ such that if  $C_*\rho^{\alpha^*}\le \lambda^*(0)\sigma$ and  $0<\sigma<\sigma_0$,
$$
u\in F(\sigma,1; \sigma)\ in\ B_1 \mbox{ with power }p(x) \mbox{ and } |\nabla p|\le C\sigma \mbox{ in } B_\rho\ \ \mbox{implies}
$$
$$
u \in F(2\sigma,C_0\sigma;\sigma) \ in\ B_{\rho/2}.
$$
\end{teo}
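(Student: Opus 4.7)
The plan is to adapt to the variable-exponent setting the explicit-barrier flatness-improvement argument of \cite{DP}, Section~6. Writing out the conclusion, only two things are genuinely non-trivial to prove: the upper flatness improvement from $u\equiv 0$ on $\{x_N\ge \sigma\}\cap B_1$ to $u\equiv 0$ on $\{x_N\ge\sigma\rho\}\cap B_{\rho/2}$, and the new linear lower bound $u(x)\ge -\lambda^*(0)(x_N+C_0\sigma\rho/2)$ on $\{x_N\le -C_0\sigma\rho/2\}\cap B_{\rho/2}$. The gradient bound $|\nabla u|\le \lambda^*(0)(1+\sigma)$ is inherited from $B_{\rho/2}\subset B_1$, and the $\sigma_-=1$ hypothesis is automatic because $u\ge 0$.

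For the upper improvement, I would argue by contradiction using non-degeneracy and Harnack's inequality. Suppose there is a free boundary point $x_0\in B_{\rho/2}$ with $(x_0)_N>\sigma\rho$. By Corollary~\ref{ucr}, there is a point $z$ within distance $O(\sigma\rho)$ of $x_0$ with $u(z)\ge c_{\min}\sigma\rho$. Since $u\equiv 0$ on $\{x_N\ge\sigma\}\cap B_1$, I would chain Harnack's inequality for $p(x)$-harmonic functions (Remark~\ref{harna2}) along a sequence of balls inside $\{u>0\}\cap B_1$ from $z$ toward a point near $\{x_N=\sigma\}$, concluding that $u$ has to attain a comparable value arbitrarily close to $\{x_N=\sigma\}$, and consequently that $|\nabla u|$ exceeds $\lambda^*(0)(1+\sigma)$ somewhere in $B_1$, contradicting the flatness assumption for $\sigma$ small enough.

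For the lower improvement, I would construct a $p(x)$-subsolution barrier $v$ in an annular region $\mathcal{A}=B_\rho\setminus\overline{B_{\rho/4}(y_0)}$ with $y_0$ chosen on the positive $x_N$-axis, using as template the explicit radial $p_0$-harmonic function with $p_0=p(0)$. A logarithmic/exponential correction of the type appearing in Lemma~\ref{exp} turns the template into a genuine $p(x)$-subsolution; here the hypothesis $|\nabla p|\le C\sigma$ is precisely what makes the perturbation error $|\nabla p|\,|\log|\nabla v||$ (see Remark~\ref{T}) absorbable by the leading $p_0$-Laplace term. Tuning the amplitude so that $v\le u$ on $\partial\mathcal{A}$ --- using the newly-proved upper flatness on the top of $\mathcal{A}$ and $u\ge 0$ on the bottom --- the comparison principle for $p(x)$-harmonic functions (cf.\ \cite{AM} and Remark~\ref{T}) yields $v\le u$ throughout $\mathcal{A}$, and the explicit radial profile of $v$ near the hyperplane $\{x_N=\sigma\rho\}$ gives the required linear lower bound with slope $\lambda^*(0)$ and intercept of order $C_0\sigma\rho$.

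The main obstacle is controlling two independent perturbations from the constant-exponent case of \cite{DP}: the $p(x)$-correction, absorbed via $|\nabla p|\le C\sigma$, and the $\lambda^*(x)$-correction, absorbed via $C_*\rho^{\alpha^*}\le\lambda^*(0)\sigma$, which by H\"older continuity of $\lambda^*$ gives $|\lambda^*(x)-\lambda^*(0)|\le\lambda^*(0)\sigma$ throughout $B_\rho$. Both error budgets are $O(\sigma\lambda^*(0))$, exactly of the order of the flatness parameter, so they can be absorbed into the constants of the conclusion. Once this is verified, the remainder of the argument proceeds verbatim as in \cite{DP}, Section~6, and I would refer to that paper for the detailed algebraic manipulations on the explicit barrier rather than reproduce them.
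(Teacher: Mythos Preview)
Your proposal has a genuine gap in the lower-bound step, and a misidentification of what is nontrivial.

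First, the upper flatness is automatic: $u\in F(\sigma,1;\sigma)$ in $B_\rho$ already gives $u\equiv 0$ on $\{x_N\ge\sigma\rho\}\cap B_\rho$, and the target condition in $B_{\rho/2}$ with parameter $2\sigma$ requires $u\equiv 0$ on $\{x_N\ge 2\sigma\cdot\rho/2\}=\{x_N\ge\sigma\rho\}$, which is the same set. Your Harnack-chain argument for this is unnecessary.

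The real content is the lower bound $u(x)\ge -\lambda^*(0)(x_N+C_0\sigma\rho/2)$, and here your barrier scheme does not produce it. You propose a $p(x)$-subsolution $v$ in an annulus $\mathcal A=B_\rho\setminus\overline{B_{\rho/4}(y_0)}$ with $v\le u$ on $\partial\mathcal A$, using only $u\ge 0$ on the outer sphere and $u=0$ near the inner sphere. These constraints force $v\le 0$ on all of $\partial\mathcal A$, and since $v$ is a subsolution the comparison principle then gives $v\le 0$ throughout $\mathcal A$; you recover only $u\ge 0$, not a linear bound with slope $\lambda^*(0)$. Nothing in your data ties the amplitude of $v$ to $\lambda^*(0)$: the hypothesis $|\nabla u|\le\lambda^*(0)(1+\sigma)$ is an \emph{upper} bound on $u$, and the free boundary condition---the only place $\lambda^*$ enters as a lower constraint---is never invoked.

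The paper's argument, following \cite{DP}, supplies exactly this missing link. One solves $\Delta_{p(x)}v=0$ in a domain $D\setminus B_r(\xi)$ with $v=-\lambda^*(0)(1-\kappa\sigma)x_N$ on $\partial B_r(\xi)$ and shows, by contradiction, that if $v<u$ on $\partial B_r(\xi)$ then the free boundaries of $u$ and $\{v>0\}$ touch at some $z$, whence Lemma~\ref{distB} forces $|\nabla v(z)|\ge\lambda^*(z)\ge\lambda^*(0)(1-\sigma)$. A separate barrier $v_1-\kappa\sigma v_2$ bounds $|\nabla v(z)|\le\lambda^*(0)(1+C\sigma-c\kappa\sigma)$, a contradiction for large $\kappa$. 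This yields a point $x_\xi\in\partial B_r(\xi)$ with $u(x_\xi)\le v(x_\xi)$, hence $w(x_\xi)=\lambda^*(0)(1+\sigma)(\sigma-x_N)-u$ is $O(\sigma)$ at $x_\xi$; Harnack's inequality applied to $w$ (which satisfies $|\mathcal L w|\le C\sigma$ for the linear operator $\mathcal L$ of Remark~\ref{nodiver}) then spreads this to $w\le C\sigma$ on $B_r(\xi)$, giving the linear lower bound with the correct slope. The free boundary condition via Lemma~\ref{distB} is the mechanism that introduces $\lambda^*(0)$ as a lower slope, and it is absent from your outline.
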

\begin{proof} By rescaling, we may assume that $\rho=1$ and $osc_{B_1}\lambda^*\le C_*\rho^{\alpha^*}$.

Then, we proceed as in \cite{DP}, Lemmas 6.5, 6.6 and Theorem 6.3. One of the differences in our case is that $\lambda^*$ is not a constant. Moreover, we cannot assume that $\lambda^*(0)=1$. First, we
construct, for $\kappa>0$, a barrier $v$ as a solution to
$$
\begin{cases}
\Delta_{p(x)}v=0\qquad&\mbox{in }D\setminus B_r(\xi)\\
v=0\qquad&\mbox{on }\partial D\setminus B_1\\
v=\lambda^*(0)(1+\sigma)(\sigma-x_N)\qquad&\mbox{on }\partial D\cap B_1\\
v=-\lambda^*(0)(1-\kappa\sigma)x_N\qquad&\mbox{on }\partial B_r(\xi)
\end{cases}
$$

Here the set $D$ is constructed as in \cite{DP}. As in that paper, we want to prove that there exists $x_\xi\in\partial B_r(\xi)$ such that $v(x_\xi)\ge u(x_\xi)$ if $\kappa$ is large enough.

By contradiction, by Lemma \ref{distB}, if $v\le u$ on $\partial B_r(\xi)$ there holds that
$$
|\nabla v(z)|\ge \lambda^*(z)
$$
where $z\in \partial D\cap B_{1/2}\cap \partial\{u>0\}$.

Then,
$$
|\nabla v(z)|\ge \lambda^*(0)(1-\sigma).
$$

In order to get the contradiction we need the following estimate:
$$
|\nabla v(z)|\le \lambda^*(0)(1+C\sigma-c\kappa\sigma).
$$
For that purpose, we proceed again as in \cite{DP} by constructing a barrier for $v$ of the form $v_1-\kappa\sigma v_2$ where $v_1$ and $v_2$ are the same functions of \cite{DP}, Claim 6.8. One can check, as in \cite{DP}, that $v_1$ is a supersolution and $v_2$ is a subsolution to an elliptic equation in nondivergence form in such a way that $\Delta_{p(x)}(v_1-\kappa\sigma v_2)\le 0$. The difference in our case is that this equation has first order terms. But these terms are bounded by $L\sigma\log 2$ since by construction $\tfrac12\le|\nabla (v_1-\kappa\sigma v_2)|\le 2$.

In this way the results corresponding to Lemmas 6.5 and 6.6 in \cite{DP} are proved. In order to finish the proof of the theorem we proceed as in \cite{DP}, Theorem 6.3. We consider the function
$$
w(x)=\lambda^*(0)(1+\sigma)(\sigma-x_N)-u(x)\ge0\quad\mbox{in }B_{2r}(\xi)
$$
and prove that $w(x_\xi)\le C\sigma$ where $x_\xi\in \partial B_r(\xi)$ is
such $v(x_\xi)\ge u(x_\xi)$. Finally, in order to apply Harnack inequality to
get $w(x)\le C\sigma$ in $B_r(\xi)$ we observe that $w$ satisfies
$$
|\L w|\le C\sigma\qquad\mbox{in }B_{2r}(\xi)
$$
where $\L$ is the linear operator given in \eqref{nodiver2} such that $\L u=0$ (observe that at this stage we already know that $|\nabla u|\ge\lambda^*(0)/2$ in $B_{4r}(\xi)$.
\end{proof}

\begin{teo}\label{nodegegrad} Let $p$ be Lipschitz continuous, $1<p_{min}\le p(x)\le p_{max}<\infty$, $\lambda$ H\"older continuous with $0<\lambda_1\le \lambda(x)\le \lambda_2<\infty$ and modulus of continuity $\omega_\lambda(r)=C_*r^{\alpha^*}$.
For every $\delta>0$ there exist $\sigma_{\delta}>0$ and
$C_{\delta}>0$ such that if  $C_*\rho^{\alpha^*}\le \lambda^*(0)\sigma$, $0<\sigma<\sigma_{\delta}$,
$$
u\in F(\sigma,1; \sigma)\ in\ B_\rho \mbox{ with power } p(x) \mbox{ and } |\nabla p|\le C\sigma\ \ \mbox{implies}
$$
$$|\nabla u|\geq \lambda^*(0)(1-\delta)
\ in\ B_{\rho/2}\cap \{x_N\leq -C_{\delta} \sigma\}.
$$

\end{teo}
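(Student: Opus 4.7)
My plan is to adapt the argument from \cite{DP}, Section 7, exploiting Remark \ref{T}: whenever $|\nabla u|$ stays bounded away from zero and infinity, $u$ solves the linear, uniformly elliptic non-divergence equation $\L u = 0$ from \eqref{nodiver2}. The only new ingredient compared to \cite{DP} is a first-order term in $\L$ whose coefficients are controlled by $|\nabla p| \le C\sigma$, and therefore are of the same order as the quantities we track. After rescaling to $\rho = 1$, the goal is to show that in $B_{1/2} \cap \{x_N \le -C_\delta \sigma\}$ the gradient of $u$ stays close to its ``expected'' size $\lambda^*(0)$.

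First, the flatness conditions trap $u$ between two affine functions close to $\ell(x) = -\lambda^*(0) x_N$. The lower bound $u(x) \ge -\lambda^*(0)(x_N + \sigma)$ for $x_N \le -\sigma$ is given; for the matching upper bound, one uses $u=0$ on $\{x_N = \sigma\}$ together with $|\nabla u| \le \lambda^*(0)(1+\sigma)$ and vertical integration to obtain $u(x) \le \lambda^*(0)(1+\sigma)(\sigma - x_N)$ in $\{u>0\} \cap B_1$. Hence $\|u - \ell\|_{L^\infty} \le C\sigma$ on $\{u>0\} \cap B_1$, and in the region $\{x_N \le -C\sigma\} \cap B_{3/4}$ the function $u$ is strictly positive and $p(x)$-harmonic, with gradient bounded above.

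The core step is to establish the matching lower bound on $|\nabla u|$. Consider $W = \lambda^*(0)(1+\sigma) + \partial_N u$, which is nonnegative by the Lipschitz bound. Once $|\nabla u|$ is verified to be bounded away from zero on the region of interest, differentiating $\L u = 0$ in the $x_N$-direction shows that $W$ solves a linear uniformly elliptic equation with right-hand side and drift of size $|\nabla p| \le C\sigma$. The $L^\infty$-proximity $|u-\ell| \le C\sigma$ forces $W$ to be small in $L^1$-average on vertical segments contained in $\{u>0\}$, because along such a segment $\int \partial_N u \, dx_N$ differs from $-\lambda^*(0)$ times the length of the segment by $O(\sigma)$. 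Applying Harnack's inequality (with bounded right-hand side) to $W$ then propagates this smallness to a pointwise bound $W \le C\delta$ on $B_{1/2} \cap \{x_N \le -C_\delta \sigma\}$, yielding $|\nabla u| \ge -\partial_N u \ge \lambda^*(0)(1-\delta)$ after adjusting constants.

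The main obstacle is the circular dependence: in order to linearize and invoke Harnack, $|\nabla u|$ must already be bounded below on the region, which is essentially the conclusion. This is overcome, as in \cite{DP}, by a preliminary barrier argument of the same flavor as the one used in the proof of Theorem \ref{flat2}: one compares $u$ from below with a modification of $-\lambda^*(0)(1 - c\sigma)\,x_N$, corrected by an exponential term of the type provided by Lemma \ref{exp}, to get an initial pointwise lower bound $|\nabla u| \ge c_0 > 0$ on a large interior subregion. This is enough to activate the linearization and close the Harnack propagation. The first-order terms coming from $|\nabla p| \le C\sigma$ and the oscillation $C_*\rho^{\alpha^*} \le \lambda^*(0)\sigma$ of $\lambda^*$ enter only as right-hand sides of order $\sigma$ in the various estimates, and are absorbed by choosing $\sigma_\delta$ sufficiently small and $C_\delta$ sufficiently large, depending on $\delta$.
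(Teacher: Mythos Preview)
Your proposal is correct and follows essentially the same route as the paper, which simply defers to Theorem~6.4 of \cite{DP} via Remark~\ref{T}; you have correctly identified that the only modifications in the $p(x)$-setting are the first-order terms in the linearized operator (controlled by $|\nabla p|\le C\sigma$) and the oscillation of $\lambda^*$ (controlled by $C_*\rho^{\alpha^*}\le\lambda^*(0)\sigma$), both of which enter as $O(\sigma)$ right-hand sides and are absorbed by choosing $\sigma_\delta$ small and $C_\delta$ large. One minor point: the paper references Theorem~6.4 of \cite{DP}, not Section~7.
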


\begin{proof}
 The proof follows as  Theorem 6.4 in \cite{DP}.
\end{proof}

\medskip

\subsection{Nonhomogeneous blow-up}

\begin{lema}\label{9.1}
Let $u_k\in F(\sigma_k,\sigma_k;\tau_k) \in B_{\rho_k}$ with power $p_k(x)$ and coefficient $\lambda_k(x)$ such that $|\nabla p_k|\le L$, $1<p_{min}\le p_k(x)\le p_{max}<\infty$, $\lambda_k^*$  H\"older with exponent $\alpha^*$ and constant $C_*$, $0<\lambda_1\le \lambda_k(x)\le \lambda_2<\infty$. Assume
$\sigma_k \to 0$, $\tau_k \sigma_k^{-2}\to 0$ and $\rho_k^{\alpha^*}\leq
\rho_0\tau_k$ with $\rho_0>0$. For $y\in B_1'$, set
\begin{align*}&f_k^+(y)= \sup\{h: (\rho_k y, \sigma_k \rho_kh)\in
\partial\{u_k>0\}\},\\& f_k^-(y)= \inf\{h: (\rho_k y, \sigma_k
\rho_kh)\in \partial\{u_k>0\}\}.\end{align*} Then, for a
subsequence,
\begin{enumerate}
\item $f(y)=\limsup_{\stackrel{z\to y} {k\to \infty}}f_k^+(z)=
\liminf_{\stackrel{z\to y} {k\to \infty}}f_k^-(z)\ \ for\ all\
y\in B_1'.$

\smallskip

 Further, $f_k^+\to f$, $f_k^-\to f$ uniformly,
 $f(0)=0$, $|f|\leq 1$ and $f$ is continuous.

\medskip

 \item $f$ is
subharmonic.
\end{enumerate}
\end{lema}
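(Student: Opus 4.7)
The plan is to adapt the nonhomogeneous blow-up argument of \cite{AC, DP} to the present variable-exponent setting. The flatness $u_k\in F(\sigma_k,\sigma_k;\tau_k)$ confines $\partial\{u_k>0\}\cap B_{\rho_k}$ to the strip $\{|x_N|\le\sigma_k\rho_k\}$, so $f_k^\pm:B_1'\to [-1,1]$ are well defined with $f_k^-\le f_k^+$, and $f_k^-(0)\le 0\le f_k^+(0)$ since $0\in\partial\{u_k>0\}$. On the physical scale $\rho_k\to 0$ the exponent $p_k$ is nearly constant (because $|\nabla p_k|\le L$) and the oscillation of $\lambda_k^*$ is controlled by $\rho_k^{\alpha^*}\le \rho_0\tau_k$, so the analysis reduces, modulo lower-order errors, to the constant-coefficient case.

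For item (1), I would iterate Theorem \ref{flat2} together with the positive density of Theorem \ref{densprop} and the nondegeneracy of Corollary \ref{ucr} to show that the free boundary near $(\rho_k y,\sigma_k\rho_k f_k^\pm(y))$ satisfies improved flatness at all intermediate dyadic scales, uniformly in $k$ for $y\in B_{1/2}'$. This gives a $k$-uniform Hölder modulus of continuity for $f_k^\pm$, so by Arzelà--Ascoli a subsequence converges uniformly: $f_k^\pm\to f^\pm$ in $C(B_{1/2}')$. The same iterative improvement forces $f_k^+-f_k^-\to 0$ uniformly, so $f^+=f^-=:f$, which is continuous, $|f|\le 1$, and $f(0)=0$.

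For item (2), I would argue by contradiction. If $f$ is not subharmonic, some $C^2$ function $\psi$ touches $f$ strictly from above at an interior point $y_0$ with $\Delta\psi(y_0)<0$. Lifting back to the physical scale, set
\begin{equation*}
W_k(x)=-\lambda_k^*(0)\bigl(x_N-\sigma_k\rho_k\psi(x'/\rho_k)-c_k\sigma_k\rho_k\bigr),
\end{equation*}
with $c_k$ a translation parameter, and compare $W_k^+$ with $u_k$. In the region where $W_k>0$ we have $|\nabla W_k|\ge\tfrac12\lambda_k^*(0)$; by Remark \ref{T} the operator $\Delta_{p_k(x)}$ linearizes there to a uniformly elliptic nondivergence operator with first-order coefficients of order $L$. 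The leading term, proportional to $\sigma_k\rho_k^{-2}\lambda_k^*(0)\Delta\psi(y_0)<0$, dominates all correction terms coming from $|\nabla p_k|\le L$ and from the oscillation of $\lambda_k^*$, so $W_k$ is a local supersolution for large $k$. A standard sliding of $W_k$ from above produces a first contact point $x_k^*\in\partial\{u_k>0\}$ where Lemma \ref{distB} forces $|\nabla W_k(x_k^*)|\ge\lambda_k^*(x_k^*)$; after rescaling this lower bound contradicts the construction.

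The main obstacle is verifying that the lower-order contributions, in particular those arising from the $x$-dependence of $p_k$ (first-order terms of size $L\cdot|\log\sigma_k|$) and from the oscillation of $\lambda_k^*$ of size $\rho_k^{\alpha^*}$, are indeed negligible compared to the leading $\sigma_k\rho_k^{-2}\Delta\psi$ term. The scaling hypotheses $\tau_k\sigma_k^{-2}\to 0$ and $\rho_k^{\alpha^*}\le\rho_0\tau_k$ are tailored precisely for this absorption, after which the argument proceeds exactly as in \cite{DP}.
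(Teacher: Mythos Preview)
Your outline for item (1) is essentially the route the paper takes: it simply cites Lemma 7.3 of \cite{AC} (the analogue of Lemma 5.3 in \cite{ACF}), which is precisely the Theorem~\ref{flat2}--based iteration you describe.

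For item (2), however, your approach diverges from the paper and has a genuine gap. The paper does \emph{not} use a barrier/sliding argument. Instead, following \cite{AC} Lemma 7.5, it argues as follows: assume $g$ is \emph{harmonic} with $g>f$ on $\partial B_\rho'(y_0)$ and $f(y_0)>g(y_0)$; set $Z^+=\{(y,h):y\in B_\rho'(y_0),\ h>\sigma_k g(y)\}$ and use the Representation Theorem~\ref{repteo} together with $q_{u_k}=(\lambda_k^*)^{p_k-1}$ to compare $\H(\partial_{red}\{u_k>0\}\cap Z^+)$ with $\H(\{u_k>0\}\cap Z_0)$. The excess area estimate from \cite{AC} then forces
\[
c\sigma_k^2\le\Big[(\lambda_k^*(0))^{p_k^+-p_k^-}\Big(\tfrac{1+\tau_k}{1-C\rho_k^{\alpha^*}}\Big)^{p_k^+-1}-1\Big]\H(\{u_k>0\}\cap Z_0)\le C(\tau_k+\rho_k^{\alpha^*}),
\]
contradicting $\tau_k\sigma_k^{-2}\to0$ and $\rho_k^{\alpha^*}\le\rho_0\tau_k$.

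Your sliding barrier, as written, does not close. The function $W_k(x)=-\lambda_k^*(0)(x_N-\sigma_k\rho_k\psi(x'/\rho_k)-c_k\sigma_k\rho_k)$ has $|\nabla W_k|=\lambda_k^*(0)(1+O(\sigma_k^2))$, while $\lambda_k^*(x_k^*)=\lambda_k^*(0)(1+O(\rho_k^{\alpha^*}))$; the inequality $|\nabla W_k(x_k^*)|\ge\lambda_k^*(x_k^*)$ is therefore automatically satisfied and yields no contradiction. A De~Silva--type argument does exist, but it requires a barrier with a second-order correction in $x_N$ that forces the normal derivative on its zero set to be \emph{strictly below} $\lambda_k^*$; your linear-in-$x_N$ barrier carries no such mechanism. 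In addition, your quoted scale $\sigma_k\rho_k^{-2}$ for the leading term is off (it is $\sigma_k\rho_k^{-1}$ in the original scale, $\sigma_k$ after rescaling), and Lemma~\ref{distB} as stated needs a touching \emph{ball} in $\{u=0\}$, not a graph. The measure-theoretic route via the Representation Theorem avoids all of this and is what makes the paper's proof go through.
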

\begin{proof}
(1) is the analogue of Lemma 5.3 in \cite{ACF}. The proof is based
on Theorem 6.3 and is identical to the one of Lemma 7.3 in
\cite{AC}.

For the reader's convenience, we write down the proof of (2) that is a little bit different from the one in \cite{AC} since  we do not have the homogeneity of the operator so that we need to keep track of the coefficient $\lambda_k^*(0)$. Also, our assumption in this and the ongoing sections is that $\lambda(x)$ is H\"{o}lder continuous as compared to the assumption in \cite{AC}.

We assume by taking $\widetilde{u}_k(x)=u_k(\rho_k x)/\rho_k$,  $\tilde p_k(x)=p_k(\rho_k x)$  and $\tilde\lambda_k(x)= \lambda_k(\rho_k x)$
that $u_k\in F(\sigma_k,\sigma_k;\tau_k)$ in $B_1$ with power $\tilde p_k$ and coefficient $\tilde\lambda_k$. We drop the tildes but recall that now $|\nabla p_k|\le L\rho_k$ and $|\lambda_k^*(x)-\lambda_k^*(0)|\le C^*\rho_k^{\alpha^*}|x|^{\alpha^*}$.

Observe that by the H\"{o}lder continuity of the original $\lambda_k^*$ we have that,
\begin{equation}\label{holder*}
\lambda_k^*(x)\geq
\lambda_k^*(0)-C^*\rho_k^{\alpha^*}=\lambda_k^*(0) (1-C\rho_k^{\alpha^*})
\end{equation}

Let us assume, by contradiction, that there is a ball
$B'_{\rho}(y_0)\subset B_{1}$ and a harmonic function $g$ in a
neighborhood of this ball, such that
$$
g>f  \mbox{ on } \partial B'_{\rho}(y_0)\quad  \mbox{ and } \quad
f(y_0)>g(y_0).
$$
Let,
$$Z^{+} =\{x\in B_1\,/\, x=(y,h),\ y\in B'_{\rho}(y_0),
h>\sigma_k g(y)\},$$ and similarly $Z_0$ and $Z^-$. As in Lemma
7.5 in \cite{AC}, using  the same test function and the
Representation Theorem \ref{repteo}  we
arrive at,
\begin{equation}\label{igualrep}\int_{\{u_k>0\}\cap Z_0} |\nabla u_k|^{p_k-2} \nabla
u_k \cdot \nu\, d\H= \int_{\partial_{red}\{u_k>0\}\cap Z^+} q_{u_k}(x) \,
d\H.\end{equation} As $u_k\in F(\sigma_k,\sigma_k,\tau_k)$ we have
that $|\nabla u_k|\leq \lambda_k^*(0)(1+\tau_k)$ and,  by Theorem
\ref{blow3}, there holds that $q_{u_k}(x)={\lambda_k^*(x)}^{p_k(x)-1}$
for $\H- a.e$ point in $\partial_{red}\{u_k>0\}$. Therefore,

\begin{equation}\label{igualrep2}\int_{\{u_k>0\}\cap Z_0} |\nabla u_k|^{p_k-2} \nabla
u_k \cdot \nu\, d\H= \int_{\partial_{red}\{u_k>0\}\cap Z^+}
{\lambda_k^*}^{p_k-1} \, d\H.\end{equation}

Applying the estimate \eqref{holder*} to \eqref{igualrep2} and,  assuming for simplicity that $\lambda_k^*(0)\geq 1$ we have,
\begin{align*}
{\lambda_k^*(0)}^{p_k^--1}\,(1-C\rho_k^{\alpha^*})^{p_k^+-1}\,
\H&(\partial_{red}\{u_k>0\}\cap Z^+) \\
&\leq\int_{\{u_k>0\}\cap Z_0}
|\nabla u_k|^{p_k-2} \nabla u_k \cdot \nu\, d\H\\
& \leq {\lambda_k^*(0)}^{p_k^+-1}
(1+\tau_k)^{p_k^+-1}
\H(\{u_k>0\}\cap Z_0)
\end{align*}

Then we have,
\begin{equation}\label{desilambda1}
\H(\partial_{red}\{u_k>0\}\cap Z^+)\leq {\lambda_k^*(0)}^{p_k^+-p_k-}
\Big(\frac{1+\tau_k}{1-C^*\rho_k^{\alpha^*}}\Big)^{p_k^+-1}
\H(\{u_k>0\}\cap Z_0).
\end{equation}

On the other hand, by the excess area estimate in Lemma 7.5 in \cite{AC} we have that,
$$
\H(\partial_{red}E_k\cap Z)\geq \H(Z_0)+c\sigma_k^2,
$$
where $Z=B'_{\rho}(y_0)\times \mathbb{R}$ and $E_k=\{u_k>0\}\cup Z^-$.

We also have,
$$\H(\partial_{red}E_k\cap
Z)\leq \H(Z^+\cap\partial_{red}\{u_k>0\})+\H(Z_0\cap\{u_k=0\}).$$
Using these two inequalities and the fact that
$\H(Z_0\cap\partial\{u_k>0\})=0$ (if this is not true we replace
$g$ by $g+c_0$ for a small constant $c_0$) we have that,
\begin{equation}\label{exces}\H(\partial_{red}\{u_k>0\}\cap Z^+)\geq \H(Z_0\cap
\{u_k>0\} )+c\sigma_k^2.\end{equation} Finally by
\eqref{desilambda1} and \eqref{exces} we have that,
\begin{align*}
\H(\{u_k>0\}\cap Z_0)+c\sigma_k^2\leq&
{\lambda_k^*(0)}^{p_k^+-p_k-}
\Big(\frac{1+\tau_k}{1-C^*\rho_k^{\alpha^*}}\Big)^{p_k^+-1} \H(\{u_k>0\}\cap
Z_0).
\end{align*}
Therefore, since $p_k^+-p_k^-\le L\rho_k$,
$$
c\sigma_k^2\leq \Big[{\lambda^*_k(0)}^{p_k^+-p_k-}
\Big(\frac{1+\tau_k}{1-C^*\rho_k^{\alpha^*}}\Big)^{p_k^+-1}-1\Big]
\H(\{u_k>0\}\cap Z_0)\leq C
(\tau_k+\rho_k^{\alpha^*}).
$$

Observe that if $\lambda_k^*(0)\leq 1$, we arrive at the same estimate.

Finally, since $\rho^{\alpha^*}\leq
\rho_0 \tau_k$  this contradicts the fact that
$\frac{\tau_k}{\sigma_k^2}\to 0$ as $k\to \infty$.
\end{proof}

\begin{lema}
There exists a positive constant $C=C(N)$ such that, for any $y\in
B'_{r/2}$,
$$\int_0^{1/4} \frac{1}{r^2} \Big(\pint_{\partial
B'_r(y)}f-f(y)\Big)\leq C_1.$$

\end{lema}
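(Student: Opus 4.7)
The lemma is a quantitative companion to the subharmonicity of $f$ established in Lemma 9.1: the integrand $\pint_{\partial B'_r(y)} f - f(y)$ is pointwise nonnegative and tends to $0$ as $r \to 0$ (by continuity of $f$), so the content is a uniform upper bound on its integral against $dr/r^2$. My plan has two main ingredients: a Fubini reduction using the measure-representation of the spherical mean of a subharmonic function, and a density estimate for the Laplacian measure $\mu := \Delta f$.

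For the reduction, since $f$ is subharmonic in $B'_1 \subset \R^{N-1}$, the measure $\mu$ is a positive Radon measure, and the classical identity
$$\pint_{\partial B'_r(y)} f - f(y) \;=\; \frac{1}{(N-1)\omega_{N-1}}\int_0^r \frac{\mu(B'_s(y))}{s^{N-2}}\, ds$$
holds for smooth subharmonic $f$ and extends to the general case by approximation. Substituting into the integral under consideration and swapping the orders of integration gives
$$\int_0^{1/4}\frac{1}{r^2}\big(\pint_{\partial B'_r(y)} f - f(y)\big)\, dr \;\le\; \frac{1}{(N-1)\omega_{N-1}}\int_0^{1/4}\frac{\mu(B'_s(y))}{s^{N-1}}\, ds,$$
so the proof reduces to bounding the $s$-integral on the right by a constant depending only on $N$.

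For the density estimate, I would show $\mu(B'_s(y)) \le C\, s^{N-1}$ for all $y \in B'_{1/2}$ and $s \le 1/4$; this collapses the remaining integral to $C\cdot(1/4)$. Such a density bound does not follow from subharmonicity alone, as the example $f(x)=|x|$ at $y = 0$ shows, so one must exploit the specific origin of $f$ as a nonhomogeneous blow-up limit. The natural approach is to reuse the excess-area comparison underlying the proof of subharmonicity in Lemma 9.1, but now at every scale $s$: compare $f$ with its harmonic replacement on $B'_s(y)$ and translate the $c\sigma_k^2$ profit in excess area into the required density bound after letting $k\to\infty$.

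The main obstacle is this density estimate. One has to ensure that the errors coming from the non-constancy of the variable exponent $p_k$ and the coefficient $\lambda_k^*$ (encoded in $|\nabla p_k| \le L\rho_k$ and $\rho_k^{\alpha^*} \le \rho_0 \tau_k$, as in Lemma 9.1) are of strictly lower order than the area profit $\sigma_k^2$, uniformly across all test balls $B'_s(y) \subset B'_{1/2}$. This is exactly where the normalization $\tau_k/\sigma_k^2 \to 0$ buys us room to absorb the perturbative errors and retain a constant that depends only on $N$. Once this uniform density bound for $\mu$ is established, the elementary Fubini manipulations above finish the proof.
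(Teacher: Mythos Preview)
Your approach is genuinely different from the paper's, and I believe it has a real gap at the density estimate.

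The paper does \emph{not} work intrinsically with the $(N-1)$-dimensional measure $\Delta f$. Instead it follows Lemma~8.3 of \cite{DP} (and ultimately Lemma~7.4 of \cite{AC}): one introduces the auxiliary functions
\[
w_k(y,h)=\frac{u_k(y,h)+h}{\sigma_k}
\]
in the half-space $\{h<0\}$. By the gradient nondegeneracy (Theorem~\ref{nodegegrad}) one has $|\nabla u_k|$ bounded above and below in $\{x_N\le -C_\delta\sigma\}$, so by Remark~\ref{T} each $w_k$ solves a uniformly elliptic equation in nondivergence form, now with H\"older principal coefficients and bounded first-order terms. Since $|\nabla p_k|\to 0$, the first-order coefficients vanish in the limit, and the $W^{2,q}$ estimates of \cite{GT}, Chapter~9, let one pass to a limit $w$ solving a constant-coefficient equation with only principal part. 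One then identifies $f(y)=w(y,0)$ and bounds the spherical-mean integral directly from properties of $w$. The whole point is to exploit the $N$-dimensional PDE structure rather than the $(N-1)$-dimensional potential theory of $f$.

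Your proposed route would require $\mu(B'_s(y))\le Cs^{N-1}$, which, as you note, is equivalent in strength to a Lipschitz bound on $f$---precisely what Lemma~\ref{flat}(1) later \emph{deduces} from the present lemma. You propose to extract this from the excess-area comparison of Lemma~\ref{9.1}, but that comparison is inherently one-sided: assuming a harmonic $g$ with $g\ge f$ on $\partial B'_\rho$ and $f(y_0)>g(y_0)$, the free boundary pokes \emph{above} the graph of $\sigma_k g$ and creates excess area $\ge c\sigma_k^2$, which is then shown to be $\le C(\tau_k+\rho_k^{\alpha^*})$, a contradiction. If instead you take $g$ to be the harmonic replacement of $f$ on $B'_s(y)$, then $f\le g$ everywhere by subharmonicity, the free boundary sits \emph{below} the graph of $\sigma_k g$, and the comparison yields no lower bound on excess area at all---hence no upper bound on $\mu(B'_s(y))$. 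I do not see how to turn the Lemma~\ref{9.1} machinery into the density estimate you need; the $N$-dimensional $w_k$-argument seems essential here.
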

\begin{proof}
It follows as  Lemma 8.3 in \cite{DP},  by Remark \ref{T} and
Theorem \ref{nodegegrad}.

The only difference is that the functions $w_k = (u_k(y,h)+h)/\sigma_k$ verify a second order elliptic equation in non--divergence form with H\"older principal coefficients and bounded first order coefficients.

As in the proof of Lemma \ref{9.1}, since $|\nabla p_k|\to 0$, these first order coefficients converge to 0 and so, by the $W^{2,q}$ regularity estimates of \cite{GT}, Chapter 9, we can pass to the limit to discover that $w_k\to w$ and $w$ satisfies a second order elliptic equation in non--divergence form with constant coefficients with only principal part.

After that, the proof follows without any change as that of Lemma 8.3 in \cite{DP}.
\end{proof}

With these two lemmas we have by Lemma 7.7 and Lemma 7.8 in
\cite{AC},

\begin{lema}\label{flat}\begin{enumerate}

\item $f$ is Lipschitz in $\bar{B}'_{1/4}$ with Lipschitz constant
depending on $C_1$ and $N$. \item There exists a constant
$C=C(N)>0$ and for $0<\theta<1$, there exists
$c_{\theta}=c(\theta,N)>0$, such that we can find a ball $B_r'$
and a vector $l\in \mathbb{R}^{N-1}$
 with
$$c_{\theta} \leq r\leq \theta,\ \ |l|\leq C, \ \ \mbox{ and }
f(y)\leq l .y+\frac{\theta}{2} r \ \ \mbox{ for } |y|\leq r.$$

 \end{enumerate}
\end{lema}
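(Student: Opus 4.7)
The plan is to follow the classical Alt--Caffarelli scheme for subharmonic limit functions satisfying a Dini-type integral bound. The two hypotheses we have on $f$ after the previous lemmas are exactly the ones required: $f$ is subharmonic on $B'_1$ with $f(0)=0$ and $|f|\le 1$, and for every $y\in B'_{1/2}$ the Dini integral
\[
\int_0^{1/4}\frac{1}{r^2}\Big(\pint_{\partial B'_r(y)}f - f(y)\Big)\,dr\le C_1
\]
is finite. Note that by subharmonicity the integrand is nonnegative and the spherical means $M(r,y)=\pint_{\partial B'_r(y)}f$ are nondecreasing in $r$, so the bound above is a genuine constraint on how fast the means grow off $f(y)$.

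For part (1), I would fix $y\in \overline{B'_{1/4}}$ and $r\in (0,1/4)$, and decompose $f=h_r+w_r$ on $B'_r(y)$ where $h_r$ is harmonic with $h_r=f$ on $\partial B'_r(y)$ and $w_r=f-h_r\le 0$ is subharmonic with zero boundary data. Interior gradient estimates for harmonic functions give
\[
|\nabla h_r(y)|\le \frac{C(N)}{r}\bigl(M(r,y)-f(y)\bigr)+C(N)|f(y)|,
\]
and the Dini bound together with the monotonicity of $M(\cdot,y)$ forces $r^{-1}(M(r,y)-f(y))$ to be integrable against $dr/r$; in particular this quantity is bounded along a suitable sequence $r_k\to 0$. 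Combining this with a standard comparison/telescoping argument on dyadic annuli (exactly as in Lemma 7.7 of \cite{AC}), one concludes that $f$ has a uniformly bounded difference quotient at $y$, with constant depending only on $C_1$ and $N$; since $y$ was arbitrary in $\overline{B'_{1/4}}$, this gives the Lipschitz conclusion.

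For part (2), I would argue as in Lemma 7.8 of \cite{AC}. Since $f$ is now Lipschitz with $f(0)=0$ and subharmonic, let $h$ be the harmonic replacement of $f$ in $B'_{1/2}$ and set $l=\nabla h(0)$; interior estimates give $|l|\le C(N)\|f\|_\infty\le C(N)$. Because $f\le h$ by the maximum principle for subharmonic functions with the same boundary data, and because $h$ is harmonic with $h(0)=f(0)+(h-f)(0)$ where $h-f\ge 0$ is controlled by the Dini bound (so $h(0)\le C$), Taylor expansion of $h$ at $0$ gives $h(y)\le l\cdot y+C(N)|y|^2$ on $B'_{1/4}$. Then for any $\theta\in(0,1)$ one chooses $r=r(\theta,N)$ small enough that $C(N)r^2\le \tfrac\theta2 r$, and balances against the lower bound $r\ge c_\theta$ coming from the fact that $l$ and $h$ depend only on $\|f\|_\infty$ and $N$; this yields the flatness inequality $f(y)\le l\cdot y+\tfrac\theta2 r$ on $B'_r$.

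The main technical obstacle is the Lipschitz step (1): one must extract genuine Lipschitz (and not merely H\"older or continuous) control from the integral bound, which requires using subharmonicity in an essential way to control the subharmonic correction $w_r$, since $w_r\le 0$ and vanishes on $\partial B'_r(y)$ does not by itself rule out large negative gradients. The trick, as in \cite{AC}, is that the Dini bound together with the monotonicity of the means $M(r,y)$ controls $w_r$ pointwise near $y$ by Poisson-kernel comparison, which then translates into a gradient bound at $y$ for $h_r$ uniform in $r$. Once (1) is in hand, (2) is then a fairly direct harmonic-approximation argument.
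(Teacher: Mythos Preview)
Your proposal is correct and takes essentially the same approach as the paper: the paper's own proof consists solely of the sentence ``With these two lemmas we have by Lemma 7.7 and Lemma 7.8 in \cite{AC}'', and you are sketching exactly those two lemmas. Your outline of the Alt--Caffarelli arguments (harmonic replacement plus the Dini bound for the Lipschitz estimate, then harmonic approximation and Taylor expansion for the flatness conclusion) is faithful to the original, so there is nothing to add beyond what \cite{AC} already contains.
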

And, as in Lemma 7.9 in \cite{AC} we have,
\begin{lema}\label{flat31}
Let $\theta$, $C$, $c_{\theta}$ as in Lemma \ref{flat}.  There
exists a positive constants $\sigma_{\theta}$,  such that
\begin{equation}\label{flat51}u\in F(\sigma,\sigma;\tau) \mbox{ in } B_{\rho} \mbox{ in
direction } \nu\end{equation} with $\sigma\leq \sigma_{\theta}, \
\tau\leq \sigma_{\theta} \sigma^2$ and $\rho^{\alpha^*}\leq \rho_0 \tau$, implies
$$u\in F(\theta\sigma,1; \tau) \mbox{ in } B_{\bar{\rho}} \mbox{ in direction }
\bar{\nu}$$ for some $\bar{\rho}$ and $\bar{\nu}$ with
$c_{\theta}\rho\leq \bar{\rho}\leq \theta \rho$ and
$|\bar{\nu}-\nu|\leq C\sigma$, where
$\sigma_{\theta}=\sigma_{\theta}(\theta,N)$.
\end{lema}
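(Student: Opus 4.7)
The plan is to argue by contradiction along the nonhomogeneous blow-up scheme of Alt--Caffarelli, adapted to the variable-exponent setting through Lemma \ref{9.1} and the subsequent integral bound. If the statement fails for some $\theta$, one extracts a sequence of minimizers $u_k \in F(\sigma_k,\sigma_k;\tau_k)$ in $B_{\rho_k}$ in direction $\nu_k$, with data $p_k$ and $\lambda_k$ satisfying the hypotheses of Lemma \ref{9.1}, such that $\sigma_k \to 0$, $\tau_k/\sigma_k^2 \to 0$, $\rho_k^{\alpha^*} \leq \rho_0 \tau_k$, and for which no pair $(\bar\rho,\bar\nu)$ with $c_\theta \rho_k \leq \bar\rho \leq \theta \rho_k$ and $|\bar\nu - \nu_k| \leq C\sigma_k$ puts $u_k$ into $F(\theta\sigma_k,1;\tau_k)$. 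After a rotation I may take $\nu_k = e_N$ for every $k$.

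Lemma \ref{9.1} then provides a subsequence along which the upper and lower free-boundary profiles $f_k^{\pm}$ converge uniformly on compact subsets of $B'_1$ to a common continuous subharmonic function $f$ with $f(0)=0$ and $|f|\le 1$. The integral control supplied by the lemma immediately following Lemma \ref{9.1} applies to $f$, and Lemma \ref{flat} then yields a radius $r \in [c_\theta,\theta]$ and a vector $l \in \R^{N-1}$ with $|l| \leq C$ such that $f(y) \leq l\cdot y + \tfrac{\theta}{2} r$ for all $|y| \leq r$.

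With this at hand, I would define $\bar\rho_k := r\rho_k \in [c_\theta \rho_k, \theta \rho_k]$ and take $\bar\nu_k$ to be the unit vector obtained from $e_N$ by a tilt of size $O(\sigma_k|l|)$, so that in the rescaled $(y,h)$-coordinates the hyperplane $\{\langle x, \bar\nu_k\rangle = 0\}$ corresponds to $\{h = l\cdot y\}$; this forces $|\bar\nu_k - e_N| \leq C\sigma_k$. The uniform convergence $f_k^+ \leq f + o(1)$ on $\overline{B'_r}$ combined with the inequality for $f$ yields, after unwinding the change of variables $(y,h)\mapsto (\rho_k y, \sigma_k \rho_k h)$ and for $k$ large, that $u_k$ vanishes on $\{\langle x,\bar\nu_k\rangle \geq \theta\sigma_k\bar\rho_k\}\cap B_{\bar\rho_k}$. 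The gradient bound $|\nabla u_k| \leq \lambda^*_k(0)(1+\tau_k)$ is rotation-invariant, and the lower flatness constraint with constant $1$ is preserved because rotating $e_N$ by an angle $O(\sigma_k)$ perturbs the lower half-space $\{x_N \leq -\rho_k\}$ by at most $O(\sigma_k \bar\rho_k) \ll \bar\rho_k$. Hence $u_k \in F(\theta\sigma_k, 1; \tau_k)$ in $B_{\bar\rho_k}$ in direction $\bar\nu_k$, contradicting the choice of the sequence.

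I expect the main bookkeeping obstacle to be the role of the coefficient $\lambda^*_k(0)$, which in our variable-exponent setting is not normalised to $1$ as in \cite{AC,DP}, together with the H\"older oscillation $|\lambda^*_k(x)-\lambda^*_k(0)| \leq C^*\rho_k^{\alpha^*}$. This oscillation must be absorbed into $\tau_k$ every time the flux identity on the horizontal slice $Z_0$ is used, which is precisely the reason for the hypothesis $\rho_k^{\alpha^*} \leq \rho_0 \tau_k$ --- the same condition that drove the subharmonicity of $f$ in Lemma \ref{9.1}. A secondary concern is that the passage from pointwise flatness of $f$ to flatness of $u_k$ uses Theorem \ref{nodegegrad} to ensure the lower bound $|\nabla u_k| \geq \lambda^*_k(0)(1-\delta)$ in a strip below the free boundary, so that the rescaled functions $w_k = (u_k(y,h)+h)/\sigma_k$ satisfy a uniformly elliptic linear equation whose first-order terms vanish in the limit, allowing the blow-up analysis of the previous two lemmas to go through unchanged.
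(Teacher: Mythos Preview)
Your proposal is correct and follows exactly the approach the paper invokes: the paper's proof consists solely of the reference ``as in Lemma 7.9 in \cite{AC}'', and what you have written is precisely the Alt--Caffarelli contradiction argument via nonhomogeneous blow-up, with Lemma~\ref{9.1}, the integral bound, and Lemma~\ref{flat} playing the roles of their counterparts in \cite{AC}. Your last two paragraphs are superfluous here: the absorption of the oscillation of $\lambda^*_k$ and the role of Theorem~\ref{nodegegrad} are concerns internal to Lemma~\ref{9.1} and the subsequent integral-bound lemma, both of which you are entitled to invoke as black boxes at this stage.
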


\begin{lema}\label{flat3}
Given $0<\theta<1$, there exist positive constants
$\sigma_{\theta}$,  $c_{\theta}$ and $C$ such that
\begin{equation}\label{flat5}u\in F(\sigma,1;\tau) \mbox{ in } B_{\rho} \mbox{ in
direction } \nu\end{equation} with $\sigma\leq \sigma_{\theta},
\tau\leq \sigma_{\theta} \sigma^2$ and $\rho^{\alpha^*}\leq \rho_0 \tau$, then
$$u\in F(\theta\sigma,\theta\sigma;\theta^2 \tau) \mbox{ in } B_{\bar{\rho}} \mbox{ in direction }
\bar{\nu}$$ for some $\bar{\rho}$ and $\bar{\nu}$ with
$c_{\theta}\rho\leq \bar{\rho}\leq \frac{1}{4} \rho$ and
$|\bar{\nu}-\nu|\leq C\sigma$, where
$c_{\theta}=c_{\theta}(\theta,N)$,
$\sigma_{\theta}=\sigma_{\theta}(\theta,N)$.
\end{lema}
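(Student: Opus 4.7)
The plan is to iterate Theorem \ref{flat2} and Lemma \ref{flat31}, keeping careful track of the gradient parameter $\tau$ so as to produce the quadratic factor $\theta^2\tau$ that is compatible with further iterations (the hypothesis $\tau\le\sigma_\theta\sigma^2$ is precisely what is preserved by this scaling).

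Since $\tau\le\sigma_\theta\sigma^2\le\sigma$ for $\sigma$ small, Theorem \ref{flat2} applies and converts the one-sided flatness $u\in F(\sigma,1;\tau)$ in $B_\rho$ into two-sided flatness $u\in F(2\sigma,C_0\sigma;\sigma)$ in $B_{\rho/2}$; since the original pointwise bound $|\nabla u|\le\lambda^*(0)(1+\tau)$ still holds in the subball, this upgrades to $u\in F(2\sigma,C_0\sigma;\tau)$. Relaxing to the two-sided condition $u\in F(C\sigma,C\sigma;\tau)$ with $C=\max(2,C_0)$, I would then apply Lemma \ref{flat31} with parameter $\theta_1=\theta/C$ to obtain $u\in F(\theta\sigma,1;\tau)$ in $B_{\rho_1}$ in a new direction $\nu_1$, with $c_{\theta_1}\rho\le\rho_1\le\theta_1\rho$ and $|\nu_1-\nu|\le C'\sigma$. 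A further application of Theorem \ref{flat2} in direction $\nu_1$ then produces two-sided flatness $u\in F(\theta\sigma,\theta\sigma;\tau)$ in a ball $B_{\bar\rho}$ after absorbing constants into $\theta$.

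The remaining step is to sharpen the gradient parameter from $\tau$ to $\theta^2\tau$. I would revisit the barrier argument underlying Theorem \ref{flat2} at the new scale: now that the free boundary is $\theta\sigma$-flat on both sides in direction $\bar\nu$, the same construction provides a sharper pointwise bound for $|\nabla u|$, and iterating this, or equivalently invoking the $C^{1,\alpha}$ regularity of the constant-exponent blow-up limit via Lemma \ref{propblowup}(6) together with the compactness argument of Lemma \ref{9.1}, should yield the desired quadratic improvement. The standing hypothesis $\rho^{\alpha^*}\le\rho_0\tau$ ensures that $\mbox{osc}_{B_\rho}\lambda^*$ is controlled by $\tau\lambda^*(0)$, so the constant-coefficient limit analysis legitimately produces the $\theta^2$ factor.

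The main obstacle is precisely this gradient upgrade. In the $p$-constant setting of \cite{DP} it follows immediately from the homogeneity of $p$-harmonic functions under rescaling, but here the variable exponent produces a first-order correction in the non-divergence form of the operator (cf.\ Remark \ref{T}); the Lipschitz assumption $|\nabla p|\le L$ is crucial to estimate this correction as $O(L\bar\rho)\to0$ under rescaling, so that the $W^{2,q}$ estimates of \cite{GT} used in Lemma \ref{9.1} transfer the classical second-order regularity of the limit problem to our non-constant-exponent setting.
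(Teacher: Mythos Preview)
Your overall scheme—apply Theorem \ref{flat2}, then Lemma \ref{flat31}, then improve $\tau$, then iterate—matches the paper's structure. The first two steps are fine. The genuine gap is in the gradient improvement from $\tau$ to $\theta^2\tau$, which is the main technical content of this lemma.

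You propose either to ``revisit the barrier argument underlying Theorem \ref{flat2}'' or to invoke the compactness of Lemma \ref{9.1} and $C^{1,\alpha}$ regularity of the blow-up limit. Neither of these is what the paper does, and neither obviously yields the required quantitative decay. The barriers in Theorem \ref{flat2} control the \emph{location} of the free boundary, not the size of $|\nabla u|$ in the interior of $\{u>0\}$. And Lemma \ref{9.1} is a statement about the nonhomogeneous blow-up of the free boundary graph; it does not produce a gradient bound, and a compactness argument of that type would require a contradiction hypothesis of the form $\tau_k/\sigma_k^2\not\to 0$, which is not available here.

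The paper's mechanism is entirely different and hinges on Lemma \ref{ecugradu}. After the first application of Theorem \ref{flat2} and Lemma \ref{flat31} one lands in a ball $B_{r_1}$ where, by Theorem \ref{nodegegrad}, $|\nabla u|$ is bounded below away from the free boundary. Lemma \ref{ecugradu} then says $v=|\nabla u|$ is a subsolution of a uniformly elliptic divergence-form equation $-\mbox{div}\,D\nabla v+B\cdot\nabla v\le\mbox{div}\,H$ with $|H|$ controlled by $|\nabla p|$. One sets $U_\varepsilon=(|\nabla u|-\lambda^*(0)-\varepsilon)^+$; by Lemma \ref{const} this vanishes near the free boundary, hence on a set of definite measure in $B_{2r_1}$ (namely on $\{u=0\}$). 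Applying the weak Harnack inequality (\cite{GT}, Theorem 8.18) to the supersolution $W=S-U_\varepsilon$, where $S=\lambda^*(0)\tau+Cr_1\|H\|_\infty$, one obtains
\[
\sup_{B_{r_1}}U_\varepsilon\le\delta\,\lambda^*(0)\tau+C_4 r_1,\qquad 0<\delta<1.
\]
The error $C_4 r_1\le C\theta_1\rho\le C\theta_1\rho_0\tau$ is absorbed by choosing $\theta_1$ small, giving $\sup|\nabla u|\le\lambda^*(0)(1+\theta_0^2\tau)$ for a fixed $\theta_0<1$. One checks the hypotheses $r_1^{\alpha^*}\le\rho_0(\theta_0^2\tau)$ etc.\ are preserved and iterates finitely many times until $\theta_0^m\le\theta$. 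This subsolution-plus-weak-Harnack argument for $|\nabla u|$ is the key idea you are missing.
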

\begin{proof}
We obtain the improvement of the value $\tau$ inductively. If
$\sigma_{\theta}$ is small enough, we can apply Theorem \ref{flat2}
and obtain
$$u\in F(C\sigma,C\sigma;\tau) \mbox{ in } B_{\rho/2} \mbox{ in direction } \nu.$$ Then
for $0<\theta_1\leq \frac{1}{2}$ we can apply Lemma \ref{flat31},
if again $\sigma_{\theta}$ is small, and we obtain
\begin{equation}\label{theta1}u\in F(C\theta_1\sigma,C\sigma;\tau) \mbox{ in }
B_{r_1} \mbox{ in direction } \nu_1\end{equation} for some
$r_1, \nu_1$ with
$$c_{\theta_1}\rho\leq 2r_1\leq \theta_1\rho, \mbox{ and }
|\nu_1-\nu|\leq C\sigma.
$$

In order to improve $\tau$, we consider the functions
$U_{\ep}=\big(|\nabla u| - \lambda^*(0) - \ep\big)^+$ and
$U_0=\big(|\nabla u| - \lambda^*(0)\big)^+$ in $B_{2r_1}$. By
Lemma \ref{const},
we know that $U_{\ep}$ vanishes in a neighborhood of the free
boundary. Since $U_{\ep}>0$ implies $|\nabla u|>\lambda^*(0)+\ep$, the
closure of $\{U_{\ep}>0\}$ is contained in $\{|\nabla
u|>\lambda^*(0)+\ep/2\}$.

Since $|\nabla u|$ is bounded from above in $B_{2r_1}$, and from
below in the set $\{|\nabla u|>\lambda^*(0)+\ep/2\}$ the hypotheses
of Lemma \ref{ecugradu} are
satisfied, and we have that $v=|\nabla u|$
satisfies,
$$
- \mbox{div} D\nabla v+B\nabla v\leq\mbox{div} H\quad \mbox{ in }
\{|\nabla u|>\lambda^*(0)+\ep/2\}
$$

 Hence $U_{\ep}$ satisfies
$$
- \mbox{div} D\nabla U_{\ep}+B\nabla U_{\ep}\leq\mbox{div} H\quad
\mbox{ in } \{U_\ep>0\}
$$

Extending the operator  by a uniformly elliptic
operator with principal part in divergence form with ellipticity constant $\beta$ and $H$ by $\tilde H$ with $\|\tilde H\|_\infty\le C\|H\|_\infty$ we get,

$$\begin{cases}- \mbox{div} \widetilde{D}\nabla U_{\ep}+\widetilde{B}\nabla U_{\ep}\leq\mbox{div}
\widetilde{H}\quad &\mbox{ in } B_{2r_1}
\\
U_{\ep}\leq \lambda^*(0)\tau &\mbox{ on }\partial
B_{2r_1}\end{cases}$$

 and $r_1\leq \theta_1\rho\leq \rho/4$.
 Then, $U_{\ep}\leq
\lambda^*(0)\tau+C(N,L,\beta)r_1\|H\|_{\infty}$. Let
$S=\lambda^*(0)\tau+Cr_1\|H\|_{\infty}$.

Let $W=S-U_{\ep}\geq 0$. Then
$$- \mbox{div} \widetilde{D}\nabla W+\widetilde{B}\nabla W\geq\mbox{div}
\widetilde{H}\quad \mbox{ in } B_{2r_1}.$$

By the weak Harnack inequality
(see \cite{GT} Theorem 8.18) we have that, if $1<q<\frac{N-2}{2}$
then,
$$\frac{1}{r_1^{N/q}} \|W\|_{L^q(B_{2r_1})}\leq C_1[\inf_{B_{r_1}} W+
\|H\|_{L^{\infty}(B_{2r_1})}r_1].$$ In $\{u=0\}$,
$W=S$. Moreover $u=0$ in $B_{r_1/4}(\frac{3}{4}
r_1\bar{\nu})$ since $\theta_1\leq 1/4$. Therefore,
$$S\leq C_2[S-\sup_{B_{r_1}} U_{\ep}+
\|H\|_{L^{\infty}(B_{2r_1})}r_1].
$$
Then,
$$\sup_{B_{r_1}} U_{\ep}\leq \Big(1-\frac{1}{C_2}\Big) S+C_3r_1\leq
\delta  \lambda^*(0)\tau+ C_4r_1$$ with $0<\delta<1$. And we
have,
$$\sup_{B_{r_1}}|\nabla u|\leq \lambda^*(0) (1+\delta
\tau)+C_4r_1$$ with $C_4=C_4(N,\lambda^*(0),L,\beta)$.

Since $r_1\leq \theta_1 \rho$ and $\rho\leq \rho^{\alpha^*}\leq \rho_0 \tau$, we have

$$\sup_{B_{r_1}}|\nabla u|\leq \lambda^*(0) (1+\delta
\tau)+C_4r_1\leq \lambda^*(0) (1+\delta \tau+ C\theta_1 \rho
)\leq\lambda^*(0)(1+\tau(\delta+C\theta_1\rho_0)).$$

Let us choose $\theta_1$
such that $C\rho_0\theta_1+\delta<1$. Take
$\theta_0=\max\{\theta_1^{\alpha^*/2},(\delta+C\rho_0\theta_1)^{1/2}\}$.

 We have
$$u\in F(\theta_0\sigma,1;\theta_0^2\tau) \mbox{ in } B_{r_1} \mbox{ in
direction } \nu_1.$$

Moreover, $r_1^{\alpha^*}\leq  \theta_1^{\alpha^*} \rho^{\alpha^*}\leq  \theta_0^2 \rho^{\alpha^*}
\leq \rho_0 \theta_0^2 \tau$. We also have,
 $\theta_0\sigma \leq \theta_0\sigma_{\theta_1}\leq \sigma_{\theta_1}$ and
$\theta_0^2 \tau \leq \theta_0^2 \sigma_{\theta_1}\sigma^2=\sigma_{\theta_1}(\theta_0 \sigma)^2 $.

Then, we can repeat this argument a finite number
of times, and we obtain
$$u\in F(\theta_0^m\sigma,1;\theta_0^{2m}\tau) \mbox{ in } B_{r_1...r_m} \mbox{ in
direction } \nu_m,$$ with
$$c_{\theta_j}\leq 2 r_j\leq \theta_j, \mbox{ and }
|\nu_m-\nu|\leq \frac{C}{1-\theta}\sigma.$$ Finally we choose $m$
large enough such that $\theta_0^m\leq \theta$, we have
that
$$u\in F(\theta\sigma,1;\theta^{2}\tau) \mbox{ in } B_{r_1...r_m} \mbox{ in
direction } \nu_m,$$
and
using Theorem \ref{flat2} we have if $\sigma\leq
\sigma_{\theta_1,\rho_0}$, $\tau\leq
\sigma_{\theta_1,\rho_0}\sigma^2$ and $\rho^{\alpha^*}\leq \rho_0\tau$ the
desired result.
\end{proof}

\subsection{Smoothness of the free boundary}

\begin{teo}\label{regfin}
Suppose that $u$ is a minimizer of $\J$ in $\K$ and $D\subset\subset\Omega$.
Assume $p$ is Lipschitz and $\lambda$ is H\"older.
Then, there exist positive constants $\bar{\sigma}_0$, $C$ and
$\gamma$ such that if
$$
u\in F(\sigma,1;\infty) \quad in\ B_{\rho}(x_0)\subset D \mbox{
in direction } \nu
$$
with $\sigma\leq \bar{\sigma}_0$, $\rho\leq
\bar{\rho}_0(\bar{\sigma}_0,\sigma)$, then
$$
B_{\rho/4}(x_0)\cap \partial\{u>0\} \mbox{ is a } C^{1,\gamma}
\mbox{ surface.}
$$

More precisely, a graph in direction $\nu$ of a
$C^{1,\gamma}$ function and, for any $x_1$, $x_2$ on this surface
$$
|\nu(x_1)-\nu(x_2)|\leq C\sigma
\Big|\frac{x_1-x_2}{\rho}\Big|^{\gamma}
$$
\end{teo}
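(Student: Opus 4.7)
The plan is to prove $C^{1,\gamma}$ regularity of $B_{\rho/4}(x_0)\cap\partial\{u>0\}$ by iterating the flatness improvement Lemma~\ref{flat3} at every free boundary point in this ball, producing a Cauchy sequence of normal directions whose oscillation decays geometrically.

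First I would reduce the initial hypothesis to one that can feed the iteration. Starting from $u\in F(\sigma,1;\infty)$ in $B_\rho(x_0)$ with $\sigma\le\bar\sigma_0$, I would apply Theorem~\ref{flat2} (after first noting that $\sigma$ itself is a valid choice of $\tau$ bound, since Lemma~\ref{const} together with the local Lipschitz estimate and a rescaling to a slightly smaller ball gives the required gradient control $|\nabla u|\le\lambda^*(0)(1+\sigma)$) and then Theorem~\ref{nodegegrad}, to obtain $u\in F(\sigma_0,\sigma_0;\tau_0)$ in some $B_{\rho_0}(x_0)$ with $\tau_0\le\sigma_{\theta}\sigma_0^2$ and $\rho_0^{\alpha^*}\le\bar\rho_0\,\tau_0$, where $\theta\in(0,1)$ will be fixed below. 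This is the precise input required by Lemma~\ref{flat3}, and the smallness $\rho\le\bar\rho_0(\bar\sigma_0,\sigma)$ in the hypothesis of the theorem is exactly what guarantees $\rho_0^{\alpha^*}\le\bar\rho_0\,\tau_0$ after this initial reduction.

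Next, I would iterate Lemma~\ref{flat3}. Having chosen $\theta$, the lemma yields a sequence $(\sigma_k,\tau_k,\rho_k,\nu_k)$ with
\begin{equation*}
\sigma_k=\theta^k\sigma_0,\qquad \tau_k=\theta^{2k}\tau_0,\qquad c_\theta\rho_{k-1}\le\rho_k\le \tfrac14\rho_{k-1},\qquad |\nu_{k+1}-\nu_k|\le C\sigma_k,
\end{equation*}
provided that at every step the hypotheses $\sigma_k\le\sigma_\theta$, $\tau_k\le\sigma_\theta\sigma_k^2$ and $\rho_k^{\alpha^*}\le\bar\rho_0\tau_k$ continue to hold. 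The second is preserved automatically ($\tau_{k+1}=\theta^2\tau_k\le\sigma_\theta\theta^2\sigma_k^2=\sigma_\theta\sigma_{k+1}^2$). The critical one is the third: from $\rho_{k+1}\le\tfrac14\rho_k$ and $\tau_{k+1}=\theta^2\tau_k$ we get $\rho_{k+1}^{\alpha^*}\le 4^{-\alpha^*}\rho_k^{\alpha^*}\le 4^{-\alpha^*}\bar\rho_0\tau_k=4^{-\alpha^*}\theta^{-2}\bar\rho_0\tau_{k+1}$, so the iteration closes as soon as $\theta$ is fixed with $\theta\ge 2^{-\alpha^*}$. This is precisely the balance that forces the exponent $\gamma$ of the final regularity to depend on $\alpha^*$, and is, I expect, the main technical obstacle — keeping the Hölder scaling of $\lambda^*$ compatible with the dyadic decay of the flatness.

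Once the iteration is in place, the estimate $|\nu_{k+1}-\nu_k|\le C\theta^k\sigma_0$ is summable, so $\nu_k$ converges to a unit vector $\nu(x_0)$ with $|\nu_k-\nu(x_0)|\le C\theta^k\sigma_0$, and from condition (1) of the flatness class one gets that $\partial\{u>0\}\cap B_{\rho_k}(x_0)$ lies in a slab of width $2\sigma_k\rho_k$ normal to $\nu(x_0)$. Repeating the whole argument at every $y_0\in B_{\rho/4}(x_0)\cap\partial\{u>0\}$ — the flatness is inherited by small balls centered at nearby free boundary points because the defining inequalities are open — produces a well-defined normal field $y_0\mapsto\nu(y_0)$. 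For two free boundary points $y_1,y_2$ with $|y_1-y_2|\sim\theta^k\rho_0$, comparing the iterations at $y_1$ and at $y_2$ (both living inside the slab at scale $k-1$) yields $|\nu(y_1)-\nu(y_2)|\le C\sigma_k\le C(|y_1-y_2|/\rho)^{\gamma}\sigma$, with $\gamma=-\log\theta/\log(1/c_\theta)>0$. Finally, a standard argument (as in Section~9 of \cite{AC}) then shows that $B_{\rho/4}(x_0)\cap\partial\{u>0\}$ is a graph in direction $\nu$ of a $C^{1,\gamma}$ function, with the normal obeying the announced Hölder estimate.
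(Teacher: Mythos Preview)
Your outline is the standard Alt--Caffarelli iteration, which is exactly what the paper invokes: its entire proof is a reference to Theorem~9.3 of \cite{MW1}, where this argument is carried out. Your iteration of Lemma~\ref{flat3}, the verification that the constraint $\rho_k^{\alpha^*}\le\rho_0\tau_k$ propagates provided $\theta\ge 2^{-\alpha^*}$, and the passage from the geometric decay of $\sigma_k$ to a H\"older normal field are all correct and match the cited argument.

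One imprecision worth flagging is your initial reduction. Neither Theorem~\ref{flat2} nor Theorem~\ref{nodegegrad} yields $\tau_0\le\sigma_\theta\sigma_0^2$: the former leaves $\tau$ unchanged, and the latter gives a \emph{lower} gradient bound, not an upper one. Lemma~\ref{const} alone is also insufficient, since it is a pointwise limsup at $x_0$ and does not give a uniform $r_\sigma$ over nearby free boundary points. The way this is actually done (in \cite{AC}, \cite{DP}, \cite{MW1}) is to start from the crude bound $u\in F(\sigma,1;\tau_{big})$ coming from the Lipschitz constant, and then run the $\tau$-improvement mechanism from the proof of Lemma~\ref{flat3} --- the weak Harnack inequality applied to $(|\nabla u|-\lambda^*(0))^+$ --- a finite number of times in nested balls, each step replacing $\tau$ by $\delta\tau+C\rho$ with $\delta<1$. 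The hypothesis $\rho\le\bar\rho_0(\bar\sigma_0,\sigma)$ is precisely what makes the additive $C\rho$ term small enough that after finitely many iterations $\tau$ drops below $\sigma_\theta\sigma^2$, uniformly in $x_0$. Once this preliminary phase is complete, your iteration proceeds as written.
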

\begin{proof}
See Theorem 9.3 in \cite{MW1}.
\end{proof}

\begin{remark}\label{remfin} By the nondegeneracy (Corollary \ref{prom12}) and by \eqref{asymp},
 we have that
 for $x_0\in \partial_{red}\{u>0\}$
we have that
 $u\in F(\sigma_{\rho},1;\infty)$ in $B_{\rho}(x_0)$ in direction
$\nu_u(x_0)$, with $\sigma_{\rho}\to 0$ as $\rho\to 0$. Hence, applying Theorem \ref{regfin} we
have,\end{remark}

\begin{teo}\label{teo.regularity}
Let $u$ be a local minimizer of $\J$ in $\K$ with power $p\in Lip$ and coefficient $\lambda\in C^\alpha$. Then, for any $x_0\in \partial_{red}\{u>0\}$
there exist $r>0$ and $0<\gamma<1$ such that $B_r(x_0)\cap\partial\{u>0\}$ is a
$C^{1,\gamma}$ surface. Thus, for every $D\subset\subset\Omega$ there exists $0<\gamma<1$ such that $D\cap\partial_{red}\{u>0\}$ is a $C^{1,\gamma}$ surface and moreover, $\H(\partial\{u>0\}\setminus \partial_{red}\{u>0\}=0$.
\end{teo}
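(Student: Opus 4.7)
The plan is to combine the asymptotic development from Theorem \ref{blow3} with the flatness-implies-regularity result Theorem \ref{regfin} to conclude local $C^{1,\gamma}$ regularity at each reduced free boundary point, then upgrade to the two global conclusions using Lemma \ref{redcasitodo}.

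First I would fix $x_0\in \partial_{\rm red}\{u>0\}$. By Theorem \ref{blow3}, we have the asymptotic development
\begin{equation*}
u(x)=\lambda^*(x_0)\langle x-x_0,\nu_u(x_0)\rangle^- + o(|x-x_0|)\quad\text{as } x\to x_0.
\end{equation*}
This gives, for every $\eta>0$, a radius $\rho_\eta>0$ such that $u(x)=0$ whenever $\langle x-x_0,\nu_u(x_0)\rangle \ge \eta\rho$ and $u(x)\ge -\lambda^*(x_0)(\langle x-x_0,\nu_u(x_0)\rangle+\eta\rho)$ whenever $\langle x-x_0,\nu_u(x_0)\rangle \le -\eta\rho$, for all $\rho\le \rho_\eta$. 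Since $u$ is a locally Lipschitz minimizer, $|\nabla u|$ is bounded on a neighborhood of $x_0$. Hence, in the language of the flatness class,
\begin{equation*}
u\in F(\eta,1;\infty)\quad\text{in } B_{\rho}(x_0)\ \text{in direction } \nu_u(x_0)
\end{equation*}
for every $\rho\le \rho_\eta$, with $\eta\to 0$ as we restrict to smaller scales. This is precisely the content of Remark \ref{remfin}.

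Next I would invoke Theorem \ref{regfin}: choosing $\eta=\bar\sigma_0$ small enough and $\rho\le \min(\rho_{\bar\sigma_0},\bar\rho_0(\bar\sigma_0,\bar\sigma_0))$, the theorem yields that $B_{\rho/4}(x_0)\cap\partial\{u>0\}$ is a $C^{1,\gamma}$ surface for some $\gamma\in(0,1)$. This gives the local statement for every $x_0\in\partial_{\rm red}\{u>0\}$. For the statement on a fixed compactly contained subdomain $D\subset\subset\Omega$, the Lipschitz norm of $u$, the moduli of continuity of $p(x)$ and $\lambda(x)$, and hence the constants $\bar\sigma_0$ and $\bar\rho_0$ in Theorem \ref{regfin}, can be chosen uniformly on $D$. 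Also, the $o(|x-x_0|)$ term in the asymptotic development is uniform on $\partial_{\rm red}\{u>0\}\cap D$ (this uniformity can be extracted from the compactness of the blow-up arguments via Lemma \ref{propblowup} and Lemma \ref{minblow}, which is where I expect the only mild technical care to be needed). This yields a single exponent $\gamma\in(0,1)$ for which $D\cap\partial_{\rm red}\{u>0\}$ is $C^{1,\gamma}$.

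Finally, the statement $\H(\partial\{u>0\}\setminus \partial_{\rm red}\{u>0\})=0$ is exactly Lemma \ref{redcasitodo}, which was established as a consequence of the uniform density estimates of Theorem \ref{densprop} combined with Federer's structure theorem 4.5.6 in \cite{F}. Putting these three ingredients together gives Theorem \ref{teo.regularity}. The only nontrivial step is verifying the uniformity of the flatness parameter $\sigma_\rho$ over $D\cap\partial_{\rm red}\{u>0\}$; everything else is a direct citation of earlier results.
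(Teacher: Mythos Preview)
Your proposal is correct and follows essentially the same approach as the paper: the paper's argument is precisely Remark~\ref{remfin} (nondegeneracy plus the asymptotic development \eqref{asymp} put $u$ in the flatness class $F(\sigma_\rho,1;\infty)$ with $\sigma_\rho\to0$) followed by an application of Theorem~\ref{regfin}, together with Lemma~\ref{redcasitodo} for the $\H$-null remainder. One small point: the vanishing of $u$ on the side $\langle x-x_0,\nu_u(x_0)\rangle\ge\eta\rho$ does not come from the asymptotic development alone but requires the nondegeneracy, exactly as Remark~\ref{remfin} indicates; since you invoke that remark, the argument is complete.
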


\appendix

\section{The spaces $L^{p(\cdot)}(\Omega)$ and $W^{1,p(\cdot)}(\Omega)$} \label{appA1}

\setcounter{equation}{0}

Let $p :\Omega \to  [1,\infty)$ be a measurable bounded function, called a variable exponent on $\Omega$ and
denote $p_{max} = ess sup \,p(x)$ and $p_{min} = ess inf \,p(x)$. We define the variable exponent Lebesgue
space $L^{p(\cdot)}(\Omega)$ to consist of all measurable functions $u :\Omega \to \R$
for which the modular
$\varrho_{p(\cdot)}(u) = \int_{\Omega} |u(x)|^{p(x)}\, dx$ is finite. We define the Luxemburg norm on this space by
$$
\|u\|_{L^{p(\cdot)}(\Omega)} = \|u\|_{p(\cdot)}  = \inf\{\lambda > 0: \varrho_{p(\cdot)}(u/\lambda)\leq 1 \}.
$$

This norm makes $L^{p(\cdot)}(\Omega)$ a Banach space.

One central property of these spaces (since $p$ is bounded) is that $\varrho_{p(\cdot)}(u_i)\to 0$
 if and only $\|u_i\|_{p(\cdot)}\to 0$, so that the norm and modular topologies coincide.

\begin{remark}\label{equi}
Observe that we have the following estimate,
$$\|u\|_{L^{p(\cdot)}(\Omega)} \leq  \max\Big\{\Big(\int_{\Omega} |u|^{p(x)})\, dx\Big)
^{1/{p_{min}}}, \Big(\int_{\Omega} |u|^{p(x)}\, dx\Big)
^{1/{p_{max}}}\Big\}$$

In fact. If $\di\int_{\Omega} |u|^{p(x)}\, dx= 0$ then $u=0$ a.e and
the result follows. If $\int_{\Omega} |u|^{p(x)}\, dx\neq 0$, take
$k=\max\Big\{\Big(\int_{\Omega} |u|^{p(x)})\, dx\Big)
^{1/{p_{min}}}, \Big(\int_{\Omega} |u|^{p(x)}\, dx\Big)
^{1/{p_{max}}}\Big\}$. Then we have,
$$\int_{\Omega} \Big(\frac{|u|}{k}\Big)^{p(x)}\, dx\leq  \max\Big\{\frac{1}{k^{p_{min}}},
\frac{1}{k^{p_{max}}} \Big\}\int_{\Omega} |u|^{p(x)}\, dx\leq 1$$
therefore $\|u\|_{L^{p(\cdot)}(\Omega)} \leq k$ and the result
follows.
\end{remark}

Let $W^{1,p(\cdot)}(\Omega)$ denote the space of measurable functions $u$ such that $u$ and the distributional derivative $\nabla u$ are in $L^{p(\cdot)}(\Omega)$. The norm

$$
\|u\|_{1,p(\cdot)}:= \|u\|_{p(\cdot)} + \| |\nabla u| \|_{p(\cdot)}
$$
makes $W^{1,p(\cdot)}$ a Banach space.

\begin{teo}\label{ref}
Let $p'(x)$ such that, $$\frac{1}{p(x)}+\frac{1}{p'(x)}=1.$$ Then
$L^{p'(\cdot)}(\Omega)$ is the dual of $L^{p(\cdot)}(\Omega)$.
Moreover, if $p_{min}>1$, $L^{p(\cdot)}(\Omega)$ and $W^{1,p(\cdot)}(\Omega)$ are
reflexive.
\end{teo}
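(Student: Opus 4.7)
The plan is to follow the classical route used for constant-exponent Lebesgue spaces, adapted with a Hölder inequality for variable exponents. First I would establish the variable-exponent Hölder inequality: for $u\in L^{p(\cdot)}(\Omega)$ and $g\in L^{p'(\cdot)}(\Omega)$,
\[
\int_\Omega |u\,g|\,dx \;\le\; \Big(\tfrac{1}{p_{\min}}+\tfrac{1}{(p')_{\min}}\Big)\,\|u\|_{p(\cdot)}\,\|g\|_{p'(\cdot)}.
\]
This follows from the pointwise Young inequality $|ab|\le \tfrac{|a|^{p(x)}}{p(x)}+\tfrac{|b|^{p'(x)}}{p'(x)}$ applied to $a=u/\|u\|_{p(\cdot)}$ and $b=g/\|g\|_{p'(\cdot)}$ and then integrating, using that the modular of a function normalized by its Luxemburg norm is bounded by $1$.

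With Hölder in hand, the map $T:L^{p'(\cdot)}(\Omega)\to (L^{p(\cdot)}(\Omega))^*$ given by $T(g)(u)=\int_\Omega u\,g\,dx$ is well-defined, linear and bounded. To show $T$ is an isometric isomorphism I would argue in two steps. For the lower bound $\|T(g)\|\ge C\,\|g\|_{p'(\cdot)}$, I would test against $u(x)=|g(x)|^{p'(x)-2}g(x)/\|g\|_{p'(\cdot)}^{p'(x)-1}$ (a standard dual test function), using the equivalence between modular and norm convergence that is valid because $p$ and $p'$ are bounded. For surjectivity, given $\Phi\in (L^{p(\cdot)}(\Omega))^*$ I would define a signed measure $\nu(E)=\Phi(\chi_E)$ on the measurable subsets of $\Omega$, check that $\nu\ll \mathcal{L}^N$ (since $\|\chi_E\|_{p(\cdot)}\to 0$ as $|E|\to 0$ because $p$ is bounded), and obtain a Radon–Nikodym density $g$. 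Truncation arguments together with the Hölder inequality then show $g\in L^{p'(\cdot)}(\Omega)$ and $\Phi=T(g)$.

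Once duality is established, reflexivity of $L^{p(\cdot)}(\Omega)$ when $p_{\min}>1$ is immediate: both $p$ and $p'$ are bounded variable exponents with essential infimum strictly greater than $1$, so applying the duality result to $p$ and to $p'$ identifies the bidual $(L^{p(\cdot)})^{**}$ with $(L^{p'(\cdot)})^*\cong L^{p(\cdot)}(\Omega)$, and one checks that this identification is the canonical evaluation map. Alternatively I would invoke uniform convexity of $L^{p(\cdot)}(\Omega)$ when $p_{\min}>1$, which also implies reflexivity via Milman–Pettis. For the Sobolev space $W^{1,p(\cdot)}(\Omega)$, the map $u\mapsto (u,\nabla u)$ is an isometric embedding into the product space $L^{p(\cdot)}(\Omega)\times L^{p(\cdot)}(\Omega)^N$, which is reflexive as a finite product of reflexive spaces; its image is closed because if $(u_k,\nabla u_k)\to (u,v)$ in the product, the distributional derivative is continuous and so $v=\nabla u$. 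A closed subspace of a reflexive Banach space is reflexive, which gives the conclusion for $W^{1,p(\cdot)}(\Omega)$.

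The main technical obstacle I expect is the surjectivity step in the duality argument, specifically controlling the Luxemburg norm of the Radon–Nikodym density $g$ by the operator norm of $\Phi$. The delicate point is that the usual trick of testing against $|g|^{p'-2}g$ with a fixed exponent is unavailable; instead one truncates $g$ to $g_n=g\chi_{\{|g|\le n\}}$, tests $\Phi$ against $u_n=|g_n|^{p'(x)-2}g_n/\|g_n\|_{p'(\cdot)}^{p'(x)-1}$, and passes to the limit using monotone convergence on the modulars together with the norm–modular equivalence. The boundedness hypothesis on $p$ is essential at this step and is the reason the theorem is stated for bounded variable exponents.
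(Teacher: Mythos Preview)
Your proposal is essentially correct and follows the standard route (H\"older inequality, Radon--Nikodym representation, closed-subspace argument for the Sobolev space). The paper itself does not give a proof of this theorem at all: it is stated in the appendix as background and the reader is referred to Kov\'a\v{c}ik--R\'akosn\'{\i}k \cite{KR} for the proof. What you have sketched is, in outline, the argument one finds there.

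One small correction: you should not aim to show that $T$ is an \emph{isometric} isomorphism. With the Luxemburg norm on $L^{p(\cdot)}$, the Riesz map $g\mapsto T(g)$ is only a topological isomorphism: both the H\"older constant $\tfrac{1}{p_{\min}}+\tfrac{1}{(p')_{\min}}$ and the lower bound coming from the dual test function $u=|g|^{p'(x)-2}g/\|g\|_{p'(\cdot)}^{\,p'(x)-1}$ depend on $p_{\min},p_{\max}$ and are in general not equal to $1$. This does not affect the reflexivity conclusion, since a bounded isomorphism with bounded inverse is enough to identify the bidual, but your write-up should say ``isomorphism with norm-equivalent inverse'' rather than ``isometric''. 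Everything else in your sketch is sound.
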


\begin{teo}\label{imb}
Let $q(x)\leq p(x)$, then
 $L^{p(\cdot)}(\Omega)\hookrightarrow L^{q(\cdot)}(\Omega)$
continuously.
\end{teo}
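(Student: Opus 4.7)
The plan is to reduce the embedding to a pointwise inequality between $|u|^{q(x)}$ and $|u|^{p(x)}$, pass to an inequality between modulars, and then convert from a modular bound to a norm bound using Remark \ref{equi} together with the homogeneity of the Luxemburg norm. Since $\Omega$ is bounded (so $|\Omega|<\infty$) and $q(x)\le p(x)$ a.e., for every real $a\ge0$ one has the trivial case analysis: if $a\le1$ then $a^{q(x)}\le 1$, while if $a\ge1$ then $a^{q(x)}\le a^{p(x)}$. In either case
\[
a^{q(x)}\le 1+a^{p(x)}.
\]

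First, I would apply this pointwise inequality to $|u(x)|$ and integrate over $\Omega$, obtaining
\[
\varrho_{q(\cdot)}(u)\;=\;\int_\Omega |u|^{q(x)}\,dx\;\le\;|\Omega|+\int_\Omega |u|^{p(x)}\,dx\;=\;|\Omega|+\varrho_{p(\cdot)}(u).
\]
This is the key modular estimate. Next, to transfer this to norms, I would use the homogeneity of the Luxemburg norm: given $u\in L^{p(\cdot)}(\Omega)$ with $\lambda:=\|u\|_{L^{p(\cdot)}(\Omega)}>0$, the function $v:=u/\lambda$ satisfies $\varrho_{p(\cdot)}(v)\le 1$ by definition of the Luxemburg norm, and the displayed inequality applied to $v$ yields
\[
\varrho_{q(\cdot)}(v)\;\le\;|\Omega|+1\;=:\;C_\Omega.
\]

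Then I would invoke Remark \ref{equi} (with $p$ replaced by $q$) to bound
\[
\|v\|_{L^{q(\cdot)}(\Omega)}\;\le\;\max\bigl\{C_\Omega^{1/q_{min}},\,C_\Omega^{1/q_{max}}\bigr\}\;=:\;C,
\]
which is a finite constant depending only on $|\Omega|$, $q_{min}$ and $q_{max}$. By homogeneity of the norm, $\|u\|_{L^{q(\cdot)}(\Omega)}=\lambda\|v\|_{L^{q(\cdot)}(\Omega)}\le C\lambda=C\|u\|_{L^{p(\cdot)}(\Omega)}$, and the case $\lambda=0$ is trivial. This delivers the continuous embedding with an explicit embedding constant, proving the statement.

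The argument is essentially routine; the only mild subtlety is the standard one of translating between the modular $\varrho_{p(\cdot)}$ and the Luxemburg norm $\|\cdot\|_{p(\cdot)}$, where one cannot simply replace one by the other but must use the normalization $\varrho_{p(\cdot)}(u/\|u\|_{p(\cdot)})\le 1$ together with Remark \ref{equi} in the reverse direction. No hypothesis beyond boundedness of $\Omega$ and $q(x)\le p(x)$ is needed (the $p_{min}>1$ assumption is irrelevant here), so the proof should be short and self-contained.
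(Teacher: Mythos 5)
Your proof is correct. Note that the paper itself does not prove Theorem \ref{imb}: it is stated as a known property of variable exponent spaces and the reader is referred to \cite{KR} (where it is the standard embedding result for $q\le p$ on a set of finite measure). Your argument is essentially the classical one from that reference: the pointwise bound $a^{q(x)}\le 1+a^{p(x)}$, the resulting modular estimate $\varrho_{q(\cdot)}(u)\le |\Omega|+\varrho_{p(\cdot)}(u)$, and the passage from modular to norm via normalization and Remark \ref{equi}, so in effect you have supplied the short self-contained proof the paper omits, with an explicit embedding constant depending only on $|\Omega|$, $q_{min}$, $q_{max}$. The only step stated slightly too casually is ``$\varrho_{p(\cdot)}(u/\lambda)\le 1$ by definition of the Luxemburg norm'' with $\lambda=\|u\|_{p(\cdot)}$: the definition is an infimum, and its attainment is not literally part of the definition. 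It does hold here (take $\lambda_n\downarrow\lambda$ with $\varrho_{p(\cdot)}(u/\lambda_n)\le1$ and use monotone convergence, using that $p$ is bounded), or you can avoid the issue entirely by running the argument with any $\lambda>\|u\|_{p(\cdot)}$ and letting $\lambda\downarrow\|u\|_{p(\cdot)}$ at the end; either way the conclusion and the constant are unaffected. Your remark that only $|\Omega|<\infty$ and $q\le p$ are needed (not $p_{min}>1$) is also accurate.
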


We define the space $W_0^{1,p(\cdot)}(\Omega)$ as the closure of the $C_0^{\infty}(\Omega)$ in $W^{1,p(\cdot)}(\Omega)$. Then we have the following version of Poincare's inequity,

\begin{lema}\label{poinc} If $p(x)$ is continuous in $\overline{\Omega}$,  there exists a constant $C$ such that for every $u\in W_0^{1,p(\cdot)}(\Omega)$,
$$
\|u\|_{L^{p(\cdot)}(\Omega)}\leq C\|\nabla u\|_{L^{p(\cdot)}(\Omega)}
$$
\end{lema}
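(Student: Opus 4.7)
My plan is to prove the inequality first for $u\in C_0^\infty(\Omega)$ and then invoke density, bounding $u$ by a 1D integral of $\partial_1 u$ via the fundamental theorem of calculus and controlling that integral by the variable-exponent Hölder inequality. The continuity hypothesis on $\overline\Omega$ will be used only to guarantee that $p_{\min}>1$ and $p_{\max}<\infty$ are actually attained, so that all the ``universal'' constants that appear depend only on $d=\mathrm{diam}(\Omega)$, $N$, $p_{\min}$, and $p_{\max}$.

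First, by definition $W_0^{1,p(\cdot)}(\Omega)$ is the closure of $C_0^\infty(\Omega)$ in $W^{1,p(\cdot)}(\Omega)$, so it is enough to establish $\|u\|_{L^{p(\cdot)}(\Omega)}\le C\|\nabla u\|_{L^{p(\cdot)}(\Omega)}$ for $u\in C_0^\infty(\Omega)$ with a constant independent of $u$; the general case follows by taking limits along an approximating sequence. Fix such a $u$ and extend it by zero outside $\Omega$. Since $\Omega$ is bounded I may assume $\Omega\subset\{0\le x_1\le d\}$, so for every $x=(x_1,x')\in\Omega$ the fundamental theorem of calculus gives $|u(x)|\le\int_0^d|\partial_1 u(t,x')|\,dt$ pointwise.

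To upgrade this pointwise bound into an $L^{p(\cdot)}$ inequality, I would view the $t$-integral as a pairing of $|\partial_1 u(\cdot,x')|\chi_{[0,d]}$ against the constant $1$ and apply the variable-exponent Hölder inequality (a standard consequence of the duality in Theorem \ref{ref}) to obtain
$$
\int_0^d|\partial_1 u(t,x')|\,dt\le C_0\,\|\partial_1 u(\cdot,x')\|_{L^{p(\cdot,x')}([0,d])},
$$
where $C_0=C_0(d,p_{\min},p_{\max})$ absorbs the norm $\|\chi_{[0,d]}\|_{L^{p'(\cdot,x')}([0,d])}$, which is uniformly bounded because continuity of $p$ on $\overline\Omega$ yields $1<p_{\min}\le p_{\max}<\infty$ and hence uniform bounds on $p'$ as well. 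I would then raise the resulting inequality to the power $p(x)$, integrate over $\Omega$, interchange the order of integration by Fubini, and finally convert from the modular to the norm via Remark \ref{equi}, normalizing by $\|\nabla u\|_{L^{p(\cdot)}(\Omega)}$ to exploit homogeneity.

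The main obstacle is the exponent mismatch that is characteristic of variable-exponent analysis: after raising both sides to the power $p(x)$, the right-hand side contains $|\partial_1 u(t,x')|^{p(x_1,x')}$ while the natural modular of $\nabla u$ pairs $|\partial_1 u(t,x')|$ with the exponent $p(t,x')$. Since I have only continuity of $p$ (not log-Hölder continuity), tools from variable-exponent harmonic analysis such as boundedness of the Hardy--Littlewood maximal operator on $L^{p(\cdot)}$ are unavailable. I would therefore handle the mismatch bluntly, using the elementary inequality $s^a\le 1+s^b$ valid for $s\ge 0$ and $0\le a\le b$ with $a=p(x_1,x')$, $b=p_{\max}$, together with the finite measure of $\Omega$, to absorb the cross-terms into a constant depending only on $N$, $d$, $p_{\min}$ and $p_{\max}$; after this the estimate closes and Remark \ref{equi} delivers the stated norm inequality.
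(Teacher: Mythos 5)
Your steps up to and including the one--dimensional H\"older bound are fine: for $u\in C_0^\infty(\Omega)$ one indeed has $|u(x_1,x')|\le\int_0^d|\partial_1u(t,x')|\,dt\le C_0\,A(x')$ with $A(x')=\|\partial_1 u(\cdot,x')\|_{L^{p(\cdot,x')}(0,d)}$, and the reduction by density is harmless. The argument breaks exactly at the step you yourself flag as the main obstacle. After normalizing $\|\nabla u\|_{L^{p(\cdot)}(\Omega)}=1$ you only control the modular, i.e. $\int m(x')\,dx'\le 1$ where $m(x')=\int_0^d|\partial_1u(t,x')|^{p(t,x')}\,dt$. Your inequality $s^a\le 1+s^b$ with $b=p_{\max}$ replaces the variable exponent by the largest one, so after raising to the power $p(x)$, integrating and using Fubini you are left having to bound $\int A(x')^{p_{\max}}\,dx'$ (equivalently, terms of the type $\int|\partial_1u|^{p_{\max}}$) by a constant. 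This does not follow from the normalization: where $A(x')>1$ the norm--modular comparison (as in Remark \ref{equi}) only gives $A(x')^{p_{\max}}\le m(x')^{p_{\max}/p_{\min}}$, and $\int m\,dx'\le1$ does not control $\int m^{p_{\max}/p_{\min}}\,dx'$ (concentrate the gradient on a thin slab $\{|x'|<\delta\}$, where $m\sim\delta^{-(N-1)}$). The elementary inequality goes in the wrong direction precisely where the gradient is large, and the finite measure of $\Omega$ cannot absorb this term, so the estimate does not close.

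This is not a repairable technicality within your scheme, because you use the continuity of $p$ on $\overline\Omega$ only through $1<p_{\min}\le p_{\max}<\infty$, which the paper already assumes for measurable exponents. For $N\ge2$ the Poincar\'e inequality on $W_0^{1,p(\cdot)}(\Omega)$ actually fails for suitable discontinuous bounded exponents: if $\frac{1}{p_1}-\frac{1}{p_2}>\frac{1}{N}$, put disjoint balls $B(x_k,r_k)$ with $r_k\to0$, let $p=p_2$ on $B(x_k,r_k/2)$ and $p=p_1$ on the surrounding annuli (and elsewhere), and test with bumps $u_k$ equal to $1$ on $B(x_k,r_k/2)$, supported in $B(x_k,r_k)$, $|\nabla u_k|\le C/r_k$; then $\|u_k\|_{p(\cdot)}\ge c\,r_k^{N/p_2}$ while $\|\nabla u_k\|_{p(\cdot)}\le C\,r_k^{N/p_1-1}$, and the ratio blows up. Hence any correct proof must use the continuity of $p$ quantitatively --- for instance by localizing to a fixed small scale on which the oscillation of $p$ is small, so that exponents can be interchanged at the price of controlled factors, or, under log-H\"older continuity, through the pointwise bound of $|u|$ by the Riesz potential of $|\nabla u|$. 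Note also that the paper does not prove Lemma \ref{poinc}; it quotes it from \cite{KR}, where continuity enters in exactly this quantitative way, so what you propose is not a variant of the paper's argument but a proof with a genuine gap at the exponent-mismatch step.
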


For the proof of these results, and more about these spaces see \cite{KR}.

In order to have better properties of these spaces, we need more hypotheses on the regularity of $p(x)$.

We say that $p$ is log-H\"{o}lder continuous
if there exists a constant $C$ such that
$$|p(x) - p(y)| \leq \frac{C}{\big|\log\, |x - y|\big|}$$
if $|x - y| < 1/2 $.

It was proved in  \cite{Sam}, Theorem 3  that if one assumes that $p$ is log-H\"{o}lder continuous then,  $C^{\infty}$ is dense in $W^{1,p(\cdot)}(\Omega)$ (see also \cite{BS} and \cite{Di}). See \cite{DHN} for more references on this topic.

\section{Results on $p(x)-$harmonic and subharmonic functions}\label{appA}

\setcounter{equation}{0}
\renewcommand{\theequation}{B.\arabic{equation}}

In this section we will give some of the properties of $p(x)-$harmonic and subharmonic functions.
Some of them are known results and  others are new. For the reader's convenience we will list all the results,
and give the reference when it corresponds. Here $\omega(r)$ is the modulus of continuity of $p(x)$. We will state  which is the type of $\omega$ that
we are considering for each result.

\begin{remark}\label{desip} For any $x$ fixed we have the following inequalities
\begin{align*}
&|\eta-\xi|^{p(x)}\leq C (|\eta|^{p(x)-2} \eta-|\xi|^{p(x)-2} \xi)
(\eta-\xi)&\quad  \mbox{ if } p(x)\geq 2,\\
&
 |\eta-\xi|^2\Big(|\eta|+|\xi|\Big)^{p(x)-2}
\leq C (|\eta|^{p(x)-2} \eta-|\xi|^{p(x)-2} \xi)
(\eta-\xi)&\quad  \mbox{ if } p(x)< 2,\\
\end{align*}
These inequalities say that the function $A(x,q)=|q|^{p(x)-2}q$ is strictly monotone. Then, the comparison principle holds since it follows from the monotonicity of $A(x,q)$.

\end{remark}

\bigskip

The following result, a Cacciopoli type inequality, is included in the proof of Lemma 6 in
\cite{HK},

\begin{lema}\label{caccio} Assume $p(x)$ is bounded and let $u\in W^{1,p(\cdot)}(\Omega)$ be a nonnegative subsolution of the
problem
\begin{equation}
\Delta_{p(x)} u=0 \mbox{
in }\Omega.
\end{equation}
Then, for any $B_r\subset \Omega$
$$\int_{B_{r/2}} |\nabla u|^{p(x)}\, dx \leq
C\int_{B_{r}}\left(\frac{u}{r}\right)^{p(x)}\, dx,$$ where
$C=C(p_{min},p_{max})$.

\end{lema}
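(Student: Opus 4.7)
The plan is to carry out the standard Caccioppoli argument, adapted to variable exponents by using the \emph{constant} cutoff power $p_{max}$ (rather than $p(x)$) so that all residual powers of the cutoff remain nonnegative. Let $\eta \in C_0^{\infty}(B_r)$ be a standard cutoff with $\eta \equiv 1$ on $B_{r/2}$, $0 \leq \eta \leq 1$, and $|\nabla \eta| \leq C/r$. Since $u \geq 0$ and $u \in W^{1,p(\cdot)}(\Omega)$, the function $\varphi = u\eta^{p_{max}}$ is a nonnegative element of $W_0^{1,p(\cdot)}(B_r)$, hence admissible in the subsolution inequality (after the usual truncation $\min(u,k)$ and passage to the limit, if needed).

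Testing against $\varphi$ and using $\nabla \varphi = \eta^{p_{max}} \nabla u + p_{max}\, u\, \eta^{p_{max}-1} \nabla \eta$ gives
$$
\int_{B_r} |\nabla u|^{p(x)} \eta^{p_{max}}\, dx \leq p_{max} \int_{B_r} u\, \eta^{p_{max}-1} |\nabla u|^{p(x)-1} |\nabla \eta|\, dx.
$$
Next I would apply Young's inequality pointwise with conjugate exponents $p(x)/(p(x)-1)$ and $p(x)$, splitting the cutoff as
$$
\eta^{p_{max}-1} = \eta^{\frac{p(x)-1}{p(x)} p_{max}} \cdot \eta^{\frac{p_{max}}{p(x)} - 1},
$$
so that the first factor is absorbed by $|\nabla u|^{p(x)-1}$ to produce $\eta^{p_{max}} |\nabla u|^{p(x)}$, while the second factor is absorbed by $u|\nabla \eta|$ to produce $u^{p(x)} |\nabla \eta|^{p(x)} \eta^{p_{max}-p(x)}$. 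Since $\eta \leq 1$ and $p_{max} \geq p(x)$, this leftover power is at most $1$. Hence for any $\delta > 0$,
$$
u \eta^{p_{max}-1} |\nabla u|^{p(x)-1} |\nabla \eta| \leq \delta |\nabla u|^{p(x)} \eta^{p_{max}} + C(\delta, p_{min}, p_{max})\, u^{p(x)} |\nabla \eta|^{p(x)}.
$$

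Choosing $\delta$ small enough to absorb the first term on the right into the left-hand side, and then restricting the integral on the left to $B_{r/2}$ where $\eta = 1$ and using $|\nabla \eta| \leq C/r$ on the right, yields the desired bound with a constant depending only on $p_{min}$ and $p_{max}$. The main (mild) obstacle is simply the bookkeeping in the Young's inequality step: one must check that the splitting of $\eta^{p_{max}-1}$ produces only nonnegative exponents of $\eta$ at every point (which is exactly why $p_{max}$, not $p(x)$, is used as the cutoff power), so that $\eta \leq 1$ can be used to discard the extra factor.
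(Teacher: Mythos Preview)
Your argument is correct and is exactly the standard Caccioppoli computation; the paper does not give its own proof but simply cites inequality (5) in the proof of Lemma~6 of \cite{HK}, which carries out essentially the same cutoff-and-Young's-inequality argument. Your bookkeeping with the exponent $p_{max}$ on the cutoff is precisely the right adaptation to the variable-exponent setting, and the constants indeed depend only on $p_{min}$ and $p_{max}$.
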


\begin{proof}
See inequality (5) in the proof of Lemma 6 of \cite{HK}.
\end{proof}

\begin{lema}\label{har} Assume $\omega(r)=C (log \frac{1}{r})^{-1}$ and let $u$ be a nonnegative solution of the
problem
\begin{equation}
\Delta_{p(x)} u=0 \mbox{
in }\Omega.
\end{equation}
Then, there exists a constant $C$ such that
$$\sup_{{B_r}(x_0)}u\leq C(\inf_{{B_r}(x_0)}u+r)$$ for any
$r $ with $B_{10r}(x_0)\subset \Omega$. The constant
depends on $N,\omega(.), p_{min}$ and the $L^1(B_r)$-norm of $|u|^{p(\cdot)}$.

\end{lema}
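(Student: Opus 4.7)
The plan is to run a Moser iteration adapted to the variable exponent setting, exploiting the log-H\"older hypothesis $\omega(r)=C/\log(1/r)$ precisely where a standard iteration would fail. The additive term $r$ on the right-hand side, and the dependence of the constant on $\int_{B_r}|u|^{p(x)}\,dx$, both reflect the lack of scaling invariance of $\Delta_{p(x)}$; accordingly, the natural object to control is the shifted function $\bar u:=u+r$, which satisfies $\bar u\ge r$ and hence $|\log \bar u|\le C(1+|\log r|)$.

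First I would derive reverse-H\"older inequalities by testing the weak formulation of $\Delta_{p(x)}u=0$ against $\eta^{p_+}\bar u^{\beta}$ for $\beta\in\R$ and combining Lemma \ref{caccio} with the Sobolev embedding and Young's inequality. This produces a chain
$$
\Big(\pint_{B_{r_1}}\bar u^{q\beta}\,dx\Big)^{1/(q\beta)}\le C\Big(\pint_{B_{r_2}}\bar u^{\beta}\,dx\Big)^{1/\beta}
$$
with a Sobolev exponent $q=q(p_-(B_r))>1$. Each iteration picks up a loss of order $r^{-N\omega(r)/p_-}$ coming from the gap $p_+-p_-\le \omega(r)$; the log-H\"older hypothesis forces $r^{-\omega(r)}\le C$, so that the infinite product of losses stays bounded. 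Letting $\beta\to+\infty$ and $\beta\to-\infty$ respectively yields
$$
\sup_{B_{r/2}}\bar u\le C\Big(\pint_{B_r}\bar u^{q_0}\,dx\Big)^{1/q_0},\qquad \inf_{B_{r/2}}\bar u\ge c\Big(\pint_{B_r}\bar u^{-q_0}\,dx\Big)^{-1/q_0}
$$
for some small fixed $q_0>0$.

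To bridge positive and negative moments I would prove a BMO bound for $\log \bar u$ by testing the equation against $\varphi=\eta^{p_+}\bar u^{1-p(x)}$. The $x$-dependence of the exponent contributes an extra term proportional to $\omega(r)\,|\log \bar u|$ which, thanks to the shift $\bar u\ge r$ together with $\omega(r)|\log r|\le C$, can be absorbed into the main part; the quantity $\int_{B_r}|u|^{p(x)}\,dx$ enters at this point, controlling $|\log \bar u|$ in an averaged sense. Poincar\'e and John--Nirenberg then give $\pint_{B_r}\bar u^{q_0}\cdot\pint_{B_r}\bar u^{-q_0}\le C$, and combining this with the two reverse-H\"older bounds yields $\sup_{B_{r/2}}\bar u\le C\inf_{B_{r/2}}\bar u$, i.e.\ the desired inequality on $B_{r/2}$; a standard chaining within $B_{10r}(x_0)$ upgrades it to $B_r$.

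The main obstacle is the logarithmic estimate. In the constant-exponent case it is routine, but here differentiating $\bar u^{1-p(x)}$ produces $\nabla p$-type terms whose absorption demands a delicate balance between $\omega(r)$, $|\log \bar u|$ and the quantitative shift by $r$. Keeping all constants uniform in $r$, while tracking how $\int_{B_r}|u|^{p(x)}\,dx$ enters as the only a priori ingredient beyond $N$, $p_{min}$ and $\omega$, is where the bulk of the technical effort will be concentrated.
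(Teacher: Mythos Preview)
The paper does not actually prove this lemma: its entire proof is the sentence ``See Corollary 5.13 in \cite{HKLMP}.'' So there is nothing to compare against at the level of arguments; you are attempting to reconstruct a proof that the authors delegated to the literature. Your outline --- Moser iteration on the shifted function $\bar u=u+r$, with the log-H\"older hypothesis used to make the factors $r^{-(p_+-p_-)}$ harmless, followed by a John--Nirenberg bridge for $\log\bar u$ --- is indeed the standard scheme behind Harnack inequalities for $p(x)$-harmonic functions, and is in the spirit of the reference \cite{HKLMP} cited in the paper.

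One concrete point to fix: your logarithmic step uses the test function $\varphi=\eta^{p_+}\bar u^{1-p(x)}$ and speaks of ``$\nabla p$-type terms''. Under the lemma's hypotheses $p$ is only log-H\"older, so $\nabla p$ need not exist and this test function is not admissible as written. The usual cure is to test with a \emph{constant} negative exponent, e.g.\ $\eta^{p_+}\bar u^{1-p_-}$ (or $\bar u^{1-p_+}$), and to absorb the discrepancy $|\nabla u|^{p(x)}-|\nabla u|^{p_-}$ using $p_+-p_-\le\omega(2r)$ together with $r^{-\omega(r)}\le C$ and the shift $\bar u\ge r$; this is exactly where the dependence on $\int_{B_r}|u|^{p(\cdot)}\,dx$ arises. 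With that correction your plan is sound. Also, Lemma~\ref{caccio} as stated is for subsolutions and gives only the upper half of the iteration; you will need the companion Caccioppoli estimate for supersolutions (negative powers of $\bar u$) separately.
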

\begin{proof}
See Corollary 5.13 in \cite{HKLMP}.
\end{proof}

\begin{remark}\label{harna2}
Assume $u$ is a nonnegative solution of the
problem
\begin{equation}
\Delta_{p(x)} u=0 \mbox{
in }\Omega.
\end{equation}
Let $R, \bar{y}$ such that $B_{10R}(\bar{y})\subset \Omega$, $x_0\in \Omega$ and $r>0$. Let $\bar{x}=\frac{\bar{y}-x_0}{r}$  and
$\bar{u}(x)=\frac{u(x_0+rx)}{r}$. Then, for any $\rho<R/r$ we have,
$$\sup_{{B_{\rho}}(\bar{x})}\bar u\leq C(\inf_{{B_\rho}(\bar{x})}\bar{u}+\rho),$$
where $C$ is the constant of the previous Lemma. In particular, $C$ may be taken independent of $r$ (it depends on the $L^1(\Omega)-$norm of $|u|^{p(\cdot)}$).

\end{remark}
\begin{proof}
Let $|x-\bar{x}|<\rho$ and $y=x_0+rx$ then $|y-\bar{y}|=r|x-\bar{x}|<r\rho<R$. Since,
$$\sup_{B_{\rho r}(\bar{y})}u\leq C (\inf_{{B_{\rho r}}(\bar{y})}{u}+\rho r)$$
there holds that
$$\sup_{B_{\rho r}(\bar{y})}\frac{u}{r}\leq C (\inf_{{B_{\rho r}}(\bar{y})}\frac{u}{r}+\rho ).
$$
Then,
$$\sup_{B_{\rho }(\bar{x})}\bar{u}\leq C (\inf_{{B_{\rho }}(\bar{x})}\bar{u}+\rho ).
$$
\end{proof}

The following result was proved in Theorem 2.2 in \cite{AM},

\begin{teo}\label{regu} Assume $\omega(r)=C_0 r^{\alpha_0}$  for some $0<\alpha< 1$, and let $u$ be a solution of the
problem
\begin{equation}
\Delta_{p(x)} u=0 \mbox{
in }\Omega.
\end{equation}
Then, for any $\Omega'\subset \subset\Omega$ there exists a constant $C$ depending on $\|| u|^{p(x)}\|_{L^{1}(\Omega)}$, $\||\nabla u|^{p(x)}\|_{L^{1}(\Omega)}$, $p_{min},p_{max}$,  $\omega(r)$ and $\Omega'$
such that
$$\|u\|_{C^{1,\alpha}(\Omega')}\leq C.$$

\end{teo}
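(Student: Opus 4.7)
The plan is to use the classical Acerbi--Mingione freezing-and-comparison scheme: on small balls, approximate $u$ by the solution of the \emph{constant-exponent} $p_0$-Laplace equation (with $p_0 = p(x_0)$), exploit the known $C^{1,\alpha}$ regularity for that equation, and bootstrap via a Campanato-type decay estimate. The Hölder continuity of $p(x)$ controls the freezing error at scale $r$ by a power $r^{\alpha_0}$, which is exactly what is needed to close the iteration.

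\textbf{Step 1 (a priori boundedness of $u$ and $|\nabla u|$).} First, Lemma \ref{har} (log-Hölder is a weaker hypothesis than $r^{\alpha_0}$-Hölder, so it applies) yields $u \in L^\infty_{\mathrm{loc}}(\Omega)$ with a bound depending only on the stated quantities. Then Lemma \ref{caccio} gives an estimate for $\int_{B_{r/2}} |\nabla u|^{p(x)}$ in terms of $\|u\|_{L^\infty(B_r)}$. A Moser-type iteration applied to $|\nabla u|^2$ (using the differentiated equation where coefficients involving $\nabla p$ are now bounded since $p$ is Hölder, combined with the Gehring-type higher integrability $|\nabla u|^{p(x)} \in L^{1+\delta}_{\mathrm{loc}}$ obtained from a reverse Hölder inequality) produces a uniform local bound $\|\nabla u\|_{L^\infty(\Omega'')} \le K$ for any $\Omega'\subset\subset \Omega''\subset\subset\Omega$.

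\textbf{Step 2 (freezing estimate).} Fix $x_0 \in \Omega'$ and a small $r$, and let $p_0 := p(x_0)$. Define the $p_0$-harmonic replacement $v$ of $u$ in $B_r(x_0)$, namely $\Delta_{p_0} v = 0$ in $B_r(x_0)$ with $v - u \in W_0^{1,p_0}(B_r(x_0))$. Subtract the weak formulations of the two equations and test with $u - v$:
\begin{equation*}
\int_{B_r} \bigl(|\nabla u|^{p_0-2}\nabla u - |\nabla v|^{p_0-2}\nabla v\bigr)\cdot \nabla(u-v)\,dx
= \int_{B_r} \bigl(|\nabla u|^{p_0-2} - |\nabla u|^{p(x)-2}\bigr)\nabla u \cdot \nabla(u-v)\,dx.
\end{equation*}
The monotonicity inequalities in Remark \ref{desip} bound the LHS from below by $\int_{B_r}|\nabla u - \nabla v|^{p_0}$ if $p_0 \ge 2$, and by the weighted analogue if $p_0 < 2$. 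For the RHS, writing $|t|^{a} - |t|^{b} = |t|^{b}(|t|^{a-b}-1)$ and using $|p(x) - p_0| \le C_0 r^{\alpha_0}$ together with the $L^\infty$ bound on $|\nabla u|$ from Step 1, one obtains
\begin{equation*}
\int_{B_r}|\nabla u - \nabla v|^{p_0}\,dx \;\le\; C\, r^{\alpha_0}\, r^N,
\end{equation*}
with $C$ depending only on the quantities in the statement. The case $p_0 < 2$ requires Young's inequality to pass from the weighted estimate to an unweighted one, as in \eqref{rn4}.

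\textbf{Step 3 (Campanato iteration).} By the classical DiBenedetto--Tolksdorf--Uralt\-seva $C^{1,\beta}$ theory for the constant-exponent $p_0$-Laplacian, there exists $\beta = \beta(N,p_{\min},p_{\max}) \in (0,1)$ such that for every $\rho \le r$,
\begin{equation*}
\int_{B_\rho(x_0)} |\nabla v - (\nabla v)_{x_0,\rho}|^{p_0}\,dx \;\le\; C\Big(\frac{\rho}{r}\Big)^{N + \beta p_0}\int_{B_r(x_0)} |\nabla v - (\nabla v)_{x_0,r}|^{p_0}\,dx.
\end{equation*}
Combining this with the freezing estimate of Step 2 via the triangle inequality gives
\begin{equation*}
\Phi(x_0,\rho) \;\le\; C\Big(\frac{\rho}{r}\Big)^{N+\beta p_0} \Phi(x_0,r) + C\,r^{N+\alpha_0},
\end{equation*}
where $\Phi(x_0,\rho) := \int_{B_\rho(x_0)}|\nabla u - (\nabla u)_{x_0,\rho}|^{p_0}\,dx$. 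A standard iteration lemma (see, e.g., Giaquinta) then yields $\Phi(x_0,\rho) \le C \rho^{N + \alpha p_0}$ for every $\alpha \in (0,\min\{\beta,\alpha_0/p_0\})$, uniformly for $x_0 \in \Omega'$. Since $p_0 = p(x_0)$ varies continuously and stays in $[p_{\min},p_{\max}]$, one obtains a uniform Campanato estimate that, by Campanato's characterization of Hölder spaces, proves $\nabla u \in C^{0,\alpha}_{\mathrm{loc}}$ with the asserted dependence of constants.

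\textbf{Main obstacle.} The delicate point is Step 1: obtaining the uniform $L^\infty$ bound on $|\nabla u|$ that is needed to control the freezing error in Step 2. Without this bound, the factor $|\nabla u|^{p_0 - 1}$ that appears on the RHS of the comparison in Step 2 cannot be absorbed and the $r^{\alpha_0}$-smallness is lost. The proof of the gradient bound requires differentiating the equation (so Hölder, not merely log-Hölder, continuity of $p$ is essential, because $\nabla p$ enters the coefficients) and a Moser iteration tailored to the fact that the ellipticity and the nonlinearity exponent both depend on $x$. A secondary technical point is that the comparison in Step 2 must be split into the $\{p(x)\ge 2\}$ and $\{p(x)<2\}$ cases with distinct monotonicity inequalities, as in Remark \ref{desip}, so the constants in the decay estimate depend on $p_{\min}$ and $p_{\max}$ in a nontrivial way.
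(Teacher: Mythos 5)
First, note that the paper does not prove this statement at all: it is quoted directly from Acerbi--Mingione (\cite{AM}, Theorem 2.2), so the only fair comparison is with the argument in that reference, whose general skeleton (freezing the exponent, comparison with the constant-exponent $p_0$-harmonic replacement, excess-decay/Campanato iteration) your Steps 2--3 do reproduce correctly in outline.

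The genuine gap is Step 1. You claim a local bound $\|\nabla u\|_{L^\infty(\Omega'')}\le K$ obtained ``using the differentiated equation where coefficients involving $\nabla p$ are now bounded since $p$ is H\"older''. This is not available under the hypothesis of the theorem: H\"older continuity of $p$ gives $|p(x)-p(y)|\le C_0|x-y|^{\alpha_0}$ but says nothing about the existence, let alone boundedness, of $\nabla p$, and the differentiated equation has $\nabla p$ (times $\log|\nabla u|$) in its coefficients. The paper itself makes exactly this point in Remark \ref{nodiver}: writing the operator in non-divergence form, or differentiating it as in Lemma \ref{ecugradu}, requires $p$ Lipschitz. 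So the Moser iteration you invoke cannot be run, and since your Step 2 freezing estimate is premised on the $L^\infty$ gradient bound, the whole scheme as written is circular and does not close. In \cite{AM} this is resolved differently: the freezing error on $B_r$ is controlled not by a pointwise gradient bound but by Gehring-type higher integrability of $|\nabla u|^{p(x)}$ (your parenthetical Gehring remark is the right tool, but it must replace, not supplement, the sup bound), together with the elementary estimate $\big||t|^{p(x)-2}-|t|^{p_0-2}\big||t|\le C|p(x)-p_0|\,(1+|t|)^{p_0-1+\sigma}(1+|\log(1+|t|)|)$; local boundedness of $\nabla u$ is then an \emph{output} of the comparison iteration (using the a priori Lipschitz estimates for the frozen constant-exponent problem), after which the excess-decay argument of your Step 3 yields the $C^{1,\alpha}$ estimate. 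With Step 1 repaired along these lines your outline matches the cited proof; as written, Step 1 would fail and your own ``Main obstacle'' paragraph repeats the same incorrect claim that H\"older continuity of $p$ makes $\nabla p$ enter as a bounded coefficient.
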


The following is a consequence of the $C^{1,\alpha}$ regularity of
the solutions and the Cacciopoli inequality
\begin{lema}\label{gradient-estimate}
Assume $\omega(r)=C_0 r^{\alpha_0}$. Let $u$ be a solution  of the problem
\begin{equation}
\Delta_{p(x)} u=0 \mbox{
in }B_R(y).
\end{equation}
Then, there exists a constant $C$ depending on $\| |u|^{p(x)}\|_{L^{1}(B_R(y))}$, $\||\nabla u|^{p(x)}\|_{L^{1}(B_R(y))}$, $p_{min},p_{max}$ and  $\omega(r)$  such that if $R\leq 1$ we
have,
$$|\nabla u(y)|\leq C\Big(1+\frac{1}{R}\sup_{B_R(y)}
u\Big)^{p_+/p_-}$$
where $p_+=\sup_{B_R(y)}p$, $p_-=\inf_{B_R(y)}p$.
\end{lema}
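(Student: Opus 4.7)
The plan is to reduce to the unit ball by rescaling and then combine Cacciopoli's inequality (Lemma \ref{caccio}) with the interior $C^{1,\alpha}$ regularity of Theorem \ref{regu}. The exponent $p_+/p_-$ appearing in the statement reflects the precise defect of multiplicative invariance of $\Delta_{p(x)}$: for constant $p$ the equation is invariant under $u\mapsto u/K$ and one easily obtains a linear bound $(1+M/R)^1$, while for variable $p(x)$ the non-homogeneity of the Luxemburg norm (Remark \ref{equi}) inflates the exponent from $1$ to $p_+/p_-$.

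First I would set
\[
v(x):=\tfrac{1}{R}\,u(y+Rx),\qquad x\in B_1.
\]
A chain-rule computation yields $\nabla_x v(x)=(\nabla u)(y+Rx)$ and $\mathrm{div}_x(|\nabla v|^{p_R(x)-2}\nabla v)=R\,\Delta_{p(x)}u(y+Rx)=0$, where $p_R(x):=p(y+Rx)$. Since $R\le 1$, $p_R$ inherits the H\"older modulus of continuity of $p$, and the local extrema of $p_R$ on $B_1$ coincide with $p_\pm$ on $B_R(y)$. Moreover $\nabla v(0)=\nabla u(y)$ and $\sup_{B_1}v=M/R$, where $M:=\sup_{B_R(y)}u$. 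It thus suffices to show that, for any $p_R$-harmonic $v$ on $B_1$,
\[
|\nabla v(0)|\le C\bigl(1+\sup_{B_1}v\bigr)^{p_+/p_-}.
\]

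Set $K:=1+\sup_{B_1}v\ge 1$. Applying Lemma \ref{caccio} to $v^\pm$ (each a nonnegative subsolution) on $B_1$ with $r=1$ gives
\[
\int_{B_{1/2}}|\nabla v|^{p_R(x)}\,dx\le C\int_{B_1}|v|^{p_R(x)}\,dx\le C\max(K^{p_+},K^{p_-})\le CK^{p_+},
\]
and Remark \ref{equi} converts this modular bound into the Luxemburg-norm bound $\|\nabla v\|_{L^{p_R(\cdot)}(B_{1/2})}\le CK^{p_+/p_-}$, and analogously $\|v\|_{L^{p_R(\cdot)}(B_1)}\le CK^{p_+/p_-}$. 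Theorem \ref{regu} applied on $B_{1/2}$ then produces $\|v\|_{C^{1,\alpha}(B_{1/4})}\le\tilde C$ with $\tilde C$ depending linearly on these $L^{p_R(\cdot)}$-norms, once the a~priori estimate of \cite{AM} is re-examined to extract this linearity. In particular $|\nabla v(0)|\le CK^{p_+/p_-}$, and undoing the scaling yields the desired bound $|\nabla u(y)|\le C(1+M/R)^{p_+/p_-}$.

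The main obstacle is the last step: verifying that the a~priori $C^{1,\alpha}$ constant in Theorem \ref{regu} scales linearly with $\|v\|_{L^{p_R(\cdot)}}+\|\nabla v\|_{L^{p_R(\cdot)}}$ (with the dependencies on $p_\pm$, $\omega$ and the geometry fixed). For constant $p$ this linearity follows at once from the invariance $v\mapsto v/K$ of the $p$-Laplacian; for variable $p$ one has to either re-examine the derivation in \cite{AM}, or equivalently study the weighted equation $\mathrm{div}(K^{p_R(x)-1}|\nabla(v/K)|^{p_R(x)-2}\nabla(v/K))=0$ satisfied by $v/K$ and use that the multiplier $K^{p_R(x)-1}$ is trapped between $K^{p_--1}$ and $K^{p_+-1}$. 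The translation between modular and Luxemburg norms in Remark \ref{equi}, whose sharp ratio is precisely $p_+/p_-$, is the structural reason this exponent appears in the final estimate.
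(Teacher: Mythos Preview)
Your proposal has a genuine gap at precisely the place you flag as the ``main obstacle'': you need the $C^{1,\alpha}$ constant in Theorem \ref{regu} to depend \emph{linearly} on the Luxemburg norms of $v$ and $\nabla v$, but Theorem \ref{regu} as stated gives only unspecified dependence, and invoking a re-examination of \cite{AM} or passing to the weighted equation for $v/K$ is a programme, not a proof. Without that linearity your argument yields only $|\nabla v(0)|\le \tilde C(K)$ for some unknown function of $K$, which is not the claimed bound.

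The paper avoids this obstacle entirely, and the reason it can is already visible in the statement of the Lemma: the constant $C$ is \emph{allowed} to depend on $\||u|^{p(\cdot)}\|_{L^1(B_R(y))}$ and $\||\nabla u|^{p(\cdot)}\|_{L^1(B_R(y))}$. So the paper works directly on $B_R(y)$ with no rescaling. It uses Theorem \ref{regu} only qualitatively, to obtain $|\nabla u(x)-\nabla u(y)|\le C|x-y|^\alpha$ on $B_{R/2}(y)$ with a constant $C$ absorbing those allowed dependencies. When $|\nabla u(y)|\ge 1$ this gives the pointwise inequality
\[
|\nabla u(y)|^{p_-}\le |\nabla u(y)|^{p(x)}\le C\bigl(|\nabla u(x)|^{p(x)}+1\bigr),
\]
and averaging in $x$ over $B_{R/2}(y)$ followed by Cacciopoli (Lemma \ref{caccio}) yields
\[
|\nabla u(y)|^{p_-}\le C\Bigl(1+\pint_{B_R(y)}\Bigl(\frac{|u|}{R}\Bigr)^{p(x)}dx\Bigr)\le C\Bigl(1+\Bigl(\frac{1}{R}\sup_{B_R(y)}u\Bigr)^{p_+}\Bigr).
\]
Taking the $p_-$-th root gives the exponent $p_+/p_-$ directly, with no Luxemburg norms and no scaling analysis of the regularity constant. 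In effect you were attempting to prove a stronger statement (constant depending only on $p_\pm,\omega,N$) than what is asserted; the paper exploits the weaker hypothesis to give a short argument.
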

\begin{proof}
By  Theorem \ref{regu}, we have for $x\in
B_{R/2}(y)$,
$$|\nabla u(x)-\nabla u(y)|\leq C|x-y|^{\alpha},$$
for some constants $C>0$ and $0<\alpha<1$. Therefore, if $x\in
B_{R/2}(y)$
$$|\nabla u(y)|\leq |\nabla u(x)| + CR^{\alpha}.$$
If $|\nabla u(y)|\geq 1$, $p_-=p_-(B_R(y))$,$p_+=p_+(B_R(y))$, recalling that $R\le1$ we get,
$$|\nabla u(y)|^{p_-}\leq |\nabla u(y)|^{p(x)} \leq C|\nabla u(x)|^{p(x)}+ C.$$
Integrating for $x\in B_{R/2}(y)$,
$$|\nabla u(y)|^{p_-}\leq  C\Big(1+\pint_{B_{R/2}(y)}|\nabla u(x)|^{p(x)}\Big).$$
Applying Cacciopoli inequality we have, since $R\leq 1$,
\begin{align*}|\nabla u(y)|^{p_-}&\leq  C_1\Big(1+\pint_{B_{R}(y)}
\Big(\frac{|u(x)|}{R}\Big)^{p(x)}\Big)\\&\leq
C_1\Big(2+\pint_{B_{R}(y)} \Big(\frac{|u(x)|}{R}\Big)^{p_+}\Big)\\& \leq C\Big(1+
\Big(\frac{1}{R}\sup_{B_{R}(y)}u(x)\Big)^{p_+}\Big).
\end{align*}
We obtain the desired result.
\end{proof}

\begin{remark}\label{nodiver}
In some of the proofs we  need to look at the $p(x)-$Laplacian as an operator in  non-divergence form. In those cases we have to assume $p(x)$ Lipschitz so that we can differentiate the function,
$$A(x,q)=|q|^{p(x)-2} q.$$
If  we take a function $u$, with $c_1\leq |\nabla u|\leq c_2$ differentiating we obtain,
\begin{align*}
a_{ij}(x,\nabla u)&=\frac{\partial{A_i}}{\partial q_j}(x,\nabla u)=|\nabla u|^{p-2}
\big(\delta_{ij}+\frac{(p-2)}{|\nabla u|^2} u_{x_i}
u_{x_j}\big)\\
\frac{\partial A_i}{\partial x_k}(x,\nabla u)& = |\nabla u|^{p-2}\log|\nabla u|\, p_{x_k} u_{x_i}
.\end{align*}
Then, we have a the following non-divergence form for the $p(x)-$Laplacian,
\begin{equation*}
\Delta_{p(x)} u= \L u
\end{equation*}
where
\begin{equation}\label{nodiver2}
\L w:=a_{ij}(x,\nabla u) w_{x_i x_j}+ |\nabla u|^{p-2}\log|\nabla u| \  p_{x_i} w_{x_i}.
\end{equation}

Observe that $a_{ij}=|\nabla u|^{p(x)-2}b_{ij}$ and $b_{ij}$ is uniformly elliptic with constant of ellipticity $\beta$,  independent of the gradient of $u$. We call
\begin{equation}\label{Tu}
Tw:= b_{ij}(x,\nabla u)w_{x_ix_j}+\log|\nabla u|p_{x_i}w_{x_i}.
\end{equation}
\end{remark}

The following Lemma is the construction of barriers required in several proofs.

\begin{lema}\label{exp}
Suppose that $p(x)$ is Lipschitz continuous. Let
$w_{\mu}=Me^{-\mu|x|^2}$, for $M>0$ and $r_1\ge|x|\ge r_2>0$. Then, there
exist ${\mu}_0,\ep_0>0$ such that, if $\mu>{\mu}_0$ and $\|\nabla
p\|_{\infty}\leq \ep_0$,
\begin{align*}&\mu^{-1}e^{\mu|x|^2}M^{-1}|\nabla w|^{2-p}\Delta_{p(x)}w_{\mu}\geq C_1 (\mu-C_2 \|\nabla
p\|_{\infty} |\log M|)
 \mbox{ in }B_{r_1}\setminus B_{r_2}.\end{align*}
Here $C_1,C_2$ depend only on $r_2,r_1, p_+,p_-$,\ \ \
${\mu}_0={\mu}_0(p_+,p_-,N,\|\nabla p\|_{\infty},r_2,r_1)$
and \newline
$\ep_0=\ep_0(p_+,p_-,r_1,r_2)$.
\end{lema}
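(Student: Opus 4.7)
The argument is a direct (if somewhat lengthy) computation of $\Delta_{p(x)}w_\mu$ that isolates the ``good'' $\mu$--term coming from the radial structure of $w_\mu$ from the ``bad'' logarithmic terms created by differentiating the exponent $p(x)$.

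First, I would compute the pointwise gradient. Writing $w=w_\mu=Me^{-\mu|x|^2}$, one has $\nabla w=-2\mu x\,w$ and $|\nabla w|=2\mu|x|w$, so
\[
|\nabla w|^{p(x)-2}\nabla w=-(2\mu)^{p(x)-1}|x|^{p(x)-2}w^{p(x)-1}x.
\]
Setting $h(x)=(2\mu)^{p(x)-1}|x|^{p(x)-2}w^{p(x)-1}$, the $p(x)$--Laplacian becomes $\Delta_{p(x)}w=-\nabla h\cdot x-Nh$. Using $\log h=(p-1)\log(2\mu M)+(p-2)\log|x|-(p-1)\mu|x|^2$ and differentiating, a short calculation yields
\[
\nabla h\cdot x=h\Bigl\{\nabla p\cdot x\bigl[\log(2\mu|x|M)-\mu|x|^2\bigr]+(p-2)-2\mu(p-1)|x|^2\Bigr\}.
\]
Since $h=2\mu w|\nabla w|^{p-2}$, I obtain the explicit identity
\begin{align*}
\Delta_{p(x)}w=2\mu w|\nabla w|^{p-2}\Bigl\{2\mu(p-1)|x|^2-\nabla p\cdot x\bigl[\log(2\mu|x|M)-\mu|x|^2\bigr]-(p-2)-N\Bigr\}.
\end{align*}

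Second, I would multiply by $\mu^{-1}e^{\mu|x|^2}M^{-1}|\nabla w|^{2-p}$ and use $e^{\mu|x|^2}w/M=1$, which collapses the prefactor to the constant $2$, giving
\[
\mu^{-1}e^{\mu|x|^2}M^{-1}|\nabla w|^{2-p}\Delta_{p(x)}w_\mu=2\Bigl\{2\mu(p-1)|x|^2-\nabla p\cdot x\bigl[\log(2\mu|x|M)-\mu|x|^2\bigr]-(p-2)-N\Bigr\}.
\]

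Third, I would estimate each piece on the annulus $r_2\le|x|\le r_1$. The leading term satisfies $2\mu(p-1)|x|^2\ge 2\mu(p_--1)r_2^2$ (here the hypothesis $p_->1$ is essential). Expanding the bracket as $\log(2\mu)+\log|x|+\log M-\mu|x|^2$ and using $|\nabla p\cdot x|\le\|\nabla p\|_\infty r_1$, the dangerous $\mu$--order contribution from the error is $\|\nabla p\|_\infty r_1\cdot\mu|x|^2\le\|\nabla p\|_\infty r_1^3\mu$; choosing $\ep_0=\ep_0(p_-,r_1,r_2)$ so that $\ep_0 r_1^3\le\tfrac12(p_--1)r_2^2$, this gets absorbed into the good term. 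The contributions $\|\nabla p\|_\infty r_1(|\log(2\mu)|+|\log r_1|+|\log r_2|)$ grow at most logarithmically in $\mu$ and together with the bounded constants $|p-2|+N$ are dominated by $C_1\mu$ once $\mu\ge\mu_0(p_+,p_-,N,\|\nabla p\|_\infty,r_1,r_2)$. The only remaining error is $-2\nabla p\cdot x\log M$, which is bounded in absolute value by $2\|\nabla p\|_\infty r_1|\log M|$ and produces exactly the advertised right--hand side $C_1(\mu-C_2\|\nabla p\|_\infty|\log M|)$.

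The only mildly delicate point is the third step: the term $\nabla p\cdot x\cdot(-\mu|x|^2)$ is of order $\mu$ and cannot be dominated by $\mu$ itself through largeness of $\mu_0$; one must instead rely on smallness of $\|\nabla p\|_\infty$, and this is what forces the independent threshold $\ep_0$ (depending only on $p_-,r_1,r_2$) into the statement. Once this absorption is done, all remaining error terms are either bounded constants or of lower order in $\mu$, and the choice of $\mu_0$ and the constants $C_1,C_2$ depending on $r_1,r_2,p_+,p_-$ is routine.
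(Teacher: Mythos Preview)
Your proof is correct and follows essentially the same route as the paper's: both compute $\Delta_{p(x)}w_\mu$ explicitly to obtain (up to the factor $2$) the identity
\[
e^{\mu|x|^2}(2M\mu)^{-1}|\nabla w|^{2-p}\Delta_{p(x)}w=(p-1)2\mu|x|^2+\mu\langle x,\nabla p\rangle|x|^2-(p-2+N)-\langle x,\nabla p\rangle\log(2\mu|x|M),
\]
then absorb the $\mu$--order error $\mu\langle x,\nabla p\rangle|x|^2$ into the leading term $2(p_--1)r_2^2\mu$ via smallness of $\|\nabla p\|_\infty$ (the paper chooses $\ep_0$ so that $2(p_--1)r_2^2-r_1(r_1^2+1)\ep_0\ge\tfrac32(p_--1)r_2^2$, also swallowing the $\log\mu$ term through $|\log\mu|/\mu\le1$), handle the remaining bounded and $\log\mu$ terms by taking $\mu\ge\mu_0$, and leave the $\|\nabla p\|_\infty|\log M|$ contribution as the advertised right--hand side. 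The only cosmetic difference is that the paper reaches the identity via the non--divergence form of Remark~\ref{nodiver}, whereas you compute $\mathrm{div}(-hx)$ directly; the resulting formula and the subsequent estimates are identical.
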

\begin{proof}
First  note that by Remark \ref{nodiver} $$\Delta_{p(x)}w=|\nabla w|^{p-2}
\Big\{\frac{(p-2)}{|\nabla w|^2}\sum_{i,j}
w_{x_i}w_{x_j}w_{x_ix_j}+\triangle w+ \langle \nabla w,\nabla
p\rangle \log |\nabla w|\Big\}.$$ Computing, we have
\begin{align}\label{gradw}
w_{x_i}=-2\mu M x_i, e^{-\mu|x|^2},\  w_{x_i x_j}=M(4\mu^2 x_i
x_j-2\mu \delta_{ij}) e^{-\mu|x|^2},\  |\nabla w|=2M\mu
|x|e^{-\mu|x|^2}.
\end{align}
Therefore using \eqref{gradw}
 we obtain,
\begin{align*}\di &e^{\mu|x|^2}(2M\mu)^{-1}|\nabla w|^{2-p}\Delta_{p(x)}w\\
&=(p-2)(2\mu|x|^2-1)+(2\mu|x|^2-N)
 -  \langle x,\nabla p \rangle
(\log(M)+\log(|x|2)+\log\mu)+\mu\langle x,\nabla p\rangle |x|^2\\
&=(p-1)2\mu|x|^2+\mu \langle x,\nabla
p\rangle|x|^2 -(p-2+N) - \langle x,\nabla p \rangle (\log M+\log
\mu+\log (2 |x|))\\
&\ge (2(p_--1)r_2^2-r_1^3\|\nabla p\|_{\infty}) \mu-
r_1 \|\nabla p\|_{\infty}|\log \mu|-(p_+-2+N)- r_1 \|\nabla
p\|_{\infty} (\log M+ C_{r_1,r_2}))\\
& \ge (2 (p_--1)r_2^2- r_1
\|\nabla p\|_{\infty}(r_1^2+1 ) \mu
-(p_+-2+N)-r_1 \|\nabla p\|_{\infty}
(\log M+C_{r_1,r_2}).
\end{align*}

In the last inequality we have used that $\frac{\log{\mu}}{\mu}\leq 1$ if $\mu\geq 1$.

Let $\ep_0>0$ such that $$2 (p_--1) r_2^2- r_1 (r_1^2+1)\ep_0\geq
\frac{3}{2} (p_--1) r_2^2.$$ If $\|\nabla p\|_{\infty}\leq \ep_0$
we obtain

\begin{align*}\di e^{\mu|x|^2}(2M\mu)^{-1}&|\nabla w|^{2-p}\Delta_{p(x)}w\ge
\\&  \frac32 (p_--1)r_2^2\mu -(p_+-2+N)-r_1 \|\nabla
p\|_{\infty} (\log M+ C_{r_1,r_2}).
\end{align*}
Now, if we take $\mu\geq \mu_0 =\mu_0(p_+,p_-,N,r_2,r_1,\|\nabla
p\|_{L^{\infty}})$ we obtain that
\begin{align*}\di e^{\mu|x|^2}(2M\mu)^{-1}|\nabla w|^{2-p}&\Delta_{p(x)}w
 \geq C_1 (\mu-C_2 \|\nabla p\|_{\infty} |\log M|)).
\end{align*}
with $C_1,C_2$ depending only on $p_-,r_1,r_2$.

\end{proof}

\begin{lema}\label{ecugradu}
Assume $p(x)$ is Lipschitz. Let $u$ be a solution of the problem
\begin{equation}
{\Delta_{p(x)}}u=0 \mbox{ in
}\Omega.
\end{equation}
with $0<c_1\leq |\nabla u|\leq c_2$.
Then $v=|\nabla u|$ satisfies,
$$
- \mbox{div } D\nabla v+B\nabla v\leq\mbox{div } H\quad \mbox{ in }
\Omega
$$
where,
\begin{align*}
D_{ij}(x,\nabla u)&=|\nabla u|^{p-1}
\big(\delta_{ij}+\frac{(p-2)}{|\nabla u|^2} u_{x_i}
u_{x_j}\big),\\
H(x,\nabla u)&=|\nabla u|^{p-2}\log|\nabla u|\langle \nabla u,
\nabla p \rangle \nabla u,\\
B(x,\nabla u)&=|\nabla u|^{p-1}\log|\nabla u|\
\nabla p.
\end{align*}

\end{lema}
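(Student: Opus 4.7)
\medskip

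\noindent\textit{Proof plan.} The plan is to differentiate the divergence-form equation $\partial_{x_i}\bigl(|\nabla u|^{p(x)-2} u_{x_i}\bigr)=0$ with respect to each $x_k$, obtain an equation for $u_{x_k}$, multiply by $u_{x_k}$, sum over $k$, and then rewrite the result as a differential inequality for $v=|\nabla u|$ via the identity $v\,v_{x_j}=\sum_k u_{x_k} u_{x_k x_j}$. Before starting, I note that the hypothesis $c_1\le|\nabla u|\le c_2$ makes the equation uniformly elliptic, and since $p(x)$ is Lipschitz we may freeze the coefficients and invoke standard Schauder/$W^{2,q}$ theory to conclude that $u\in C^{2,\alpha}_{\rm loc}$; all computations below can thus be done pointwise (and then re-expressed distributionally). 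Alternatively, every step is justified by using a smooth compactly supported test function and integrating by parts once.

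Concretely, differentiating $\partial_{x_i}(|\nabla u|^{p(x)-2}u_{x_i})=0$ with respect to $x_k$ and using Remark \ref{nodiver} to recognize $a_{ij}(x,\nabla u)=\partial A_i/\partial q_j$ and $\partial A_i/\partial x_k = g(x)\,p_{x_k}u_{x_i}$, with $g=v^{p-2}\log v$, yields
\begin{equation*}
\partial_{x_i}\Bigl[a_{ij}(x,\nabla u)\,u_{x_k x_j}\Bigr]+\partial_{x_i}\Bigl[g\,p_{x_k}u_{x_i}\Bigr]=0\qquad\text{for each }k.
\end{equation*}
I would then multiply by $u_{x_k}$ and sum over $k$. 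For the first sum, the product rule together with $\sum_k u_{x_k}u_{x_k x_j}=v\,v_{x_j}$ gives
\begin{equation*}
\sum_k u_{x_k}\,\partial_{x_i}\bigl[a_{ij}u_{x_k x_j}\bigr]=\partial_{x_i}\bigl[a_{ij}\,v\,v_{x_j}\bigr]-\sum_{i,j,k} a_{ij}\,u_{x_k x_i}\,u_{x_k x_j}.
\end{equation*}
Note $D_{ij}=v\,a_{ij}$, so the first term on the right is precisely $\mathrm{div}(D\nabla v)$. For the second sum, the product rule combined with $\sum_i u_{x_k x_i}u_{x_i}=v\,v_{x_k}$ gives
\begin{equation*}
\sum_k u_{x_k}\,\partial_{x_i}\bigl[g\,p_{x_k}u_{x_i}\bigr]=\partial_{x_i}\bigl[g\,\langle\nabla u,\nabla p\rangle\,u_{x_i}\bigr]-g\,v\,\langle\nabla p,\nabla v\rangle=\mathrm{div} H-B\cdot\nabla v,
\end{equation*}
since $gv=v^{p-1}\log v$ matches the coefficient in $B$, and the first divergence is exactly $\mathrm{div} H$ by the definition of $H$.

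Combining these two identities produces the equality
\begin{equation*}
\mathrm{div}(D\nabla v)+\mathrm{div} H-B\cdot\nabla v=\sum_{i,j,k} a_{ij}\,u_{x_k x_i}\,u_{x_k x_j}.
\end{equation*}
The right-hand side is nonnegative: writing $a=S^{T}S$ via Cholesky (possible since $a$ is positive definite) and $M_{ki}=u_{x_k x_i}$, the sum equals $\|SM\|_F^{2}\ge 0$. Thus
$
\mathrm{div}(D\nabla v)-B\cdot\nabla v+\mathrm{div} H\ge 0,
$
which is exactly the asserted inequality $-\mathrm{div}(D\nabla v)+B\cdot\nabla v\le\mathrm{div} H$. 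The main obstacle in this plan is the regularity justification needed to differentiate the equation twice; once $|\nabla u|$ is bounded above and below this is handled by standard linear elliptic regularity applied to the equation with Lipschitz coefficients, but it is the one nontrivial input beyond the algebraic manipulation.
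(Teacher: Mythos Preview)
Your argument is correct and follows essentially the same strategy as the paper: differentiate the equation in $x_k$, multiply by $u_{x_k}$, sum over $k$, use the identity $v\,v_{x_j}=\sum_k u_{x_k}u_{x_kx_j}$, and discard the nonnegative quadratic form $\sum_{i,j,k}a_{ij}u_{x_kx_i}u_{x_kx_j}$. The paper carries this out directly in weak form by testing against $\eta_{x_k}$ and then choosing $\eta=u_{x_k}\psi$, which sidesteps the need for the $C^{2,\alpha}$ regularity you invoke; since with $p$ merely Lipschitz the lower-order coefficient $p_{x_i}$ is only $L^\infty$, full Schauder may not give $C^{2,\alpha}$, but $W^{2,q}_{\rm loc}$ (from the uniformly elliptic non-divergence equation with H\"older principal part and bounded drift) is enough for either your pointwise-a.e.\ computation or the weak-form version you mention as an alternative.
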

\begin{proof}
Let $\eta\in C_0^{\infty}(\Omega)$. Then, for each $k$ we have after  integration by parts,
$$0=\int_{\Omega} A(x,\nabla u) \nabla \eta_{x_k}\, dx=-
\int_{\Omega} \frac{\partial A}{\partial x_k }(x,\nabla u) \nabla \eta\, dx-
\int_{\Omega} a_{ij}(x,\nabla u) u_{x_j x_k} \eta_{x_i}\, dx.
$$
Observe that, by approximation, we get that the right hand side vanishes for $\eta\in W^{1,p(\cdot)}(\Omega)$.

Taking $\eta=u_{x_k} \psi$ with $\psi\in C_0^{\infty}(\Omega)$ we have, by using the ellipticity of $a_{ij}$ (see Remark \ref{nodiver}),
\begin{align*}
0=&-\int_{\Omega} \frac{\partial A}{\partial x_k }(x,\nabla u) u_{x_k} \nabla \psi\, dx-
\int_{\Omega} \frac{\partial A}{\partial x_k }(x,\nabla u) \nabla u_{x_k} \psi\, dx
\\&-\int_{\Omega} a_{ij}(x,\nabla u) u_{x_j x_k} u_{x_k x_i}\psi\, dx
-
\int_{\Omega} a_{ij}(x,\nabla u) u_{x_j x_k} u_{x_k}\psi_{x_i}\, dx
\\
\leq &-\int_{\Omega} \frac{\partial A}{\partial x_k }(x,\nabla u) u_{x_k} \nabla \psi\, dx-
\int_{\Omega} \frac{\partial A}{\partial x_k }(x,\nabla u) \nabla u_{x_k} \psi\, dx
-
\int_{\Omega} a_{ij}(x,\nabla u) u_{x_j x_k} u_{x_k}\psi_{x_i}\, dx.
\end{align*}

Observe that $\di v_{x_j}=\frac{\nabla u}{|\nabla u|} \nabla u_{x_j} =
\frac{ u_{x_k}}{|\nabla u|} u_{x_k x_j}$.
Taking the sum over $k$ in the last inequality, using Remark \ref{nodiver} and replacing by $v$, we have
\begin{align*}
-\int_{\Omega} &a_{ij}(x,\nabla u) |\nabla u| v_{x_j} \psi_{x_i}\, dx\geq
\int_{\Omega} \frac{\partial A}{\partial x_k }(x,\nabla u) u_{x_k} \nabla \psi\, dx+
\int_{\Omega} \frac{\partial A}{\partial x_k }(x,\nabla u) \nabla u_{x_k} \psi\, dx\\
&=
\int_{\Omega} |\nabla u|^{p-2}\log|\nabla u| p_{x_k} u_{x_i} u_{x_k} \psi_{x_i}\, dx+
\int_{\Omega} |\nabla u|^{p-2}\log|\nabla u| p_{x_k} u_{x_i} u_{x_k x_i} \psi\, dx\\&=
\int_{\Omega} |\nabla u|^{p-2}\log|\nabla u| \langle \nabla u,\nabla p\rangle \langle \nabla u,\nabla \psi\rangle \, dx+
\int_{\Omega} |\nabla u|^{p-2}\log|\nabla u| |\nabla u| v_{x_k} p_{x_k} \psi\, dx
\\&=
\int_{\Omega} |\nabla u|^{p-2}\log|\nabla u| \langle \nabla u,\nabla p\rangle \langle \nabla u,\nabla \psi\rangle \, dx+
\int_{\Omega} |\nabla u|^{p-2}\log|\nabla u| |\nabla u| \langle \nabla p, \nabla v\rangle \psi\, dx.
\end{align*}

By our election of $D, B$ and $H$ we have,
\begin{align*}
-
\int_{\Omega} D_{ij}(x,\nabla u) & v_{x_j} \psi_{x_i}\, dx\geq
\int_{\Omega} H\nabla  \psi \, dx+
\int_{\Omega} B \nabla v\psi\, dx
 .
\end{align*}
\end{proof}

\def\cprime{$'$} \def\ocirc#1{\ifmmode\setbox0=\hbox{$#1$}\dimen0=\ht0
  \advance\dimen0 by1pt\rlap{\hbox to\wd0{\hss\raise\dimen0
  \hbox{\hskip.2em$\scriptscriptstyle\circ$}\hss}}#1\else {\accent"17 #1}\fi}
\providecommand{\bysame}{\leavevmode\hbox to3em{\hrulefill}\thinspace}
\providecommand{\MR}{\relax\ifhmode\unskip\space\fi MR }
\providecommand{\MRhref}[2]{%
  \href{http://www.ams.org/mathscinet-getitem?mr=#1}{#2}
}
\providecommand{\href}[2]{#2}

\end{document}